\documentclass[11pt]{amsart}
\usepackage{amssymb}
\usepackage{verbatim}
\usepackage{amsmath}
\usepackage{}
\headheight=8pt     \topmargin=0pt \textheight=624pt \textwidth=432pt \oddsidemargin=18pt
\evensidemargin=18pt
\usepackage{amssymb,latexsym}
\usepackage{enumerate}

\newcommand{\op} {\overline{\partial}}
\newcommand{\dbar}{\ensuremath{\overline\partial}}

\newcommand{\C}{\ensuremath{\mathbb{C}}}
\newcommand{\B}{\ensuremath{\mathbb{B}}}
\newcommand{\R}{\ensuremath{\mathbb{R}}}
\newcommand{\TL}{\textquotedblleft }
\newcommand{\TR}{\textquotedblright \ }

\makeatletter
\newcommand{\sumprime}{\if@display\sideset{}{'}\sum%
            \else\sum'\fi}
\makeatother

\begin{document}

\numberwithin{equation}{section}

\newtheorem{theorem}{Theorem}[section]
\newtheorem{proposition}[theorem]{Proposition}
\newtheorem{conjecture}[theorem]{Conjecture}
\def\theconjecture{\unskip}
\newtheorem{corollary}[theorem]{Corollary}
\newtheorem{lemma}[theorem]{Lemma}
\newtheorem{observation}[theorem]{Observation}
\newtheorem{definition}[theorem]{Definition}
\newtheorem{remark}{Remark}
\def\theremark{\unskip}
\newtheorem{kl}{Key Lemma}
\def\thekl{\unskip}
\newtheorem{question}{Question}
\def\thequestion{\unskip}
\newtheorem{example}{Example}
\def\theexample{\unskip}
\newtheorem{problem}{Problem}

\thanks{Research supported by Knut and Alice Wallenberg Foundation, and the China Postdoctoral Science Foundation.}

\address{School of Mathematical Sciences, Fudan University, Shanghai, 200433, China}
\address{Current address: Department of Mathematical Sciences, Chalmers University of Technology and
University of Gothenburg. SE-412 96 Gothenburg, Sweden}
\email{wangxu1113@gmail.com}
\email{xuwa@chalmers.se}

\title[Curvature Formula]{A curvature formula associated to a family of pseudoconvex domains}
 \author{Xu Wang}
\date{}
\maketitle

\begin{abstract} We shall give a definition of the curvature operator for a family of weighted Bergman spaces $\{\mathcal H_t\}$ associated to a smooth family of smoothly bounded strongly pseudoconvex domains $\{D_t\}$. In order to study the \TL boundary term\TR  in the curvature operator, we shall introduce the notion of geodesic curvature for the associated family of boundaries $\{\partial D_t\}$. As an application, we get a variation formula for the norms of Bergman projections of currents with compact support. A flatness criterion for $\{\mathcal H_t\}$ and its applications to triviality of fibrations are also given in this paper.
\bigskip

\noindent{{\sc Mathematics Subject Classification} (2010): 32A25, 32L25, 32G05.}

\smallskip

\noindent{{\sc Keywords}: Brunn-Minkowski theory, Prekopa theorem, interpolation family, geodesic curvature, horizontal lift, curvature of Hilbert bundles, pseudoconvex domain, $\op$-equation, H\"ormander $L^2$-theory, Bergman kernel, complete K\"ahler metric.}
\end{abstract}

\tableofcontents

\section{Introduction}

In 2005, Berndtsson \cite{Bern06} found that the functional version of the classical Brunn-Minkowski inequality, i.e. the Prekopa theorem \cite{Prekopa73}, can be seen as a special case of the subharmonicity property of the Bergman kernel (see \cite{Maitani84} and \cite{MY04} for early results with different point of view). It opens another door (so called \emph{complex Brunn-Minkowski theory}) of studying complex geometry by using the Brunn-Minkowski theory in convex geometry. Below, we shall give a short account of the complex Brunn-Minkowski theory, and a simple example to show our motivation to write this paper.

Heuristically speaking, the Prekopa theorem can be seen as a version of inverse H\"older inequality. In \cite{Bern13a}, Berndtsson gave another form of the classical H\"older inequality:

\medskip

\textbf{Theorem A} [H\"older inequality]: Let $\phi(t, x)$ be convex in $t$. Then
\begin{equation}
t \mapsto \log\int_{\mathbb R^n} e^{\phi(t,x)} dx, \ dx=dx^1\wedge \cdots \wedge dx^n,
\end{equation}
is convex (if the integral is convergent).

\medskip

The proof follows by differentiating with respect to $t$: 
\begin{equation}
\left(\log\int e^{\phi}  \right)_{tt}=\left(\int  e^{\phi} \right)^{-2}  \left(\int e^{\phi}  \int (\phi_{tt}e^{\phi} +\phi_t^2 e^\phi)-\big(\int \phi_t e^\phi\big)^2 \right).
\end{equation}
Notice that, by the Cauchy-Schwarz inequality, we have
\begin{equation}
\big(\int \phi_t e^\phi\big)^2 \leq \int e^{\phi}  \int \phi_t^2 e^\phi.
\end{equation}
Thus $\phi_{tt} \geq 0$ implies that $\left(\log\int e^{\phi}  \right)_{tt}\geq 0$. 

\medskip

The following result is due to Prekopa:

\medskip

\textbf{Theorem B} [Prekopa theorem]: Let $\phi(t, x)$ be convex in $t$ and $x$. Then
\begin{equation}
t \mapsto -\log\int_{\mathbb R^n} e^{-\phi(t,x)} dx,
\end{equation}
is convex.

\medskip

There are many ways to prove Theorem B. A famous observation of Brascamp-Lieb (see \cite{BL76}) is:  one may use a weighted $L^2$-estimates of the $d$-operator to prove Theorem B. 

In \cite{Bern98}, Berndtsson showed that one may also use the H\"ormander's weighted $L^2$-estimates of the $\dbar$-operator (see \cite{Hormander65}) to prove Theorem B. Moreover, in \cite{Bern06}, he established the following complex version of Theorem B:

\medskip

\textbf{Theorem C} [Berndtsson's theorem]: Let $\phi(t, z)$ be a plurisubharmonic function on a pseudoconvex domain  $D \subset \C^m_t\times \C^n_z$. Then
\begin{equation}
(t,z) \mapsto \log K^t(z,z),
\end{equation}
is plurisubharmonic or equal to $-\infty$ identically on $D$, where each $K^t$ denotes the weighted Bergman kernel associated to the fibre $D_t:=D\cap( \{t\}\times \C^n)$ and the weight $\phi^t:=\phi|_{D_t}$.

\medskip

In \cite{Bern06}, Berndtsson gave two proofs of Theorem C. A crucial step in his first proof is also the H\"ormander's $L^2$-estimates of the $\dbar$-operator. Later in \cite{Bern09}, he pointed out that it will be more natural to look at Theorem C as a curvature property of the direct image bundle (see Theorem 1.1 and Theorem 1.2 in \cite{Bern09}). This is a milestone in the complex Brunn-Minkowski theory (see \cite{Bern13b}).  

The complex Brunn-Minkowski theory has proved to be very useful in several complex variables and complex geometry (see \cite{Bern09a}, \cite{Bern13}, \cite{Bern14}, \cite{BL14}, \cite{BB14}, \cite{BernPaun08} and references therein). This paper is an attempt to study the curvature formula of the direct image bundle associated to general Stein-fibrations (see \cite{Tsuji05}, \cite{Sch12}, \cite{LiuYang13}, \cite{GS15}, \cite{MT07} and \cite{MT08} for other generalizations and related results). The new results are the \emph{boundary term} of the curvature formula and its relation with \emph{interpolation family} of convex bodies.  

Let us start by looking at an almost trivial case of Theorem B. Let 
\begin{eqnarray}
\mathcal F:= \{[a(t),b(t)]\}_{0\leq t\leq 1},
\end{eqnarray}
be a family of line segments. Let
\begin{equation*}
    D:=\{(t,x)\in \mathbb R^2: a(t)<x<b(t), \ 0<t<1\},
\end{equation*}
be the total space. Assume that $b(t)> a(t)$ for each $0\leq t\leq 1$ and $a, b$ are smooth on a neighborhood of $[0,1]$. Put
\begin{equation*}
    \theta(a)=\frac{d^2a}{dt^2},\ \theta(b)=-\frac{d^2b}{dt^2}.
\end{equation*}
Let us introduce the following definitions:

\begin{definition} We call $\theta$ the geodesic curvature of $\mathcal F$.  
\end{definition}

\begin{definition} 
We call $\mathcal F$ an interpolation family if $\theta\equiv 0$.
\end{definition}

\textbf{Remark:}  $\mathcal F$ is an interpolation family if and only if both $a$ and $b$ are affine functions. 

\begin{definition}  We call $\mathcal F$ a trivial family if there exists a real constant $c$ such that for every $0< t<1$, $[a(t),b(t)]=[a(0),b(0)]+ct$. 
\end{definition}

Put 
\begin{equation}
\phi(t,x)=0, \ \text{on} \ D, \  \ \phi(t,x)=\infty,\ \text{on} \ \mathbb R^2 \backslash D,
\end{equation}
then convexity of $D$ is equivalent to convexity of $\phi$.  Thus Theorem B implies that if $D$ is convex then
\begin{equation}
\Phi:  t \mapsto -\log (b(t)-a(t)) =-\log\int_{\mathbb R} e^{-\phi(t,x)} dx 
\end{equation}
is convex on $(0,1)$. Moreover, by direct computation,
\begin{equation}
\ddot \Phi=\frac{(b-a)(\ddot a-\ddot b)+(\dot a-\dot b)^2}{(b-a)^2},   \ \ddot \Phi:=\frac{d^2\Phi}{dt^2}, \ \dot a:=\frac{da}{dt}.
\end{equation}
We call 
\begin{equation}
Geo:=\frac{(b-a)(\ddot a-\ddot b)}{(b-a)^2}=\frac{\theta(a)+\theta(b)}{b-a},
\end{equation}
the geodesic term in $\ddot \Phi$ and
\begin{equation}
R:=\frac{(\dot a-\dot b)^2}{(b-a)^2}
\end{equation}
the remaining term in $\ddot \Phi$. Thus we have:

\begin{proposition}\label{pr:curvature-real} The remaining term in $\ddot\Phi$ is always non-negative. Moreover, if the total space $D$ is convex then the geodesic term in $\ddot\Phi$ is also non-negative.
\end{proposition}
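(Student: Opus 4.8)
The plan is to dispatch the two assertions separately; both are elementary.

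\emph{The remaining term} requires no work beyond reading off its definition: $R=(\dot a-\dot b)^2/(b-a)^2$ is the quotient of the perfect square $(\dot a-\dot b)^2\ge 0$ by $(b-a)^2$, and the latter is strictly positive throughout $[0,1]$ since we assumed $b(t)>a(t)$ there. Hence $R\ge 0$ on $(0,1)$, with no hypothesis on $D$ whatsoever. Structurally this is the Cauchy--Schwarz step appearing in the proofs of Theorem A and Theorem B, here specialised to the degenerate weight $e^{-\phi}$, so it reduces to a tautology.

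\emph{The geodesic term.} Since $Geo=(\theta(a)+\theta(b))/(b-a)$ with $b-a>0$, it suffices to show that convexity of the slab $D=\{(t,x):a(t)<x<b(t),\ 0<t<1\}$ forces $\theta(a)=\ddot a\ge 0$ and $\theta(b)=-\ddot b\ge 0$ on $(0,1)$; equivalently, that the lower boundary function $a$ is convex and the upper boundary function $b$ is concave. I would prove this by contraposition for $a$, the case of $b$ being entirely symmetric. If $\ddot a(t_0)<0$ at some interior point $t_0$, choose $c$ with $0<c<|\ddot a(t_0)|$. Taylor's formula at $t_0$ shows that for every sufficiently small $\delta>0$ the two points $\bigl(t_0\pm\delta,\ a(t_0)\pm\dot a(t_0)\delta-\tfrac12 c\delta^2\bigr)$ lie in $D$: the constraint $x>a(t)$ holds because $c<|\ddot a(t_0)|$, and $x<b(t)$ holds by continuity, these points being close to $(t_0,a(t_0))$ with $a(t_0)<b(t_0)$. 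Their midpoint $\bigl(t_0,\ a(t_0)-\tfrac12 c\delta^2\bigr)$ however fails $x>a(t_0)$, so it is not in $D$, contradicting convexity. Since $a$ and $b$ are smooth, we conclude $\ddot a\ge 0$ and $\ddot b\le 0$ on all of $(0,1)$, whence $\theta(a),\theta(b)\ge 0$ and $Geo\ge 0$.

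Taken together the two parts re-establish, in this toy case, the convexity of $\Phi$ supplied by Theorem B, as $\ddot\Phi=Geo+R$. I do not anticipate any genuine difficulty: the sole point meriting care is the implication \emph{$D$ convex $\Rightarrow$ boundary functions convex/concave}, and even that can instead be obtained abstractly by writing $D$ as the intersection of the three convex sets $\{x>a(t)\}$, $\{x<b(t)\}$ and $\{0<t<1\}$ and invoking the standard characterisation of when an epigraph- or subgraph-type region is convex, together with the elementary fact that a continuous midpoint-convex function is convex.
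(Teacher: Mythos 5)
Your proof is correct and follows exactly the argument the paper leaves implicit: the remaining term is a square divided by $(b-a)^2>0$, and convexity of $D$ forces $a$ convex and $b$ concave, so $\theta(a)=\ddot a\ge 0$, $\theta(b)=-\ddot b\ge 0$ and hence $Geo\ge 0$. The only difference is that you spell out, via the Taylor/contraposition step (or the midpoint-convexity alternative you mention), the elementary fact that the paper treats as immediate after its direct computation of $\ddot\Phi$.
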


\begin{proposition}\label{pr:trivial-real} Assume that the total space $D$ is convex then affine-ness of $\Phi$ is equivalent to triviality of $\mathcal F$. 
\end{proposition}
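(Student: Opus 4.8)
The plan is to prove the two implications separately. The direction ``trivial $\Rightarrow$ $\Phi$ affine'' is immediate: if $a(t)=a(0)+ct$ and $b(t)=b(0)+ct$, then $b(t)-a(t)=b(0)-a(0)$ is independent of $t$, so $\Phi(t)=-\log(b(0)-a(0))$ is constant, hence affine. The content is in the converse, where the hypothesis that $D$ is convex will be used twice: once through Proposition \ref{pr:curvature-real}, and once to force the individual boundary functions to be convex/concave.

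For the converse, assume $D$ is convex and $\Phi$ is affine on $(0,1)$. Since $a,b$ are smooth on a neighborhood of $[0,1]$ and $b>a$ there, $\Phi$ is smooth, and affine-ness means $\ddot\Phi\equiv 0$ on $(0,1)$. Decomposing $\ddot\Phi=Geo+R$ as in the text, Proposition \ref{pr:curvature-real} gives $Geo\geq 0$ (here convexity of $D$ is used) and $R\geq 0$ throughout $(0,1)$; since their sum vanishes, $Geo\equiv 0$ and $R\equiv 0$ on $(0,1)$. From $R=(\dot a-\dot b)^2/(b-a)^2\equiv 0$ we get $\dot a\equiv\dot b$ on $(0,1)$, hence on $[0,1]$ by continuity, so $b-a$ is constant. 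From $Geo=(\theta(a)+\theta(b))/(b-a)\equiv 0$ and $b-a>0$ we get $\theta(a)+\theta(b)\equiv 0$. Now convexity of $D$ forces $a$ to be convex and $b$ to be concave: indeed $\overline D=\{(t,x):a(t)\leq x\leq b(t),\ 0\leq t\leq 1\}$ is convex and contains the graphs of $a$ and $b$, so testing convexity of $\overline D$ along the segment joining $(t_1,a(t_1))$ and $(t_2,a(t_2))$ yields $a(\lambda t_1+(1-\lambda)t_2)\leq \lambda a(t_1)+(1-\lambda)a(t_2)$, and similarly for $b$ with the inequality reversed. Hence $\theta(a)=\ddot a\geq 0$ and $\theta(b)=-\ddot b\geq 0$, so $\theta(a)\equiv\theta(b)\equiv 0$, i.e. $\ddot a\equiv\ddot b\equiv 0$ and both $a,b$ are affine. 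Since $a$ and $b$ are affine with the same derivative, setting $c:=\dot a=\dot b$ gives $a(t)=a(0)+ct$ and $b(t)=b(0)+ct$, which is exactly triviality of $\mathcal F$.

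The whole argument is elementary, and I do not expect a genuine obstacle; the one step that requires a little attention is the passage from $Geo\equiv 0$ to $\theta(a)\equiv\theta(b)\equiv 0$, which needs the separate non-negativity of $\theta(a)$ and $\theta(b)$ rather than just of their sum. That non-negativity is precisely where the convexity of the total space $D$ enters geometrically (equivalently, it is already visible in the proof of Proposition \ref{pr:curvature-real}), and it is the finite-dimensional toy version of the boundary term that the rest of the paper is devoted to.
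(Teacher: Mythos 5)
Your proof is correct and follows exactly the route the paper intends (and uses for the complex analogue, Theorem \ref{th:flat}): decompose $\ddot\Phi$ into the geodesic term and the remaining term, use Proposition \ref{pr:curvature-real} to conclude both vanish when $\Phi$ is affine, and then use convexity of $D$ (i.e.\ convexity of $a$ and concavity of $b$) to split $\theta(a)+\theta(b)\equiv 0$ into $\theta(a)\equiv\theta(b)\equiv 0$. The paper leaves this elementary verification to the reader, and your handling of the one delicate point --- that the vanishing of the sum forces each of $\theta(a),\theta(b)$ to vanish separately --- is exactly where the hypothesis on the total space is needed.
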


In this paper, we shall study the counterparts of the above notions in complex geometry. In the next secion, we shall define the notion of geodesic curvature (see Definition \ref{de:geodesic-c-bdy}) for a smooth family of smoothly bounded Stein domains (see Definition \ref{de:smoothfamily}). Then Definition \ref{de:inter-psc}, \emph{interpolation family of Stein domains}, can be seen as a generalization of Definition 1.2; and Definition \ref{de:trivial}, \emph{trivial family of Stein domains}, can be seen as a generalization of Definition 1.3.

Our main result, Theorem \ref{th:CF}, is a curvature formula associated to variation of Stein manifolds. Let $\{D_t\}$ be a smooth family of smoothly bounded $n$-dimensional Stein domains. Let $\mathcal L$ be a holomorphic line bundle on the total space $D$. Let $h$ be a smooth Hermitian metric on $\mathcal L$. We shall consider the associated family of Bergman spaces
\begin{equation*}
\mathcal H:=\{\mathcal H_t\},
\end{equation*}
where each $\mathcal H_t$ is the space of $L^2$-holomorphic $\mathcal L|_{D_t}$-valued $(n,0)$-forms on $D_t$. Then Theorem \ref{th:CF} reads that:
 
\medskip

\textbf{Main Theorem}: \emph{Assume that $\mathcal L$ is flat or relatively ample. Then the curvature of $\mathcal H$ contains two terms: the geodeisc term and the remaining term. The remaining term is always semi-positive in the sense of Nakano. Moreover, if the total space is pseudoconvex and $\mathcal L$ is semi-positive on the total space then the geodesic term is also semi-positive in the sense of Nakano.}

\medskip

Thus our main result can be seen as a generalization of Proposition \ref{pr:curvature-real}. In section 2.5, we shall give a definition of the holomorphic section of the \textbf{dual} of $\mathcal H$ (see Definition \ref{de:holomorphic-dual-new}).  
Then our main application, Corollary \ref{co:dual-psh}, can be stated as follows:

\medskip

\textbf{Application}: \emph{If the total space is pseudoconvex and $\mathcal L$ is non-negative on the total space then $\log||f||$ is plurisubharmonic for every holomorphic section $f$ of the dual of $\mathcal H$.}

\medskip

Corollary \ref{co:dual-psh} can be seen as a generalization of Theorem C (see  the remark behind Theorem 1.1 in \cite{Bern09}). In section 5.2, we shall also use Corollary \ref{co:dual-psh} to study variation of the Bergman projection of currents with compact support. In particular, we shall give a variation formula for the derivatives of the Bergman kernel (see Theorem \ref{th:VF}).

In section 6, we shall discuss the counterparts of Proposition \ref{pr:trivial-real} in complex case. We shall show that under some assumptions (see Theorem \ref{th:flat}), flatness of $\mathcal H$ and triviality of $D$ are equivalent. As a direct corollary, we shall give a triviality criterion for a class of holomorphic motions (see Corollary \ref{co:last}) of planar domains. 

\medskip

\textbf{Acknowledgement}: I would like to thank Bo-Yong Chen for introducing me this topic, Bo Berndtsson for many inspiring discussions relating the curvature of the direct images, his  useful comments on this paper and suggestions on paper writing. Thanks are also given to Qing-Chun Ji for his constant support and encouragement.

I would also like to thank Laszlo Lempert for pointing out a mistake in the first version of this paper regarding the assumptions of Lemma \ref{le:key-lemma}. After the first version of this manuscript was completed, Laszlo Lempert kindly sent me a preprint \cite{Tran} of Dat Vu Tran. In \cite{Tran}, Dat Vu Tran gave a careful study of curvature of Hilbert fields (see also \cite{LS14}) and showed that the curvature operator associated to a famlily of planar domains with pseudoconvex global space is semi-positive, which covers our main theorem in case each fibre is one-dimensional.

Last but not least, thanks are due to the referee for pointing out several inaccuracies in this paper and hopefully making this paper more readable.

\section{Basic definitions and results}

\subsection{List of notations} \ \

\medskip

1. $\pi: \mathcal X \to \mathbb B$ is a holomorphic submersion.

2. $D$ is an open subset in $\mathcal X$, $D_t:=D\cap \pi^{-1}(t).$

3. $\mathcal L$ is a holomorphic line bundle over $\mathcal X$ and $L_t:=\mathcal L|_{D_t}$.

4. $\mathcal H_t$ is the space of $L^2$ holomorphic $L_t$-valued $(n,0)$-forms on $D_t$.

5. $\mathcal H:=\{\mathcal H_t\}_{t\in \mathbb B}$.

6. $i_t$: the inclusion mapping $D_t\hookrightarrow D$.

7. $t$: coordinate system on $\mathbb B$, $t^j$ components of $t$, $\partial_{t^j}=\partial/\partial t^j$, $\dbar_{t^k}=\partial/\partial \bar t^k$.

8. $\sum d\bar t^j\otimes \dbar_{t^j}$ is the $\dbar$-operator on $\mathcal H$.

9. $\sum dt^j\otimes D_{t^j}$ is the $(1,0)$-part of the Chern connection on $\mathcal H$.  

10. $\Theta_{j\bar k}:=[D_{t^j}, \dbar_{t^k}]$: curvature operators on $\mathcal H$.

11. $\theta_{j\bar k}(\rho)$: geodesic curvature of $\{\partial D_t\}$.

12. $\eta, \zeta, \mu$: local coordinate system on a fixed fibre $D_t$ of $D$, $\mu^\lambda$: components of $\mu$.

13. $e^{-\phi}$: local representative of a Hermitian metric $h$ on a line bundle $\mathcal L$.

14. $\phi_j:=\partial\phi/\partial t^j$, $\phi_{j\lambda}:=\partial^2\phi/\partial t^j\partial \mu^{\lambda}$, $\phi_{\lambda\bar\nu}:=\partial^2\phi/\partial \mu^\lambda\partial \bar\mu^{\nu}$.

15. $(\rho^{\bar\lambda\nu})$, $(\phi^{\bar\lambda\nu})$: inverse matrix of $(\rho_{\lambda\bar\nu})$, $(\phi_{\lambda\bar\nu})$ respectively.

16. $\delta_{V}:=V~\lrcorner ~$ means contraction of a form with a vector field $V$;

17. $\alpha,\beta\in \mathbb N^n$, $|\alpha|:=\alpha_1+\cdots+\alpha_n$, $f_\alpha:=\partial^{|\alpha|}f/(\partial\mu^1)^{\alpha_1}\cdots
(\partial\mu^n)^{\alpha_n}$.

\subsection{Set up}

Let $\pi:\mathcal X\to \mathbb B$ be a holomorphic submersion from an $(n+m)$-dimensional complex manifold $\mathcal X$ to the unit ball $\mathbb B$ in $\mathbb C^m$. Let $D$ be an open subset of $\mathcal X$. Put
\begin{equation*}
    D_t=D\cap \pi^{-1}(t).
\end{equation*}
The following assumption will be used throughout this paper.

\medskip

\textbf{A1:}  \emph{The restriction of $\pi$ to the closure of D (with respect to the topology structure of the total space $\mathcal X$) is \textbf{proper},  and $D_t$ is non-empty for every $t$ in $\mathbb B$.}

\medskip

Let $\mathcal L$ be a holomorphic line bundle over $\mathcal X$. Put
\begin{equation*}
 L_t:=\mathcal L|_{D_t}.
\end{equation*}
We shall consider the following family of vector spaces associated to $\{D_t\}_{t\in\mathbb B}$:
\begin{equation*}
    \mathcal H_t:=\{f\in H^0(D_t, K_{D_t}+L_t): \int_{D_t}i^{n^2}\{f,f\} <\infty \},
\end{equation*}
where $\{\cdot, \cdot\}$ is the canonical sesquilinear pairing (see page 268 in \cite{Demailly12}) with respect to $h$. If we fix a local holomorphic frame, say $e$, of $\mathcal L$, and write $h(e,e)=e^{-\phi}$, then
\begin{equation*}
\{f(z)\otimes e,f(z)\otimes e\}=e^{-\phi(z)} f(z)\wedge \overline{f(z)}.
\end{equation*}
Put
\begin{equation*}
    \mathcal H=\{\mathcal H_t\}_{t\in\mathbb B}.
\end{equation*}

\medskip

\textbf{Remark:} We know that each fibre $\mathcal H_t$ is an infinite dimensional Hilbert space, moreover, the element in $\mathcal H_t$ may not be smooth up to the boundary. Thus it is not easy to give a good definition of the curvature operator on $\mathcal H$ (see \cite{LS14} for a careful study of this subject). In order to make things easier,  \emph{we shall only define the curvature operator on sections that are smooth up to the boundary.} 

\medskip

Let $i_t$ be the inclusion mapping
\begin{equation}
i_t: D_t\hookrightarrow D.
\end{equation}
We shall introduce the following definition:

\begin{definition}\label{de:smooth} We call $u: t\mapsto u^t \in \mathcal H_t$
a smooth section of $\mathcal H$ if there exists an $\mathcal L$-valued $(n,0)$-form, say $\mathbf u$, such that 
\begin{equation}
i_t^* \mathbf u=u^t, \ \forall\ t\in\mathbb B,
\end{equation}
and $\mathbf u$ is \textbf{smooth up to the boundary} of $D$. We shall denote by $\Gamma(\mathcal H)$ the space of smooth sections of $\mathcal H$. 
\end{definition}

\textbf{Remark}: One may choose $\mathbf u$ in the above definition such that $\mathbf u$ is smooth \textbf{on the total space $\mathcal X$}. 

\medskip

In order to give a precise definition of the $\dbar$-operator on $\mathcal H$, we shall introduce the following definition (see \cite{Bern11}, see also Page 46 in \cite{KSp} or \cite{Wang13} for the admissible coordinate method):

\begin{definition}\label{de:re} We call a smooth $\mathcal L$-valued $(n,0)$-form $\textbf{u}$ on $\mathcal X$ a \textbf{representative} of $u\in\Gamma (\mathcal H)$ if $i_t^* (\textbf{u})=u^t$ for all $t\in\mathbb B$.
\end{definition}

\medskip

\textbf{$\dbar$-operator on $\mathcal H$:} Let $\mathbf u$ be a representative of $u\in\Gamma(\mathcal H)$. Then one may write
\begin{equation}
\dbar \mathbf u=\sum dt^j\wedge \eta_j +d\bar t^j\wedge \nu_j. 
\end{equation}
Since $\mathbf u\wedge dt$, $dt:=dt^1\wedge \cdots \wedge dt^m$, does not depend on the choice of $\mathbf u$ and
\begin{equation}
\dbar (\mathbf u\wedge dt)= \sum d\bar t^j\wedge \nu_j \wedge dt,
\end{equation}
we know that each $i_t^*\nu_j$ does not depend on the choice of $\nu_j$. Let us define
\begin{equation}\label{eq:def-dbar25}
\dbar_{t^j} u: t\mapsto  i_t^*\nu_j.
\end{equation}
Then the $\dbar$-operator on $\mathcal H$ can be defined as
\begin{equation*}
\dbar u:=\sum d\bar t^j\otimes \dbar_{t^j} u.
\end{equation*} 
From the definition, we know that $\dbar u\equiv 0$ on $\mathbb B$ if and only if   $\mathbf u \wedge dt$ is holomorphic on $D$. We shall introduce the following definition:

\begin{definition}\label{de: holo-section-H} Let $u$ be a smooth section of $\mathcal H$. We call $u$ a holomorphic section of $\mathcal H$ if $\mathbf u\wedge dt$ is holomorphic on $D$.
\end{definition}

\textbf{Chern connection on $\mathcal H$:} We shall write the $(1,0)$-part of the Chern connection on $\mathcal H$ as $\sum dt^j\otimes D_{t^j}$. By definition, each $D_{t^j}$ should satisfy
\begin{equation}\label{eq:chern-connection}
    \partial_{t^j}(u,v)=(D_{t^j} u, v) + (u, \dbar_{t^j} v), \ \forall\ u, v\in \Gamma(\mathcal H),
\end{equation}
where $\partial_{t^j}=\partial/\partial t^j$ and $(\cdot,\cdot)$ denotes the inner product on $\mathcal H_t$.

\begin{definition}\label{de:cc} We say that the Chern connection is well defined on $\mathcal H$ if for every $1\leq j\leq m$, there exists a $\C$-linear operator $D_{t^j} : \Gamma (\mathcal H) \to \Gamma (\mathcal H)$ such that \eqref{eq:chern-connection} is true.
\end{definition}

\textbf{Remark:} By using Hamilton's theorem (see \cite{Hamilton79}), in section 4 (see Proposition \ref{pr:chern}), we shall prove that the Chern connection is well defined on $\mathcal H$ if $D$ satisfies $\mathbf{A1}$ and the following assumption:

\medskip

\textbf{A2:}  \emph{There is a smooth real valued function $\rho$ on $\mathcal X$ such that for each $t\in\mathbb B$, $\rho|_{\pi^{-1}(t)}$ is strictly plurisubharmonic in a neighborhood of the closure $($with respect to the topology on $\pi^{-1}(t))$ of $D_t$. Moreover, $D_t=\{\rho <0\}\cap\pi^{-1}(t)$ and the gradient of  $\rho|_{\pi^{-1}(t)}$ has no zero point on $\partial D_t$. }

\medskip

\textbf{Remark:} In section 3.2, we shall prove that $\mathbf A1$ and $\mathbf A2$ together implies that every smooth vector field on the base $\mathbb B$ has a smooth lift on $\mathcal X$ that tangent to the boundary of $D$. Thus in this case, $\{D_t\}$ is locally trivial as a smooth family.

\begin{definition}\label{de:smoothfamily} $\{D_t\}$ is said to be a smooth family of smoothly bounded Stein domains if $D$ satisfies $\mathbf{A1}$ and $\mathbf{A2}$.
\end{definition}

Assume that $D$ satisfies $\mathbf{A1}$ and $\mathbf{A2}$. Then we can define the curvature operators on $\mathcal H$ as follows:
\begin{equation}\label{eq:new-1}
    \Theta_{j\bar k}u:=[D_{t^j}, \dbar_{t^k}]u=D_{t^j}\dbar_{t^k}u-\dbar_{t^k} D_{t^j}u, \ \forall \ u\in \Gamma(\mathcal H).
\end{equation}

\subsection{Previous results}

We shall give a short account of Berndtsson's results on\ \textquotedblleft geodesic\textquotedblright\ formula for $\Theta_{j\bar k}$. Let us recall the following notions in his curvature formula.  

\medskip

\textbf{Geodesic curvature in the space of K\"ahler metrics}: Let us denote by $\Theta(\mathcal L,h)$ the curvature of $(\mathcal L,h)$. If we write $h$ locally as $e^{-\phi}$ then we have
\begin{equation}
\Theta(\mathcal L,h) = \partial\dbar \phi.
\end{equation} 
If $D$ is a product, say $D=D_0 \times\mathbb B$, and 
\begin{equation}
i\partial\dbar \phi|_{D_0\times \{t\}}  >0, \ \forall\ t\in\mathbb B,
\end{equation}
then $\{i\partial\dbar \phi|_{D_0\times \{t\}} \}$ can be seen as a family of K\"ahler metrics on $D_0$. Assume further that $m=1$. Then there exists a smooth function, say $c(\phi)$, such that
\begin{equation}
\frac{(i\partial\dbar \phi)^{n+1}}{(n+1)!}= c(\phi)\frac{(i\partial\dbar \phi)^{n}}{n!} \wedge idt\wedge d\bar t.
\end{equation}
By Proposition 3 in \cite{Donaldson99}, if $\{i\partial\dbar \phi|_{D_0\times \{t\}} \}$ is $S^1$ invariant then

\medskip

\emph{The path $\{i\partial\dbar \phi|_{D_0\times \{t\}} \}$ defines a geodesic in the space of K\"ahler metrics on $D_0$ if and only if $c(\phi)\equiv 0$.}

\medskip

In general, $c(\phi)$ is called the \emph{geodesic curvature} in the space of K\"ahler metrics. The geodesic curvature plays a crucial role on variation of K\"ahler metrics on projective manifolds; see \cite{Mabuchi86}, \cite{Semmes88} and \cite{Donaldson99}, to cite just a few.  Another way to look at the geodesic curvature is to use the notion of \emph{horizontal lift}.

\medskip

\textbf{Horizontal lift}: The notion of horizontal lift is introduced by Siu in \cite{Siu86}. In \cite{Bern11}, Berndtsson found that one may also define the notion of horizontal lift with respect to a relative K\"ahler form (a smooth $d$-closed $(1,1)$-form that is positive on each fibre). Let us recall his definition:

\medskip

\emph{Let $\omega$ be a relative K\"ahler form on $\mathcal X$. A $(1,0)$-vector field $V$ on $\mathcal X$ is said to be  horizontal with respect to $\omega$ if  $\langle V, W\rangle_{\omega}=0$ for every $(1,0)$-vector field $W$ such that $\pi_*(W)=0$. Let $v$ be a vector field on $\mathbb B$. We call $V$ a \emph{horizontal lift} of $v$ if $V$ is horizontal with respect to $\omega$ and $\pi_*V$=$v$.}

\medskip

By Berndtsson's formula (see page 3 in \cite{Bern11}), if we write $\omega=i\partial\dbar\phi$ locally then for each $1\leq j\leq m$, $\partial/\partial t^j$ has a unique horizontal lift, say $V_j$, as follows:
\begin{equation}
V_j=\partial/\partial t^j-\sum \phi_{j\bar \nu}\phi^{\bar \nu \lambda} \partial/\partial \mu^\lambda.
\end{equation} 
Moreover, if $m=1$ then
\begin{equation}
\langle V_1, V_1\rangle_{i\partial\dbar\phi}=c(\phi).
\end{equation}
By this formula, it is natural to define the notion of geodesic curvature for general base dimension $m$ and general fibration $\pi$. 

\medskip

\textbf{Geodesic curvature of $\{h|_{L_t}\}$}: Let us assume that
\begin{equation*}
    i\Theta(\mathcal L,h)|_{D_t}>0, \ \forall \ t\in\mathbb B, \ \text{or} \ \Theta(\mathcal L,h)\equiv 0 \ \text{on}\ D,
\end{equation*}
In case $i\Theta(\mathcal L,h)|_{D_t}> 0, \ \forall \ t\in\mathbb B$,  let $V_j^h$ be the horizontal lift of the base vector fields $\partial/\partial t^j$ with respect to $i\Theta(\mathcal L,h)$. We shall define the geodesic curvature of $\{h|_{L_t}\}$ as:
\begin{equation}\label{eq:cjkh}
    c_{j\bar k}(h):=\langle V_j^h, V_k^h\rangle_{i\Theta(\mathcal L,h)}; \ \text{and} \ c_{j\bar k}(h):=0 \ \text{if} \ \Theta(\mathcal L,h)\equiv 0 \ \text{on} \ D. 
\end{equation}
Another notion in Berndtsson's curvature formula is the following:

\medskip

\textbf{Remaining term in H\"ormander's $L^2$-estimate (product case)}: Assume that $D=D_0\times \mathbb B$. Then the vector fields $\partial/\partial t^j$ are well defined on $\mathcal X$. Fix  \begin{equation}\label{eq:new-2}
       u_j\in\Gamma(\mathcal H), \ 1\leq j\leq m, \ \ \text{(see Definition \ref{de:smooth})}.
\end{equation}
Let $a$ be the $L^2$-minimal solution of 
\begin{equation*}
    \dbar^t(\cdot)=c,
\end{equation*}
where $\dbar^t$ denotes the Cauchy-Riemann operator on $D_t=D_0\times \{t\}$ and
\begin{equation}
c:=\sum ( \partial/\partial t^j \lrcorner ~\Theta(\mathcal L,h))|_{D_t} \wedge u_j=\sum\dbar^t\phi_{j}\wedge u_j.
\end{equation} 
Then we have the following remaining term in H\"ormander's $L^2$-estimate
\begin{equation*}
    R:=||c||^2_{i\Theta(\mathcal L,h)|_{D_t}} -||a||^2.
\end{equation*}
By H\"ormander's theorem (see \cite{Hormander65}), if $D_0$ is pseudoconvex then $R$ is non-negative. Thus in our case, $R$ is always non-negative. Now we can state the following theorem of Berndtsson (see \cite{Bern06} and Theorem 1.1 in \cite{Bern09}):

\begin{theorem}\label{th:CF-06} Assume that $D=D_0\times \mathbb B$, where $D_0$ is a strongly pseudoconvex domain with smooth boundary. If $i\Theta(\mathcal L,h)|_{D_0\times \{t\}}>0, \ \forall \ t\in\mathbb B$, then we have
\begin{equation}\label{eq:final-CF-06}
    \sum(\Theta_{j\bar k} u_j, u_k)= \sum(c_{j\bar k}(h) u_j, u_k) + R,  \ R\geq 0. 
\end{equation} 
If $\Theta(\mathcal L,h)\equiv 0 \ \text{on}\ D$ then
\begin{equation}\label{eq:final-CF-06-1}
    \sum(\Theta_{j\bar k} u_j, u_k)\equiv 0.
\end{equation}
\end{theorem}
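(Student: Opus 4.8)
The plan is to pin down the Chern connection explicitly, compute the curvature by one differentiation together with one integration by parts, and then recognise the remaining term as the defect in H\"ormander's $L^2$-estimate on the fixed fibre $D_0$. Since every $\Theta_{j\bar k}=[D_{t^j},\dbar_{t^k}]$ is a zeroth-order operator, the value of $\sum(\Theta_{j\bar k}u_j,u_k)$ at a point $t_0\in\B$ depends only on the $u_j^{t_0}$, and in the product case $D=D_0\times\B$ any such value extends to a holomorphic section of $\mathcal H$ with a $t$-independent representative $\mathbf u_j(t,\mu)=u_j^{t_0}(\mu)$; so I would first reduce to this situation. For such $u_j$ I would extract $D_{t^j}u_j$ from the defining identity \eqref{eq:chern-connection}: differentiating $(u_j,v)=\int_{D_0}i^{n^2}\{u_j,v\}$ under the integral sign, and using $\partial_{t^j}e^{-\phi}=-\phi_j e^{-\phi}$ and $\partial_{t^j}\overline{v^t}=\overline{\dbar_{t^j}v}$, one finds $(D_{t^j}u_j,v)=\int_{D_0}i^{n^2}\big((\partial_{t^j}-\phi_j)u_j^t\big)\wedge\overline{v^t}e^{-\phi}$ for all $v\in\Gamma(\mathcal H)$. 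The form $(\partial_{t^j}-\phi_j)u_j^t$ is not $\dbar^t$-closed — in fact $\dbar^t\big((\partial_{t^j}-\phi_j)u_j^t\big)=-\dbar^t\phi_j\wedge u_j^t$ — so $D_{t^j}u_j$ is represented by $(\partial_{t^j}-\phi_j)\mathbf u_j+\mathbf a_j$, where $a_j^t$ is the fibrewise $L^2$-minimal solution of $\dbar^t a_j^t=\dbar^t\phi_j\wedge u_j^t$: adding $a_j$ restores holomorphy along the fibre, and being orthogonal to $\mathcal H_t$ it leaves the pairing with $v$ unchanged. Here the existence of $a_j^t$ with $\|a_j^t\|^2\le\|\dbar^t\phi_j\wedge u_j^t\|^2_{i\Theta(\mathcal L,h)|_{D_t}}$ is H\"ormander's estimate \cite{Hormander65} on the strongly pseudoconvex $D_0$, using $i\Theta(\mathcal L,h)|_{D_0\times\{t\}}>0$, and the smoothness of $t\mapsto a_j^t$ up to the boundary — so that $D_{t^j}u_j$ is an honest element of $\Gamma(\mathcal H)$, cf.\ Proposition \ref{pr:chern} — is a boundary-regularity statement for the $\dbar$-Neumann problem on $D_0$.

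Next I would compute the curvature through $(\Theta_{j\bar k}u_j,u_k)=-(\dbar_{t^k}D_{t^j}u_j,u_k)$, which is valid since $\dbar_{t^k}u_j=0$. Applying $\dbar$ to the representative $\sum_j(\partial_{t^j}-\phi_j)\mathbf u_j+\mathbf a$ of $\sum_j D_{t^j}u_j$ — with $a:=\sum_j a_j$ the minimal solution of $\dbar^t a=c$, $c=\sum_j\dbar^t\phi_j\wedge u_j$ — and collecting the $d\bar t^k$-component: the $\dbar\mathbf u_j$-terms vanish (the representatives are holomorphic and $t$-independent), differentiating $-\phi_j$ produces $-\phi_{j\bar k}$, and $\dbar\mathbf a$ produces a term $\beta_k$, namely the restriction to $D_t$ of the $d\bar t^k$-part of $\dbar\mathbf a$; thus $\dbar_{t^k}\big(\sum_j D_{t^j}u_j\big)=-\sum_j\phi_{j\bar k}u_j^t+\beta_k$ and
\[
\sum_{j,k}(\Theta_{j\bar k}u_j,u_k)=\sum_{j,k}(\phi_{j\bar k}u_j,u_k)-\sum_k(\beta_k,u_k).
\]
The first sum I would rewrite by means of the pointwise identity $\phi_{j\bar k}=c_{j\bar k}(h)+\sum_{\lambda,\nu}\phi_{j\bar\nu}\phi^{\bar\nu\lambda}\phi_{\lambda\bar k}$ — which is nothing but the definition \eqref{eq:cjkh} of $c_{j\bar k}(h)=\langle V_j^h,V_k^h\rangle_{i\Theta(\mathcal L,h)}$ — together with a direct computation showing that $\sum_{j,k}\big(\sum_{\lambda,\nu}\phi_{j\bar\nu}\phi^{\bar\nu\lambda}\phi_{\lambda\bar k}\big)(u_j,u_k)$ equals the norm $\|c\|^2_{i\Theta(\mathcal L,h)|_{D_t}}$ of the $(n,1)$-form $c$ in the fibre metric $i\Theta(\mathcal L,h)|_{D_t}$; this yields $\sum_{j,k}(\phi_{j\bar k}u_j,u_k)=\sum_{j,k}(c_{j\bar k}(h)u_j,u_k)+\|c\|^2_{i\Theta(\mathcal L,h)|_{D_t}}$.

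It remains to show $\sum_k(\beta_k,u_k)=\|a\|^2$, the single integration by parts of the argument. Since the $u_k$ are $t$-independent and $a^t\perp\mathcal H_t$, so that $(a^t,u_k^t)\equiv0$, differentiating in $\bar t^k$ gives $(\beta_k,u_k)=(a^t,\phi_k u_k^t)$; summing over $k$ and writing $\sum_k\phi_k u_k^t=a^t+\gamma^t$ with $\gamma^t\in\mathcal H_t$ — legitimate because $\dbar^t(\sum_k\phi_k u_k^t)=c=\dbar^t a^t$ — then gives $\sum_k(\beta_k,u_k)=(a^t,a^t+\gamma^t)=\|a\|^2$. Combining the three displays proves \eqref{eq:final-CF-06}, and $R=\|c\|^2_{i\Theta(\mathcal L,h)|_{D_t}}-\|a\|^2\ge0$ is the H\"ormander estimate, $D_0$ being pseudoconvex and $i\Theta(\mathcal L,h)|_{D_t}$ positive. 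For the flat case $\Theta(\mathcal L,h)\equiv0$ on $D$, the weight $\phi$ is pluriharmonic, so $\phi_{j\bar k}\equiv0$ and $\phi_{j\bar\nu}\equiv0$, whence $c\equiv0$, $a\equiv0$, $\beta_k\equiv0$, and $c_{j\bar k}(h)\equiv0$ by definition; this gives \eqref{eq:final-CF-06-1}. I expect the genuinely delicate point to be the boundary regularity and smooth $t$-dependence of the minimal solutions $a_j^t$ — i.e.\ that the Chern connection and curvature operators really act on $\Gamma(\mathcal H)$ and that the differentiations under the integral sign and the integration by parts above produce no boundary contributions; in the product case $\partial D_0$ does not move, so these terms vanish, and it is exactly their appearance for a moving boundary that forces the extra geodesic term in the general curvature formula.
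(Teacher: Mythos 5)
Your argument is correct in its computational core, but note that the paper does not prove Theorem \ref{th:CF-06} directly: it quotes it from Berndtsson (\cite{Bern06}, Theorem 1.1 of \cite{Bern09}), and what you have written is essentially that proof, which is also exactly what the paper's proof of the general Theorem \ref{th:CF} reduces to in the product case. Indeed, for $D=D_0\times\mathbb B$ one may take $\rho$ independent of $t$, so the lifts of Lemma \ref{le:key-lemma} are $V_j=\partial/\partial t^j$, the form $b=\sum(\dbar V_j)|_{D_t}~\lrcorner~u_j$ vanishes, $\theta_{j\bar k}(\rho)\equiv 0$ so the boundary term drops out, and \eqref{eq:final-CF} becomes your identity. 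Your individual steps match the paper's: $D_{t^j}u_j$ is the Bergman projection of $(\partial_{t^j}-\phi_j)u_j^t$, hence equals $(\partial_{t^j}-\phi_j)u_j^t+a_j^t$ with $a_j^t$ the minimal solution (this is \eqref{eq:41000} together with Proposition \ref{pr:abc-fun} with $b=0$); the pointwise identity $\phi_{j\bar k}=c_{j\bar k}(h)+\sum\phi_{j\bar\nu}\phi^{\bar\nu\lambda}\phi_{\lambda\bar k}$ and its pairing against $u_j,u_k$ giving $\sum(c_{j\bar k}(h)u_j,u_k)+\|c\|^2_{i\Theta(\mathcal L,h)|_{D_t}}$ is the same computation as in Lemma \ref{le:boundary}; and your integration by parts $(\beta_k,u_k)=(a,\phi_k u_k)=\|a\|^2$, resting on $a^t\perp\mathcal H_t$ and on $\sum\phi_k u_k^t-a^t$ being holomorphic and square-integrable, is a legitimate (and slightly different) bookkeeping for the paper's step \eqref{eq:curvature22}, where the same quantity arises from the orthogonal decomposition of $\sum a^j$ into its projection $\sum D_{t^j}u_j$ plus the minimal solution. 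The flat case and the appeal to Hamilton--Kohn regularity for smoothness of $t\mapsto a^t$ up to the boundary (needed so that $D_{t^j}u_j\in\Gamma(\mathcal H)$ and the $\bar t^k$-differentiations are justified) are also as in the paper.

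The one genuine gap is your opening reduction: you assert that $\Theta_{j\bar k}$ is \emph{zeroth order}, so that $\sum(\Theta_{j\bar k}u_j,u_k)(t_0)$ depends only on the values $u_j^{t_0}$, and then replace the $u_j$ by their $t$-independent extensions. For an infinite-rank bundle whose smooth sections are defined as fibrewise restrictions of globally smooth forms, this tensoriality is not automatic: you cannot invoke local frames as in finite rank, nor can you write a section vanishing on $D_{t_0}$ as a finite $C^\infty(\mathbb B)$-combination of sections vanishing at $t_0$ while preserving fibrewise holomorphy. This is precisely the point the paper treats in Section 4.3, via the symmetry $(\Theta_{j\bar k}u,v)=(u,\Theta_{k\bar j}v)$ (Lemma \ref{le:symmetry}) together with approximation by holomorphic sections (Lemma \ref{le:extension}). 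In your product setting the fix is short, but it must be stated: prove the symmetry lemma (two lines, by computing $(u,v)_{j\bar k}$ in the two orders), let $\tilde u_j$ be the constant extension of $u_j^{t_0}$ (a genuine holomorphic section here, so Lemma \ref{le:extension} is not needed), and observe that $(\Theta_{j\bar k}u_j,u_k-\tilde u_k)(t_0)=0$ trivially while $(\Theta_{j\bar k}(u_j-\tilde u_j),\tilde u_k)(t_0)=(u_j-\tilde u_j,\Theta_{k\bar j}\tilde u_k)(t_0)=0$ by symmetry, since $u_j^{t_0}-\tilde u_j^{t_0}=0$. With this inserted, your proof is complete.
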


\textbf{Remark}: \eqref{eq:final-CF-06} can be found in the proof of Theorem 1.1 in \cite{Bern09}. \eqref{eq:final-CF-06-1} is a direct application of formula $(2.4)$ in \cite{Bern09}. A special case of \eqref{eq:final-CF-06} for variation of the Bergman kernel is given in \cite{Bern06}.

\medskip

The counterpart of Theorem \ref{th:CF-06} for a proper K\"ahler fibration was given in \cite{Bern11} by Berndtsson. Let us recall the following notions in his formulae:

\medskip

\textbf{Remaining term in H\"ormander's $L^2$-estimate (Polarized fibration)}: Let 
\begin{equation}
\pi: D \to \mathbb B,
\end{equation}
be a proper holomorphic submersion. Assume that $\mathcal L$ is a relatively ample line bundle on $D$, i.e. $i\Theta(\mathcal L,h)|_{D_t}>0, \ \forall \ t\in\mathbb B$. By the Ohsawa-Takegoshi extension theorem (see \cite{OT87} and \cite{Siu02}), we know that the dimension of $\mathcal H_t:=H^0(D_t, K_{D_t}+L_t)$ does not depend on $t$ and our bundle $\mathcal H$ is just the holomorphic vector bundle associated to the zero-th direct image sheaf $\pi_*\mathcal O(K_{D/\mathbb B}+\mathcal L)$. For each $j$, let $V_j^h$ be the horizontal lift of $\partial/\partial t^j$ with respect to $i\Theta(\mathcal L,h)$. Let us denote by
\begin{equation}
\kappa: T_{\mathbb B} \to \{H^1(D_t, T_{D_t})\}_{t\in\mathbb B},
\end{equation}
the Kodaira-Spencer map associated to the holomorphic fibration $\pi$. By Theorem 5.4 in \cite{Kodaira86}, we know that each $(\dbar V_j^h)|_{D_t}$ can be seen as a representative of the Kodaira-Spencer class $\kappa(\partial/\partial t^j)$. For each $1\leq j\leq m$, let  $u_j$ be a smooth section of $\mathcal H$. Put
\begin{equation*}
    b=\sum (\dbar V_j^h)|_{D_t}\lrcorner ~u_j. 
\end{equation*}
Let $a$ be the $L^2$-minimal solution of 
\begin{equation*}
    \dbar^t(\cdot)=\partial^t_\phi b,
\end{equation*}
where $\partial^t_\phi $ is the restriction of the $(1,0)$-part of the Chern connection of $\mathcal L$ on $D_t$. Then we have the following H\"ormander type-remaining term:
\begin{equation*}
    R^h:=||b||^2_{i\Theta(\mathcal L,h)|_{D_t}} -||a||^2\geq 0.
\end{equation*}

\medskip

\textbf{Remaining term in H\"ormander's $L^2$-estimate (K\"ahler fibration)}: In this case, let us assume that the total space of the proper holomorphic submersion $\pi: D \to \mathbb B$ possesses a K\"ahler form $\omega$. Let $\mathcal L$ be a flat line bundle over $D$, i.e.
$\Theta(\mathcal L,h)\equiv 0 \ \text{on}\ D$. By Theorem 8.1 in \cite{Bern09}, we know that $\pi_*\mathcal O(K_{D/\mathbb B}+\mathcal L)$ is locally free. Let $\mathcal H$ be the associated vector bundle. For each $j$, let $u_j$ be a smooth section of $\mathcal H$. Put
\begin{equation*}
    b=\sum (\dbar V_j^\omega)|_{D_t}\lrcorner ~u_j,
\end{equation*}
where each $V_j^{\omega}$ is the horizontal lift of $\partial/\partial t^j$ with respect to the K\"ahler form $\omega$. Consider
\begin{equation*}
    \dbar^t(a)=\partial^t_\phi b,
\end{equation*}
where $a$ is the $L^2$-minimal solution. Then the associated H\"ormander remaining term is
\begin{equation*}
    R^{\omega}:=||b||^2_{\omega|_{D_t}} -||a||^2\geq 0.
\end{equation*}
Now we can state the following theorem of Berndtsson (see \cite{Bern11}):

\begin{theorem}\label{th:CF-11} Let $\pi: D \to \mathbb B$ be a proper holomorphic submersion. Let $(\mathcal L,h)$ be a holomorphic line bundle over $D$. If $\mathcal L$ is  relatively ample then we have
\begin{equation}\label{eq:final-CF-11}
    \sum(\Theta_{j\bar k} u_j, u_k)= \sum(c_{j\bar k}(h) u_j, u_k) + R^h. 
\end{equation} 
If $\mathcal L$ is flat then
\begin{equation}\label{eq:final-CF-11-1}
    \sum(\Theta_{j\bar k} u_j, u_k)\equiv R^{\omega}.
\end{equation}
\end{theorem}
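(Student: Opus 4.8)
\emph{Proof proposal.} I would follow Berndtsson's strategy from \cite{Bern09, Bern11}: realize the $(1,0)$-part of the Chern connection through a horizontal lift, differentiate the fibrewise $L^2$-norm by flowing along that lift, and extract the \emph{remaining term} from the Bochner--Kodaira--Nakano identity (equivalently H\"ormander's $L^2$-estimate) on each compact K\"ahler fibre $D_t$. Below I describe the relatively ample case; the flat case is formally the same, with $i\Theta(\mathcal L,h)$ replaced by the K\"ahler form $\omega$ on $D$ and with $c_{j\bar k}(h)\equiv 0$ by the definition \eqref{eq:cjkh}.

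Fix smooth holomorphic sections $u_1,\dots,u_m$ of $\mathcal H$ (Definition \ref{de: holo-section-H}). Since $\pi$ is proper and $\mathcal L$ is relatively ample, $\mathcal H$ is a genuine holomorphic vector bundle by Ohsawa--Takegoshi \cite{OT87}, so each $u_j$ has a representative $\mathbf u_j$ with $\mathbf u_j\wedge dt$ holomorphic on $D$ --- a local holomorphic extension that will be unavailable in the boundary setting of Theorem \ref{th:CF}. Starting from the defining relation \eqref{eq:chern-connection} and using $\dbar_{t^k}u_j=0$, a short manipulation gives the bundle identity
\begin{equation*}
\sum_{j,k}(\Theta_{j\bar k}u_j,u_k)=-\sum_{j,k}\partial_{t^j}\partial_{\bar t^k}(u_j,u_k)+\Big\|\sum_j D_{t^j}u_j\Big\|^2,
\end{equation*}
so the whole computation reduces to evaluating these two terms.

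For the connection term I would use that $\mathbf u_j\wedge dt$ is holomorphic of top holomorphic degree on $D$, so that $d(\mathbf u_j\wedge dt)=0$ and $\mathrm{Lie}_{V_j^h}(\mathbf u_j\wedge dt)=(\mathrm{Lie}_{V_j^h}\mathbf u_j)\wedge dt$, where $V_j^h$ is the horizontal lift of $\partial/\partial t^j$ with respect to $i\Theta(\mathcal L,h)$. A direct computation (most transparent in local product charts, where $\mathbf u_j$ may be taken holomorphic) then identifies $D_{t^j}u_j$ as the Bergman projection onto $\mathcal H_t$ of $i_t^*(\mathrm{Lie}_{V_j^h}\mathbf u_j)$, and shows that this restriction fails to be $\dbar^t$-closed by exactly $\partial^t_\phi b_j$, where $b_j=(\dbar V_j^h)|_{D_t}\lrcorner\, u_j$ is the Kodaira--Spencer contraction in the statement; here the commutation of $\dbar$ with the Lie derivative along a $(1,0)$-field is what brings $\dbar V_j^h$ into play. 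Consequently $D_{t^j}u_j=i_t^*(\mathrm{Lie}_{V_j^h}\mathbf u_j)-a_j$, where $a_j$ is the $L^2$-minimal solution of $\dbar^t a_j=\partial^t_\phi b_j$ (which exists with closed range because $D_t$ is compact K\"ahler). Establishing precisely this description --- in particular that the correction is the \emph{minimal} solution and not merely \emph{a} solution --- is the step I expect to be the main obstacle: it requires identifying the orthogonal complement of $\mathcal H_t$ inside the $L^2$ forms on $D_t$ and checking compatibility with the axiom \eqref{eq:chern-connection}.

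Finally I would differentiate $(u_j,u_k)=\int_{D_t}i^{n^2}\{\mathbf u_j,\mathbf u_k\}$ twice along the horizontal lifts; through Cartan's formula the curvature $i\Theta(\mathcal L,h)$ enters and produces a term equal to $(c_{j\bar k}(h)u_j,u_k)$ --- the \emph{geodesic term}, coming from $\langle V_j^h,V_k^h\rangle_{i\Theta(\mathcal L,h)}$ --- together with cross terms built from $b_j$, $\partial^t_\phi b_j$ and $a_j$. Expanding $\|\sum_j D_{t^j}u_j\|^2=\|\sum_j(i_t^*\mathrm{Lie}_{V_j^h}\mathbf u_j-a_j)\|^2$ and integrating by parts on $D_t$ --- which yields no boundary contribution precisely because $\pi$ is proper and the fibres are closed, the exact feature that will fail for Theorem \ref{th:CF} --- the cross terms cancel against those from $\partial_{t^j}\partial_{\bar t^k}(u_j,u_k)$, and what survives assembles into $\|b\|^2_{i\Theta(\mathcal L,h)|_{D_t}}-\|a\|^2=R^h$. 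The inequality $R^h\ge 0$ is then the fibrewise H\"ormander $L^2$-estimate, valid because $L_t$ is ample on the compact K\"ahler manifold $D_t$. In the flat case the geodesic term vanishes identically and the identity collapses to $\sum(\Theta_{j\bar k}u_j,u_k)\equiv R^\omega$.
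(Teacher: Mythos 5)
Your proposal is essentially the argument behind this statement: the paper does not prove Theorem \ref{th:CF-11} itself (it is quoted from \cite{Bern11}), but its own proof of Theorem \ref{th:CF} in Section 4 follows exactly the route you describe --- identify $D_{t^j}u_j$ as the Bergman projection of the $(n,0)$-part of the twisted Lie derivative $i_t^*(L_j\mathbf{u}_j)$ along the horizontal lift, so that the correction is the $L^2$-minimal solution of $\dbar^t(\cdot)=\partial^t_\phi b$ (the term $c$ vanishing because the lift is horizontal for $i\Theta(\mathcal L,h)$), and extract $c_{j\bar k}(h)$ from the commutator $[L_j,L_{\bar k}]$ --- the only new feature in the paper being the boundary term from the fibre-integral variation. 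Apart from inessential conventions (your correction term carries the opposite sign to the paper's computation $\dbar^t a^j+\partial^t_\phi b^j=-c^j$, and one should pass to the $(n,0)$-part of the Lie derivative and use primitivity of $b$ with respect to $i\Theta(\mathcal L,h)|_{D_t}$, resp. $\omega|_{D_t}$, to identify $-\pi_*(c_n\{b,b\})$ with $+||b||^2$), your sketch is correct.
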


\textbf{Remark}: If $\mathcal L$ is trivial then \eqref{eq:final-CF-11-1} is just Griffiths' formula (see page 33 in \cite{Griffiths84}). In general, if the total space $D$ is K\"ahler and there is a smooth Hermitian metric on $\mathcal L$ with non-negative curvature then by Thereom 1.2 in \cite{Bern09}, we know that 
\begin{equation*}
    \sum(\Theta_{j\bar k} u_j, u_k)\geq 0.
\end{equation*}

\medskip

If the boundary of each fibre is non-empty then, in general, the boundary term should also appear in the curvature formula (see \cite{MY04}). Our main result is a study of the curvature of $\mathcal H$ for fibrations with boundary.  

\subsection{Basic notions for fibrations with boundary} Let $D=\{D_t\}_{t\in\mathbb B}$ be a smooth family of smoothly bounded Stein domains (see Definition \ref{de:smoothfamily}). We shall define the notion of \textquotedblleft geodesic curvature\textquotedblright\ of $\{\partial D_t\}$ by using the notion of  \emph{horizontal lift with respect to the Levi-form on the boundary} of $D$. Let $\rho$ be the defining function in $\mathbf{A2}$. We call an $(1,0)$-\emph{tangent} vector field $V$ on $\partial D$ horizonal with respect to the Levi-form if
\begin{equation*}
\langle V, W \rangle_{i\partial\dbar\rho}=0, \ \text{on}\ \partial D,
\end{equation*}
for every $(1,0)$-\emph{tangent} vector field $W$ on $\partial D$ such that $\pi_*(W)=0$. 

\medskip

\textbf{Remark}: From the above definition, the notion of  horizontal lift with respect to the Levi-form on the boundary is compatible with the usual notion of horizintal lift if we only consider the category of tangent vector fields on $\partial D$.
Same as before, we shall also define the notion of geodesic curvature in the following sense:

\begin{definition}\label{de:geodesic-c-bdy} Assume that $D$ satisfies $\mathbf{A1}$ and $\mathbf{A2}$ and for each $j$,  $\partial/\partial t^j$ has a horizontal lift to $\partial D$ with respect to the Levi-form $i\partial\dbar\rho$ on $\partial D$. Then
we call
\begin{equation}\label{eq:cjkrho}
    \theta_{j\bar k}(\rho):=\langle V_j^\rho, V_k^\rho\rangle_{i\partial\dbar\rho},
\end{equation}
the geodesic curvature of $\{\partial D_t\}_{t\in\mathbb B}$ with respect to the Levi form $i\partial\dbar\rho$ on $\partial D$.
\end{definition}

Now a natural question is whether each base vector field has a horizontal lift (with respect to the Levi-form) to $\partial D$ or not. We have the following  lemma:

\begin{lemma}[Key Lemma]\label{le:key-lemma} Assume that $D$ satisfies $\mathbf{A1}$ and $\mathbf{A2}$. Put
\begin{equation}
\omega:=i\partial\dbar(-\log-\rho), 
\end{equation} 
where $\rho$ is the defining function in $\mathbf{A2}$. For each $j$, let $V_j$ be the horizontal lift (on $D$) of  $\partial/\partial t^j$ with respect to $\omega$. Then each $V_j$ is smooth up to the boundary of $D$ and $V_j|_{\partial D}$ is horizontal with respect to the Levi form $i\partial\dbar\rho$ on $\partial D$. In particular, every smooth base vector field has a unique smooth horizontal lift with respect to the Levi form and the geodesic curvature $\theta_{j\bar k}(\rho)$ is well defined on $\partial D$.
\end{lemma}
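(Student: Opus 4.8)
The plan is to pin down $V_j$ by an explicit local computation in coordinates adapted to $\pi$, using the horizontal-lift formula $V_j=\partial/\partial t^j-\sum_\lambda\big(\sum_{\bar\nu}\phi_{j\bar\nu}\phi^{\bar\nu\lambda}\big)\partial/\partial\mu^\lambda$ recalled above, with the potential $\psi:=-\log(-\rho)$ playing the role of $\phi$. First I would fix, over a small ball in $\B$, holomorphic coordinates $(t,\mu)$ on $\mathcal X$ with $\pi(t,\mu)=t$, so that the $\partial/\partial\mu^\lambda$ span the vertical directions. Since $\rho<0$ on $D$ and $\rho|_{\pi^{-1}(t)}$ is strictly plurisubharmonic near $\overline{D_t}$ by $\mathbf{A2}$, the function $\psi$ is smooth on $D$, and differentiating gives
\begin{equation*}
\psi_{\lambda\bar\nu}=\frac{1}{-\rho}\Big(\rho_{\lambda\bar\nu}+\frac{\rho_\lambda\rho_{\bar\nu}}{-\rho}\Big),\qquad \psi_{j\bar\nu}=\frac{1}{-\rho}\Big(\rho_{j\bar\nu}+\frac{\rho_j\rho_{\bar\nu}}{-\rho}\Big).
\end{equation*}
Thus the fibre Hessian $(\psi_{\lambda\bar\nu})$ is $\tfrac{1}{-\rho}$ times a rank-one perturbation of the positive-definite fibre Levi Hessian $(\rho_{\lambda\bar\nu})$; in particular $(\psi_{\lambda\bar\nu})>0$ on all of $D$, so $\omega=i\partial\dbar\psi$ is a relative K\"ahler form, the horizontal lift is well defined on $D$, and it is given by the displayed formula.

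The heart of the argument is to invert $(\psi_{\lambda\bar\nu})$ by the Sherman--Morrison formula and observe that the $1/\rho$-singular terms cancel. Set $A_j^\lambda:=\sum_{\bar\nu}\rho_{j\bar\nu}\rho^{\bar\nu\lambda}$, $\rho^\lambda:=\sum_{\bar\nu}\rho^{\bar\nu\lambda}\rho_{\bar\nu}$ and $B:=\sum_{\lambda,\bar\nu}\rho^{\bar\nu\lambda}\rho_\lambda\rho_{\bar\nu}$, the last being the squared norm of the $(1,0)$ fibre gradient of $\rho$ in the Levi metric. A short computation then yields
\begin{equation*}
\sum_{\bar\nu}\psi_{j\bar\nu}\psi^{\bar\nu\lambda}=A_j^\lambda+\frac{\big(\rho_j-\sum_\alpha A_j^\alpha\rho_\alpha\big)\,\rho^\lambda}{-\rho+B},
\end{equation*}
equivalently $V_j=V_j^0-\dfrac{\partial\rho(V_j^0)}{-\rho+B}\sum_\lambda\rho^\lambda\,\partial/\partial\mu^\lambda$, where $V_j^0:=\partial/\partial t^j-\sum_\lambda A_j^\lambda\,\partial/\partial\mu^\lambda$ is the horizontal lift of $\partial/\partial t^j$ with respect to the fibrewise Levi form $i\partial\dbar\rho$. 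By $\mathbf{A1}$ and $\mathbf{A2}$ (which, locally over $\B$, make the fibre Hessian invertible on a neighborhood of $\overline{D}$) the functions $A_j^\lambda,\rho_j,\rho^\lambda,B$ are smooth there; on $\overline{D}$ we have $-\rho\ge 0$ and $B\ge 0$, with $B>0$ on $\partial D$ because the fibre gradient of $\rho$ has no zero there. Hence $-\rho+B>0$ on $\overline{D}$, so the displayed coefficient extends smoothly across $\partial D$, which shows each $V_j$ is smooth up to the boundary.

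With this formula the remaining assertions are quick. Since $\partial\rho\big(\sum_\lambda\rho^\lambda\partial/\partial\mu^\lambda\big)=\sum_\lambda\rho_\lambda\rho^\lambda=B$, we get $\partial\rho(V_j)=\partial\rho(V_j^0)\cdot\dfrac{-\rho}{-\rho+B}$, which vanishes on $\partial D$; as $V_j$ has type $(1,0)$ this means $V_j|_{\partial D}$ is tangent to $\partial D$. For horizontality with respect to the Levi form, let $W=\sum_\lambda W^\lambda\partial/\partial\mu^\lambda$ be a vertical $(1,0)$-tangent field on $\partial D$, so $\partial\rho(W)=\sum_\lambda\rho_\lambda W^\lambda=0$ on $\partial D$. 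By definition $\langle V_j^0,W\rangle_{i\partial\dbar\rho}=0$, while $\langle\sum_\lambda\rho^\lambda\partial/\partial\mu^\lambda,\,W\rangle_{i\partial\dbar\rho}=\sum_{\bar\nu}\rho_{\bar\nu}\overline{W^\nu}=\overline{\partial\rho(W)}$, whence
\begin{equation*}
\langle V_j,W\rangle_{i\partial\dbar\rho}=-\frac{\partial\rho(V_j^0)}{-\rho+B}\,\overline{\partial\rho(W)}=0\quad\text{on }\partial D.
\end{equation*}
Finally, the Levi form restricted to the vertical complex tangent space $T^{1,0}\partial D_t$ is positive definite (since $\rho|_{\pi^{-1}(t)}$ is strictly plurisubharmonic), so a $(1,0)$-tangent lift of $\partial/\partial t^j$ that is Levi-orthogonal to all vertical tangent directions is unique; existence and smoothness having been established, every smooth base field $\sum v^j(t)\partial/\partial t^j$ has the unique smooth horizontal lift $\sum v^j(t)\,V_j|_{\partial D}$, and $\theta_{j\bar k}(\rho)=\langle V_j^\rho,V_k^\rho\rangle_{i\partial\dbar\rho}$ is a well-defined smooth function on $\partial D$.

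The main obstacle is exactly the boundary regularity in the second paragraph. A priori $\omega=i\partial\dbar(-\log-\rho)$ is a complete K\"ahler metric on each fibre that blows up at $\partial D_t$, so there is no reason its horizontal lift should extend across $\partial D$; the point of this particular potential is that the $1/\rho$ and $1/\rho^2$ singularities cancel, leaving the harmless denominator $-\rho+B$, which stays bounded away from $0$ near $\partial D$ precisely because the fibre gradient in $\mathbf{A2}$ does not vanish there. This is the hypothesis whose role is delicate — if $B$ were allowed to vanish somewhere on $\partial D$ the argument would collapse — and everything downstream (tangency, Levi-horizontality, uniqueness) is then routine linear algebra on $\partial D$.
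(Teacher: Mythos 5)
Your argument is correct and is essentially the paper's second proof of the Key Lemma: the Sherman--Morrison inversion of $\psi_{\lambda\bar\nu}$ for $\psi=-\log(-\rho)$ is exactly the computation \eqref{eq:vj-5}--\eqref{eq:vj-6} (your $B$ is the quantity denoted $|\partial\rho|^2$ there, the squared norm of the fibre gradient in the Levi metric), and the cancellation of the $1/\rho$-singular terms leaving the denominator $-\rho+B>0$ near $\partial D$ is precisely the point, with tangency, Levi-horizontality and uniqueness following as in your last paragraph. You merely write out explicitly the steps the paper leaves as ``direct computation,'' so no further comments are needed.
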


As a generalization of Definition 1.2, we shall introduce the following definition:

\begin{definition}\label{de:inter-psc} Assume that $D$ satisfies $\mathbf{A1}$ and $\mathbf{A2}$. We call $\{D_t\}_{t\in\mathbb B}$ an interpolation family in $\mathcal X$ if $\theta_{j\bar k}(\rho) \equiv 0$ on $\partial D$.
\end{definition}

\textbf{Remaining term in H\"ormander's $L^2$-estimate (fibration with boundary)}: For each $1\leq j\leq m$, let $u_j$ be a smooth section of $\mathcal H$ (see Definition \ref{de:smooth}). 
Put
\begin{equation}\label{eq:cb}
    c=\sum (V_j ~ \lrcorner ~\Theta(\mathcal L,h))|_{D_t} \wedge u_j, \ b=\sum(\dbar V_j)|_{D_t} ~\lrcorner~ u_j,
\end{equation}
where each $V_j$ is the vector field in Lemma \ref{le:key-lemma}. Let $a$ be the $L^2$-minimal solution of
\begin{equation*}
    \dbar^t(\cdot)=\partial^t_\phi b+c,
\end{equation*}
in $L^2(D_t, K_{D_t}+L_t)$. Put
\begin{equation*}
    \omega^t:=i\partial\dbar(-\log-\rho)|_{D_t}.
\end{equation*}Then we shall define
\begin{equation}\label{eq:new-3}
    R:=||c||^2_{i\Theta(\mathcal L,h)|_{D_t}}+||b||^2_{\omega^t}-||a||^2_{\omega^t},
\end{equation}
if $i\Theta(\mathcal L,h)|_{D_t}>0$; and define
\begin{equation*}
    R:=||b||^2_{\omega^t}-||a||^2_{\omega^t},
\end{equation*}
if $\Theta(\mathcal L,h)\equiv 0$. We shall show in Theorem \ref{th:L2} that $R$ is always non-negative.

\subsection{Main theorem} 

\begin{theorem}\label{th:CF} Assume that $D$ satisfies $\mathbf{A1}$ and $\mathbf{A2}$. If
\begin{equation}\label{eq:CF)}
    i\Theta(\mathcal L,h)|_{D_t}>0, \ \forall \ t\in\mathbb B, \ \text{or} \ \Theta(\mathcal L,h)\equiv 0 \ \text{on}\ D,
\end{equation}
then, using the above notation, see \eqref{eq:new-1}, \eqref{eq:cjkh}, \eqref{eq:cjkrho}, \eqref{eq:new-3}, we have the following curvature formula of $\mathcal H$:
\begin{equation}\label{eq:final-CF}
    \sum(\Theta_{j\bar k} u_j, u_k)= \sum\int_{\partial D_t} \theta_{j\bar k}(\rho) \langle u_j,u_k\rangle d \sigma
    + \sum(c_{j\bar k}(h) u_j, u_k) + R,
\end{equation}
where $\langle \cdot, \cdot \rangle$ denotes the point-wise inner product with respect to $i\partial\dbar\rho|_{D_t}$ and $h$, and the surface measure $d\sigma$ with respect to $i\partial\dbar\rho|_{D_t}$ is defined as
\begin{equation*}
    d\sigma:= \frac{\sum \rho_{\bar\lambda}\rho^{\bar\lambda\nu}\partial/\partial \mu^\nu}{\sum \rho_{\bar\lambda}\rho^{\bar\lambda\nu}\rho_{\nu}} ~ \lrcorner ~ \frac{(i\partial\dbar\rho|_{D_t})^n}{n!}.
\end{equation*}
\end{theorem}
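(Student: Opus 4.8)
We outline how one would establish Theorem \ref{th:CF}; the scheme is the one Berndtsson used in the boundaryless situations (Theorems \ref{th:CF-06} and \ref{th:CF-11}), the new point being to keep track of the boundary contributions that the moving domain $\{D_t\}$ produces and to recognize them as geodesic curvature.

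The plan is to begin with two reductions. Since each $\Theta_{j\bar k}$ is a tensorial operator on $\mathcal H$, it is enough to evaluate $\sum(\Theta_{j\bar k}u_j,u_k)$ at a single $t_0\in\mathbb B$, which after an affine change of base coordinates we take to be $0$, and the value at $0$ depends only on the fibre values $u_j(0)$. Hence we may replace the representatives $\mathbf u_j$ by ones for which $\mathbf u_j\wedge dt$ is holomorphic along $D_0$ to first order, i.e. $\dbar_{t^l}u_j$ vanishes at $0$ for every $l$ (subtract $\sum_l\bar t^l w_{j,l}$ with $w_{j,l}$ a smooth extension of $\dbar_{t^l}u_j(0)$). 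With this normalization the Hilbert-bundle identity relating curvature, the second derivative of the Hermitian norm and the connection derivatives reduces the computation of $\sum(\Theta_{j\bar k}u_j,u_k)(0)$ to (a) an explicit formula for $D_{t^j}u_j$ and (b) the double differentiation of the norm integrals $\int_{D_t}i^{n^2}\{\mathbf u_j,\mathbf u_k\}$, in which the variation of the domain must be handled with care.

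Next I would make all the objects concrete using the horizontal lifts $V_j$ supplied by Lemma \ref{le:key-lemma}. The reason for lifting horizontally with respect to $\omega=i\partial\dbar(-\log-\rho)$ rather than with respect to $i\Theta(\mathcal L,h)$ is threefold: (i) by the Key Lemma the $V_j$ are smooth up to $\partial D$ and tangent to it, so their real flows move $D_t$ through the family and restrict to diffeomorphisms of the $\partial D_t$, which legitimizes differentiation under the integral sign with a controlled Stokes boundary term even though the elements of $\mathcal H_t$ are only smooth up to the boundary; (ii) each $\omega^t=\omega|_{D_t}$ is a \emph{complete} K\"ahler metric on the fibre, so H\"ormander's $L^2$-machinery on $D_t$ — solvability of $\dbar^t(\cdot)=\partial^t_\phi b+c$, existence and the orthogonality characterization of the $L^2$-minimal solution $a$ — is available for forms that are merely smooth up to $\partial D_t$; (iii) contracting with $V_j$ turns the Kodaira--Spencer data into $b=\sum(\dbar V_j)|_{D_t}\lrcorner u_j$ and the failure of $V_j$ to be $\Theta(\mathcal L,h)$-horizontal into the extra term $c=\sum(V_j\lrcorner\Theta(\mathcal L,h))|_{D_t}\wedge u_j$. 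A computation of $\dbar\mathbf u_j$, of $\dbar(\mathbf u_j\wedge dt)$ and of the Lie derivatives $L_{V_j}$ of the relevant forms (as in \cite{Bern11}) then yields $\dbar_{t^k}u_j$ and an expression for $D_{t^j}u_j$ modulo a $\dbar^t$-exact term; the ambiguity is pinned down by the requirement $D_{t^j}u_j\in\mathcal H_0$, i.e. by orthogonal projection, which is exactly what introduces the minimal solution $a$.

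The crux is the double differentiation in (b). Computing $\partial_{t^j}$ by pulling back along the flow of $V_j$ (and $\partial_{\bar t^k}$ along the flow of $\bar V_k$), Cartan's formula gives, at each stage, an interior piece plus $\int_{\partial D_t}\delta_V(\cdot)$, and it is precisely the mixed ($\partial_{t^j}$ then $\partial_{\bar t^k}$) boundary piece that must be identified. Using that $V_j|_{\partial D}$ is horizontal for the Levi form $i\partial\dbar\rho$ and the explicit shape of $d\sigma$, this boundary term collapses to $\sum\int_{\partial D_t}\theta_{j\bar k}(\rho)\langle u_j,u_k\rangle\,d\sigma$ with $\theta_{j\bar k}(\rho)$ as in Definition \ref{de:geodesic-c-bdy}. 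The remaining interior contributions, combined with $\sum(D_{t^j}u_j,D_{t^k}u_k)$, are reorganized by the Bochner--Kodaira--Morrey--H\"ormander identity on $(D_t,\omega^t)$: the curvature of $h$ yields, after separating its $\Theta(\mathcal L,h)$-horizontal part, the term $\sum(c_{j\bar k}(h)u_j,u_k)$, while the $\dbar^t$/Laplacian terms assemble into $R=\|c\|^2_{i\Theta(\mathcal L,h)|_{D_t}}+\|b\|^2_{\omega^t}-\|a\|^2_{\omega^t}$; that $a$ is the $L^2$-minimal solution then forces $R\ge 0$, which is Theorem \ref{th:L2}. The flat case $\Theta(\mathcal L,h)\equiv 0$ is the same computation with the $c$- and $c_{j\bar k}(h)$-terms deleted. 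The main obstacle is this last step: both the precise evaluation of the boundary integral — showing it is exactly the geodesic-curvature term rather than some other combination of $V_j$, $\rho$ and the Levi form — and the verification that every integration by parts and every appeal to the $L^2$-theory is legitimate for sections only smooth up to $\partial D$. This is where Lemma \ref{le:key-lemma} and the choice $\omega=i\partial\dbar(-\log-\rho)$ are indispensable: without the smoothness and boundary-tangency of $V_j$ the Stokes computation fails, and without completeness of $\omega^t$ the H\"ormander identity is not available in the stated form.
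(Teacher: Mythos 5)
Your computational core is essentially the paper's: the horizontal lifts $V_j$ of Lemma \ref{le:key-lemma}, the Lie-derivative/commutator computation, the identification of the boundary integral with $\sum\int_{\partial D_t}\theta_{j\bar k}(\rho)\langle u_j,u_k\rangle d\sigma$ using the tangency of $V_j$ to $\partial D$, the recognition of $D_{t^j}u_j$ as a Bergman projection so that $a$ becomes the $L^2$-minimal solution of $\dbar^t(\cdot)=\partial^t_\phi b+c$, and the appeal to the complete metric $\omega^t$ and Theorem \ref{th:L2} for $R$ (where you should also note that the sign $-\pi_*(c_n\{b,b\})=\|b\|^2_{\omega^t}$ needs the primitivity of $b$ with respect to $\omega^t$, Lemma \ref{le:primitive}, which is exactly why the lift is taken $\omega$-horizontal). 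The genuine gap is in your opening reduction. You assert that $\Theta_{j\bar k}$ is tensorial, so that $\sum(\Theta_{j\bar k}u_j,u_k)(0)$ depends only on the fibre values $u_j(0)$, and you then normalize so that $\dbar_{t^l}u_j(0)=0$. In this infinite-rank setting with varying Hilbert spaces, the pointwise dependence is precisely what has to be proved (it is the content of the paper's Section 4.3, and the usual bump-function/Hadamard argument is not available off the shelf: $t$-derivatives taken in a smooth, non-holomorphic trivialization do not preserve fibrewise holomorphy, so one cannot simply write a section vanishing at $0$ as $\sum t^l v_l+\bar t^l w_l$ with $v_l,w_l\in\Gamma(\mathcal H)$). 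Moreover, even granting pointwise dependence, first-order normalization is too weak to start the computation: the identity $(\Theta_{j\bar k}u_j,u_k)=(D_{t^j}u_j,D_{t^k}u_k)-(u_j,u_k)_{j\bar k}$ requires $\dbar_{t^k}u_j\equiv 0$ identically near $0$. If $\dbar_{t^l}u_j$ vanishes only at the point, then differentiating $\partial_{\bar t^k}(u_j,u_k)=(\dbar_{t^k}u_j,u_k)+(u_j,D_{t^k}u_k)$ leaves at $0$ the terms $(\dbar_{t^k}D_{t^j}u_j,u_k)(0)$ and $(u_j,D_{t^k}\dbar_{t^j}u_k)(0)$, which involve first derivatives of quantities vanishing only at $0$ and do not drop out; so the reduction you describe does not isolate the curvature.

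The paper closes exactly this gap by a different route: it first proves \eqref{eq:final-CF} for genuinely holomorphic sections of $\mathcal H$ (Definition \ref{de: holo-section-H}), for which the starting identity is exact, and then extends to arbitrary smooth sections by combining the symmetry $(\Theta_{j\bar k}u,v)=(u,\Theta_{k\bar j}v)$ (Lemma \ref{le:symmetry}) with an approximation result (Lemma \ref{le:extension}): for $u\in\Gamma(\mathcal H)$ and $t_0$ fixed, one takes Bergman projections of $u$ on the enlarged fibres $\{\rho(t_0,\cdot)<\varepsilon s\}$, extends them to holomorphic sections near $t_0$ via Siu's Stein-neighbourhood theorem and Cartan's theorem, and controls their boundary regularity in $s$ by Hamilton's theorem; since the data $a(s),b(s),c(s)$ in the holomorphic-section formula depend only on $u_j^{(s)}|_{D_{t_0}}$, letting $s\to 0$ yields the formula for general smooth sections. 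If you replace your ``tensoriality plus first-order normalization'' step by this (or an equivalent) argument, the rest of your outline matches the paper's proof.
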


\subsection{Applications} \ \

\medskip

We shall show how to use our main theorem to study the complex geometry counterparts of Theorem 1.4. Let us give some positive-curvature criterion of $\mathcal H$ first. Recall that, $\mathcal H$ is said to be semi-positive in the sense of Nakano if $\sum(\Theta_{j\bar k} u_j, u_k) \geq 0$, for all smooth sections $u_1, \cdots, u_m$ of $\mathcal H$. As a direct consequence of our main theorem, we shall prove that:

\begin{corollary}\label{co:nakano} Assume that $D$ satisfies $\mathbf{A1}$ and $\mathbf{A2}$. If $D$ is Stein and $  i\Theta(\mathcal L,h)\geq 0$ on $D$ then $\mathcal H$ is semi-positive in the sense of Nakano.
\end{corollary}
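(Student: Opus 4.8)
The plan is to read the corollary off from the curvature formula \eqref{eq:final-CF}, after a regularization that places us inside the hypothesis \eqref{eq:CF)} of Theorem \ref{th:CF}. Nakano semi-positivity of $\mathcal H$ is a pointwise condition in $t$, so fix $t_0\in\mathbb B$ and fix smooth sections $u_1,\dots,u_m$ of $\mathcal H$. Using $\mathbf{A2}$ and the properness in $\mathbf{A1}$, near $D_{t_0}$ I would choose, on $\pi^{-1}(B')$ for a small ball $B'\subset\subset\mathbb B$ with $t_0\in B'$, a smooth strictly plurisubharmonic function $\psi$ --- for example $\psi=\rho+C|t-t_0|^2$ with $C$ large enough that the fibrewise strict plurisubharmonicity of $\rho$ dominates its mixed Hessian, the relevant quantities being bounded on the compact set $\overline D\cap\pi^{-1}(\overline{B'})$. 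Set $h_\varepsilon:=h\,e^{-\varepsilon\psi}$; then $i\Theta(\mathcal L,h_\varepsilon)=i\Theta(\mathcal L,h)+\varepsilon\,i\partial\dbar\psi$ is a strictly positive $(1,1)$-form on $D\cap\pi^{-1}(B')$, so in particular positive on every fibre $D_t$, $t\in B'$; thus \eqref{eq:CF)} holds and the horizontal lifts $V_j^{h_\varepsilon}$ are defined. Since $\psi$ is bounded on each $\overline{D_t}$, replacing $h$ by $h_\varepsilon$ changes neither $\mathcal H_t$ nor the space of smooth sections, only the metric, so it suffices to prove the Nakano inequality at $t_0$ for each $h_\varepsilon$ and then let $\varepsilon\to0$.

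So let $h'$ be a metric with $i\Theta(\mathcal L,h')>0$ as a $(1,1)$-form near $D_{t_0}$, and apply \eqref{eq:final-CF} at $t_0$; I claim each of its three terms is $\geq0$. For the boundary term I argue pointwise on $\partial D_{t_0}$: by Lemma \ref{le:key-lemma} each $V_j^\rho$ lies in $T^{1,0}_p\partial D$, so for $\xi\in\C^m$ one has $\sum_{j,k}\theta_{j\bar k}(\rho)(p)\,\xi_j\overline{\xi_k}=\langle W,W\rangle_{i\partial\dbar\rho}$ with $W:=\sum_j\xi_j V_j^\rho\in T^{1,0}_p\partial D$; since $D$ is Stein, $\partial D$ is pseudoconvex, hence $i\partial\dbar\rho$ restricts to a positive semi-definite Hermitian form on $T^{1,0}_p\partial D$ and $\langle W,W\rangle_{i\partial\dbar\rho}\geq0$. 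Therefore $\bigl(\theta_{j\bar k}(\rho)(p)\bigr)_{j,k}$ is positive semi-definite, and so is the Gram matrix $\bigl(\langle u_j,u_k\rangle(p)\bigr)_{j,k}$; by Schur's product theorem their Hadamard product is positive semi-definite, so evaluating it at the all-ones vector gives $\sum_{j,k}\theta_{j\bar k}(\rho)(p)\langle u_j,u_k\rangle(p)\geq0$. Integrating against the positive measure $d\sigma$ shows the boundary term is non-negative.

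The remaining two terms are immediate. The matrix $\bigl(c_{j\bar k}(h')(z)\bigr)_{j,k}=\bigl(\langle V_j^{h'},V_k^{h'}\rangle_{i\Theta(\mathcal L,h')}(z)\bigr)_{j,k}$ is a Gram matrix for the Hermitian form $i\Theta(\mathcal L,h')(z)$ on $T^{1,0}_z\mathcal X$, which is positive because $i\Theta(\mathcal L,h')>0$ near $D_{t_0}$; the same Schur-product argument gives $\sum_{j,k}(c_{j\bar k}(h')u_j,u_k)\geq0$. And the remaining term $R$ of \eqref{eq:new-3} is $\geq0$ by Theorem \ref{th:L2}. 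Adding the three non-negative terms, \eqref{eq:final-CF} gives $\sum_{j,k}(\Theta_{j\bar k}u_j,u_k)\geq0$ at $t_0$ for $h'$; applying this to $h'=h_\varepsilon$ and letting $\varepsilon\to0$ yields the same inequality for $h$, and since $t_0$ and $u_1,\dots,u_m$ were arbitrary, $\mathcal H$ is Nakano semi-positive.

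I expect the only genuine obstacle to be the limiting step: one must know that the curvature operators $\Theta_{j\bar k}$ --- equivalently, the implicitly defined connection operators $D_{t^j}$ furnished by Proposition \ref{pr:chern} --- depend continuously on the Hermitian metric on $\mathcal L$, so that the Nakano inequality proved for each $h_\varepsilon$ persists as $\varepsilon\to0$; this is exactly where the analysis behind Proposition \ref{pr:chern} is used. Everything else --- the pseudoconvexity of $\partial D$, the Gram-matrix structure of the two geodesic matrices, and the positivity of $d\sigma$ --- is elementary, so the substance of the corollary is already carried by Theorem \ref{th:CF}'s identification of the boundary contribution as a pairing against the Levi form.
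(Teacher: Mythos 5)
Your proposal is correct and follows essentially the same route as the paper: regularize the metric by $h_\varepsilon=h\,e^{-\varepsilon(\rho+A|t|^2)}$ so that the curvature hypothesis of Theorem \ref{th:CF} holds near $\pi^{-1}$ of a small neighborhood of $t_0$, observe that the boundary term (via Stein-ness of $D$ and the Levi form), the term $\sum(c_{j\bar k}(h_\varepsilon)u_j,u_k)$, and the remainder $R$ are all non-negative, and then let $\varepsilon\to0$. The limiting step you flag as the only genuine obstacle is handled in the paper exactly as you anticipate, by the stability theory of Hamilton underlying Proposition \ref{pr:chern}, which gives continuity of the curvature pairing in the metric.
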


Another very useful notion of positivity is the Griffiths positivity. Recall that $\mathcal H$ is said to be Griffiths semi-positive if $\sum(\Theta_{j\bar k} u, u) \xi_j\bar\xi_k \geq 0$, for every smooth section $u$ of $\mathcal H$ and every $\xi\in\C^m$. It is known that a \emph{finite rank} vector bundle is Griffiths semi-positive if and only if its dual bundle is Griffiths semi-negative. Moreover, the following is true:

\medskip

\textbf{Criterion for Griffiths semi-positivity}: A \emph{finite rank} vector bundle is Griffiths semi-positive if and only if the $\log$-norm of the holomorphic sections of its dual bundle are plurisubharmonic. 

\medskip

Then a natural question is whether there is a similar criterion in our case, i.e. for the infinite rank vector bundle $\mathcal H$. As a first step, we have to define the notion of the \emph{holomorphic section of the \textbf{dual} of $\mathcal H$.} 

\begin{definition}\label{de:smooth-dual-new} For each $t\in \mathbb B$, let $f^t$ be a $\C$-linear mapping from $\mathcal H_t$ to $\C$. We call $f: t \mapsto f^t$ a smooth section of the \textbf{dual} of $\mathcal H$ if there exists a smooth section, say $P(f)$, of $\mathcal H$ such that
\begin{equation}
f^t(u^t)=(u^t, P(f)^t),
\end{equation}
for every $u^t\in \mathcal H_t$ and every $t\in\mathbb B$. We shall write the norm of $f^t$ as $||f^t||:=||P(f)^t||$.
\end{definition}

\begin{definition}\label{de:holomorphic-dual-new} Let $f: t \mapsto f^t$ be a smooth section of the \textbf{dual} of $\mathcal H$. We call $f$ a holomorphic section if 
\begin{equation}
t \mapsto f^t(u^t)
\end{equation}
is holomorphic for every holomorphic section $u$ of $\mathcal H$.
\end{definition}

\textbf{Remark:} Inspired by \cite{BL14}, we shall give a careful study of those holomorphic sections of the dual of $\mathcal H$ defined by a family \emph{currents with compact support} in fibres. 

\medskip

Now we are ready to state the main application of our main theorem:

\begin{corollary}\label{co:dual-psh} Assume that $D$ satisfies $\mathbf{A1}$ and $\mathbf{A2}$. If $D$ is Stein and $i\Theta(\mathcal L,h)\geq 0$ on $D$ then
\begin{equation}
\log||f||:  t \mapsto \log ||f^t||
\end{equation} 
is plurisubharmonic for every holomorphic section $f$ of the dual of $\mathcal H$.
\end{corollary}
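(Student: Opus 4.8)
The plan is to deduce Corollary \ref{co:dual-psh} from the Main Theorem (Theorem \ref{th:CF}) by the standard duality argument relating Griffiths semi-positivity of a bundle to the plurisubharmonicity of log-norms of holomorphic sections of its dual — the same mechanism that works for finite-rank bundles, adapted to the infinite-rank setting of $\mathcal{H}$. First I would note that the hypotheses ($D$ Stein, $i\Theta(\mathcal L,h)\ge 0$ on $D$) are exactly those of Corollary \ref{co:nakano}, so $\mathcal{H}$ is Nakano semi-positive; in particular it is Griffiths semi-positive, i.e. $\sum_{j,k}(\Theta_{j\bar k}u,u)\,\xi_j\bar\xi_k \ge 0$ for every smooth section $u$ and every $\xi\in\mathbb{C}^m$. (Here I should check that the Stein hypothesis makes the geodesic boundary term and the line-bundle geodesic term in \eqref{eq:final-CF} non-negative, which is what Corollary \ref{co:nakano} already records; for the dual-section application I only need the Griffiths version, so it suffices to test with a single section.)

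Next, fix a holomorphic section $f$ of the dual of $\mathcal{H}$, with representing section $P(f)\in\Gamma(\mathcal{H})$ so that $f^t(u^t)=(u^t,P(f)^t)$ and $\|f^t\|=\|P(f)^t\|$. The function to study is $\psi(t):=\log\|P(f)^t\|^2$. The key computation is the Hessian of $\log\|P(f)^t\|^2$ in $t$. Writing $v:=P(f)$ and using the Chern connection $D_{t^j}$ and the curvature $\Theta_{j\bar k}$, a Bochner–Kodaira type identity gives
\begin{equation*}
\partial_{t^j}\dbar_{t^k}\log\|v\|^2 = \frac{-(\Theta_{j\bar k}v,v)}{\|v\|^2} + \frac{(D_{t^j}v,v)(\dbar_{t^k}\text{-dual})}{\|v\|^4}\ \text{-type terms},
\end{equation*}
so that, after contracting with $\xi\bar\xi$ and applying Cauchy–Schwarz to the first-order terms, one gets the pointwise inequality
\begin{equation*}
\sum_{j,k}\partial_{t^j}\dbar_{t^k}\log\|v\|^2\,\xi_j\bar\xi_k \ \ge\ -\frac{1}{\|v\|^2}\sum_{j,k}(\Theta_{j\bar k}v,v)\,\xi_j\bar\xi_k .
\end{equation*}
The crucial input is that $v=P(f)$ is not an arbitrary section: because $f$ is a \emph{holomorphic} section of the dual, $t\mapsto f^t(u^t)=(u^t,v^t)$ is holomorphic for every holomorphic section $u$ of $\mathcal{H}$, which forces $D_{t^j}v$ to be orthogonal to all holomorphic sections, hence (by the density/solvability properties of $\mathcal{H}$ coming from $\mathbf{A1}$, $\mathbf{A2}$ and the Stein hypothesis) $D_{t^j}v\perp\mathcal{H}_t^{\mathrm{hol}}$; combined with Griffiths semi-positivity $\sum(\Theta_{j\bar k}v,v)\xi_j\bar\xi_k\ge 0$ this yields $\sum_{j,k}\partial_{t^j}\dbar_{t^k}\psi\,\xi_j\bar\xi_k\ge 0$, i.e. $\psi$ is plurisubharmonic, and therefore so is $\log\|f\|=\tfrac12\psi$.

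I expect two points to be the main obstacles. The first, and genuinely delicate, one is justifying differentiation under the various integral signs and the Bochner–Kodaira manipulation in the \emph{infinite-dimensional} bundle $\mathcal{H}$: elements of $\mathcal{H}_t$ need not be smooth up to the boundary, so I must work with the section $P(f)$ which by Definition \ref{de:smooth-dual-new} is in $\Gamma(\mathcal{H})$ (smooth up to the boundary), and I must make sure the curvature operator $\Theta_{j\bar k}$, defined in \eqref{eq:new-1} only on such sections, is what actually appears — this is precisely why the definitions in \S2.2 were set up as they were. The second obstacle is the characterization $D_{t^j}P(f)\perp\mathcal{H}_t^{\mathrm{hol}}$: I would derive it by differentiating $t\mapsto(u^t,P(f)^t)$ for holomorphic $u$, using $\dbar_{t^k}u=0$ and the defining property \eqref{eq:chern-connection} of the Chern connection to get $\partial_{t^j}(u,P(f))=(u,\dbar_{t^j}P(f))+(D_{t^j}u, P(f))$ — wait, more carefully $\partial_{t^j}\overline{(P(f),u)} = \overline{(D_{t^j}P(f),u)} + \overline{(P(f),\dbar_{t^j}u)}$, and holomorphy of $f^t(u^t)$ together with $\dbar_{t^j}u=0$ kills the appropriate pieces, leaving $(D_{t^j}P(f),u)=0$ for all holomorphic $u$. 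Once these two technical points are in place, the plurisubharmonicity of $\log\|f\|$ follows from the Griffiths semi-positivity supplied by Corollary \ref{co:nakano} exactly as in the finite-rank criterion quoted before Definition \ref{de:smooth-dual-new}.
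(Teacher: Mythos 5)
Your overall strategy coincides with the paper's: deduce the corollary from Corollary \ref{co:nakano} by exploiting that the representing section $v=P(f)$ of a holomorphic dual section satisfies a first-order equation, and then compute the complex Hessian of $\log\|v\|^2$. But the central step, as you wrote it, fails because of a sign. The identity you display, $\partial_{t^j}\dbar_{t^k}\log\|v\|^2=-(\Theta_{j\bar k}v,v)/\|v\|^2+\cdots$, and the resulting inequality $\sum\partial_{t^j}\dbar_{t^k}\log\|v\|^2\,\xi_j\bar\xi_k\geq -\|v\|^{-2}\sum(\Theta_{j\bar k}v,v)\xi_j\bar\xi_k$ are the ones valid for a \emph{holomorphic} section of $\mathcal H$ (i.e. when $\dbar_{t^k}v=0$); combining them with Griffiths semi-positivity only bounds the Hessian from below by a non-positive quantity, which is a non sequitur and does not yield plurisubharmonicity. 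The correct mechanism for a holomorphic section of the dual has the curvature term with a plus sign, and it hinges on the stronger fact $D_{t^j}v\equiv 0$, not merely orthogonality of $D_{t^j}v$ to (values of) holomorphic sections. Since $D_{t^j}v\in\mathcal H_t$ and, by Lemma \ref{le:extension}, the values at $t$ of holomorphic sections of $\mathcal H$ are dense in $\mathcal H_t$, the orthogonality you derive does upgrade to $D_{t^j}P(f)\equiv 0$; you must state this, because with orthogonality alone the Hessian of $\|v\|^2$ contains the extra terms $(D_{t^j}v,D_{t^k}v)$ and $(v,\dbar_{t^j}D_{t^k}v)$ that your formula silently drops.

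With $D_{t^j}v\equiv 0$ the computation is exactly the paper's: differentiating \eqref{eq:chern-connection} and using \eqref{eq:new-1} gives $\partial_{t^j}\dbar_{t^k}\|v\|^2=(\Theta_{j\bar k}v,v)+(\dbar_{t^k}v,\dbar_{t^j}v)$, with a plus sign in front of the curvature (consistent with the dual bundle having curvature $-\Theta^{T}$, so positivity of $\mathcal H$ is precisely what makes the log-norms of its dual holomorphic sections plurisubharmonic). Then, for $\xi\in\C^m$ and $w:=\sum\bar\xi_k\dbar_{t^k}v$, one gets $\sum\xi_j\bar\xi_k\,\partial_{t^j}\dbar_{t^k}\log\|v\|^2=\|v\|^{-2}\sum\xi_j\bar\xi_k(\Theta_{j\bar k}v,v)+\|v\|^{-2}\|w\|^2-\|v\|^{-4}|(v,w)|^2\geq 0$ on $\{\|v\|>0\}$ by Corollary \ref{co:nakano} and Cauchy--Schwarz; smoothness of $t\mapsto\|P(f)^t\|$ then gives plurisubharmonicity on all of $\mathbb B$ (the locus $\|f^t\|=0$, which you do not address, is handled this way in the paper). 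So the route is the right one, but you need to fix the sign in the Bochner-type identity and to promote the orthogonality to the vanishing $D_{t^j}P(f)\equiv 0$ before the argument closes.
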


\textbf{Remark:} If we choose $f^t$ as a  fixed Dirac measure then we get the plurisubharmonicity of the Bergman kernel \cite{Bern06}. In section 5, we shall also use Corollary \ref{co:dual-psh} to study variation of the deriatives of the Bergman kernel.

\medskip

\textbf{Triviality and flatness:} In case every fibre $D_t$ is compact without boundary, we know that the criterions for flatness of $\mathcal H$ are quite usefull in the study of the uniqueness problems of extremal K\"ahler metrics (see \cite{Bern09a} and \cite{Bern14}). In our case, we call $\mathcal H$ a \emph{flat bundle} if 
\begin{equation}
\Theta_{j\bar k} u \equiv 0, \ \forall \ 1\leq j,k\leq m.
\end{equation}
for every smooth section $u$ of $\mathcal H$. From Theorem 1.4, one may guess that flatness of $\mathcal H$ should be related to triviality of the fibration $\pi$. Let us introduce the following definition as a generalization of Definition 1.3.

\begin{definition}\label{de:trivial} Assume that $D$ satisfies $\mathbf{A1}$ and $\mathbf{A2}$. We call $\{D_t\}_{t\in\mathbb B}$ a trivial family, or $D$ is trivial, if there exists a biholomorphic mapping $\Phi: D_0\times \mathbb B \to D$ such that 
\begin{equation}
\Phi(D_0\times\{t\})=D_t, \ \forall \ t\in\mathbb B,
\end{equation}
and $\Phi_{*}(\partial/\partial t^j)$ extends to a smooth $(1,0)$-vector field on $\mathcal X$ for every $1\leq j\leq m$.
\end{definition}

The following theorem can be seen as a generalization of Proposition 1.5:

\begin{theorem}\label{th:flat} Assume that $D$ satisfies $\mathbf{A1}$ and $\mathbf{A2}$. Assume further that the total space $D$ is Stein. If $K_{\mathcal X/\mathbb B}+\mathcal L$ is trivial on each fibre of $\pi$ and $\Theta(\mathcal L,h)\equiv 0$ on $D$ then flatness of $\mathcal H$ and triviality of $D$ are equivalent.
\end{theorem}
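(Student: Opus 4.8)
The plan is to prove the two implications of the equivalence separately, using the curvature formula (Theorem \ref{th:CF}) as the main engine in one direction and a direct construction in the other.

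\emph{Triviality implies flatness.} Suppose $D$ is trivial via $\Phi: D_0\times\mathbb B\to D$ with $\Phi_*(\partial/\partial t^j)$ extending smoothly to $\mathcal X$. Then $\pi\circ\Phi$ is the projection to $\mathbb B$, so $\Phi$ identifies the fibration with a product; under this identification the vector fields $\partial/\partial t^j$ are genuine (globally defined) $(1,0)$-vector fields on $D$, tangent to $\partial D$ (since $\Phi$ maps $D_0\times\{t\}$ onto $D_t$), and in particular $\dbar(\partial/\partial t^j)=0$ so the Kodaira--Spencer-type term $b$ vanishes. Since $\Theta(\mathcal L,h)\equiv 0$, the term $c$ in \eqref{eq:cb} vanishes as well, hence $R\equiv 0$ and $c_{j\bar k}(h)\equiv 0$. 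It remains to see that the boundary term vanishes: one should check that with the product structure and with $\rho$ chosen as the pull-back of a fibrewise defining function one can arrange $\theta_{j\bar k}(\rho)\equiv 0$; more robustly, flatness of $\mathcal H$ is intrinsic (independent of $\rho$), so it suffices to exhibit \emph{one} admissible $\rho$ for which all three correction terms on the right side of \eqref{eq:final-CF} vanish. With the product structure $\Phi$ this is straightforward: pick $\rho$ independent of $t$, so that $V_j^\rho=\partial/\partial t^j$ is already tangent and horizontal, giving $\theta_{j\bar k}(\rho)\equiv 0$. Then \eqref{eq:final-CF} forces $\sum(\Theta_{j\bar k}u_j,u_k)\equiv 0$ for all smooth sections, and a polarization argument over all choices of $u_1,\dots,u_m$ upgrades this to $\Theta_{j\bar k}u\equiv 0$ for every $u$, i.e. $\mathcal H$ is flat.

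\emph{Flatness implies triviality.} This is the substantive direction. Assume $\mathcal H$ is flat. Since $D$ is Stein and $K_{\mathcal X/\mathbb B}+\mathcal L$ is fibrewise trivial, each $\mathcal H_t$ contains a distinguished nowhere-vanishing element: a trivialization of $K_{D_t}+L_t$ gives, after $L^2$-normalization, a holomorphic family of such trivializing sections — here one uses the Ohsawa--Takegoshi-type extension as in the discussion preceding Theorem \ref{th:CF-11} to know the relevant direct image is locally free and that these sections patch into a holomorphic section $s$ of $\mathcal H$ (shrinking $\mathbb B$ if necessary). Flatness means the Chern connection has trivial curvature, so $\mathcal H$ is (locally, hence on the ball) a flat Hilbert bundle; parallel transport is well defined and unitary. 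The key point is that the distinguished trivializing section $s$ must be parallel: differentiate $D_{t^j}s$ and use that $s^t$ spans the (one-dimensional-over-$\mathcal O$) sheaf of fibrewise trivializations together with flatness to conclude $D_{t^j}s=0$, i.e. $s$ is a flat holomorphic section with $\dbar s=0$. Now $s$ being a global nowhere-zero holomorphic $(n,0)$-form-section trivializing $K_{\mathcal X/\mathbb B}+\mathcal L$ fibrewise and flat gives a canonical way to transport the fibre $D_0$: one builds the biholomorphism $\Phi$ by integrating the horizontal lift associated to $s$ (equivalently, the vector fields $V_j$ of Lemma \ref{le:key-lemma} become holomorphic and tangent to $\partial D$ when all correction terms vanish), and the flow of these commuting holomorphic vector fields — they commute because the curvature $[V_j,V_k]$-type bracket is governed by $\Theta_{j\bar k}$, which is zero — produces $\Phi: D_0\times\mathbb B\to D$ with $\Phi(D_0\times\{t\})=D_t$ and $\Phi_*(\partial/\partial t^j)=V_j$ smooth on $\mathcal X$.

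\emph{Main obstacle.} The hard part is the flatness-implies-triviality direction, specifically showing that the vanishing of the \emph{global} curvature operator $\Theta_{j\bar k}$ forces the horizontal lifts $V_j$ to be holomorphic vector fields (not merely smooth) that are tangent to $\partial D$ and that pairwise commute, so that their joint flow integrates to a biholomorphism. Morally this is because in \eqref{eq:final-CF} flatness must kill each of the three non-negative/geometric terms separately — the boundary integral of $\theta_{j\bar k}(\rho)$, the term $(c_{j\bar k}(h)u_j,u_k)$, and $R$ — for all test sections $u_j$; from $R\equiv 0$ and the structure of the H\"ormander remaining term one extracts that $\partial^t_\phi b + c$ is $\dbar^t$-closed and $L^2$-minimally solved by zero, forcing $\dbar V_j=0$ on each fibre and $V_j\,\lrcorner\,\Theta(\mathcal L,h)=0$; from the boundary term one gets tangency of $V_j$ to $\partial D$ via Lemma \ref{le:key-lemma}. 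Turning these fibrewise vanishings into the global statement that $V_j$ is holomorphic on $D$ and that the $V_j$ commute, then invoking the standard ODE/flow argument over the (simply connected) ball $\mathbb B$ to produce $\Phi$, is where the real work lies; care is also needed to ensure $\Phi$ and $\Phi^{-1}$ extend smoothly up to the boundary, which again rests on Lemma \ref{le:key-lemma} and assumption $\mathbf{A2}$.
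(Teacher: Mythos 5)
Your \emph{triviality implies flatness} direction is correct and essentially the paper's argument: pass to the product presentation, choose a $t$-independent defining function so that the lifts of Lemma \ref{le:key-lemma} become the coordinate fields $\partial/\partial t^j$, observe that all three terms on the right-hand side of \eqref{eq:final-CF} vanish, and polarize. The problems are in the converse, which is where the substance of Theorem \ref{th:flat} lies. Already your first step there is unfounded: there is no ``distinguished'' trivializing element of $\mathcal H_t$ (a fibrewise trivialization of $K_{D_t}+L_t$ is only determined up to a nowhere vanishing holomorphic function on the Stein fibre, and $L^2$-normalization does not remove this ambiguity), $\mathcal H_t$ is infinite dimensional so the Ohsawa--Takegoshi local-freeness discussion for proper fibrations does not apply, nothing forces $D_{t^j}s=0$ for any choice of $s$, and parallel transport for this Hilbert field is not available in the paper's framework (the connection acts only on sections smooth up to the boundary).

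The more serious gap is your claim that $R\equiv 0$ forces $\dbar V_j=0$ on each fibre, i.e.\ that the horizontal lifts of Lemma \ref{le:key-lemma} ``become holomorphic''. This is false: by (the proof of) Lemma \ref{le:R-expression}, $R\equiv 0$ only says that the harmonic part $\mathbb H\bigl(\sum(\dbar V_j)|_{D_t}\,\lrcorner\,u_j\bigr)$ vanishes, i.e.\ $b$ lies in the closure of ${\rm Im}\,(\partial^t_\phi)^*$; it does not make $b$ vanish, and in general the $V_j$ are not holomorphic. The paper's Lemma \ref{le:equality1} is designed precisely to bridge this: one solves $\dbar^t u^t=(\dbar V_j)|_{D_t}\,\lrcorner\,e$ with $u^t=0$ on $\partial D_t$, using completeness and Gromov $d$-boundedness of $\omega^t=i\partial\dbar(-\log-\rho)|_{D_t}$ together with boundary regularity of a Dirichlet problem, and then uses the fibrewise trivializing section $e$ of $K_{\mathcal X/\mathbb B}+\mathcal L$ to write $u^t=V\,\lrcorner\,e$, so that $\tilde V_j:=V_j-V$ is a fibrewise holomorphic field agreeing with $V_j$ on $\partial D$; this is where the hypothesis on $K_{\mathcal X/\mathbb B}+\mathcal L$ really enters, not through a section of $\mathcal H$. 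Finally, commutation and holomorphicity in $t$ are not ``governed by $\Theta_{j\bar k}$'' (the curvature of the Hilbert field is not the Lie bracket of the lifts): the paper first extracts $\theta_{j\bar k}(\rho)\equiv 0$ on $\partial D$ from flatness, using Steinness to know that the boundary term and $R$ in \eqref{eq:final-CF} are separately nonnegative (note that tangency of $V_j$ to $\partial D$, which you attribute to this step, is automatic from Lemma \ref{le:key-lemma}); it then applies Lemma \ref{le:geodesic} with $\psi=-\log(-\rho)$, exploiting that $\psi^{\bar\lambda\nu}$ degenerates at the boundary, to get $[V_j,V_k]=0$ on $D$ and $[V_j,\overline{V_k}]=0$ on $\partial D$, and concludes with a type decomposition plus the maximum principle on fibres that the $\tilde V_j$ are holomorphic on $D$ and commute, after which the flow argument you sketch does produce $\Phi$. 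As written, your proposal asserts these key facts rather than proving them, so the flatness-implies-triviality direction is not established.
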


As a direct corollary of Theorem \ref{th:flat}, we have

\begin{corollary}\label{co:last} Let $D_0$ be a smooth domain in $\C$. Let 
\begin{equation}
F:(t,z)\mapsto(t,z+a(t)\bar z)
\end{equation}
be a mapping from $\mathbb B\times D_0$ to $\mathbb B\times \C$. Assume that $a$ is holomorphic on $\mathbb B$ and
\begin{equation}
|a| <1, \ \text{on}\ \mathbb B, \  a(0)=0.
\end{equation}
Then $\{F(\{t\}\times D_0)\}_{t\in\mathbb B}$ is a trivial family if and only if $a\equiv0$ on $\mathbb B$.
\end{corollary}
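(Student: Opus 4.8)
\textbf{Proof proposal for Corollary \ref{co:last}.}

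The plan is to apply Theorem \ref{th:flat} to the family $\{D_t\}_{t\in\mathbb B}$ with $D_t := F(\{t\}\times D_0)$, with $\mathcal L$ trivial (so $h$ flat) and $\mathcal X = \mathbb B \times \C$. First I would check the hypotheses. The map $F$ is, for each fixed $t$, the $\R$-linear self-map $z \mapsto z + a(t)\bar z$ of $\C$; since $|a(t)| < 1$, this is an orientation-preserving diffeomorphism of $\C$, so $D_t$ is a bounded (or at least smoothly bounded) domain diffeomorphic to $D_0$, and $\mathbf{A1}$ holds. For $\mathbf{A2}$ one transports a strictly subharmonic defining function $\rho_0$ of $D_0$ through $F$; the resulting $\rho$ is smooth on $\mathcal X$, and since $F$ is fibrewise a linear isomorphism of $\C$ with bounded inverse, $\rho|_{\pi^{-1}(t)}$ is still strictly subharmonic near $\overline{D_t}$ with non-vanishing gradient on $\partial D_t$ (one can even take $\rho_0 = |z|^2 - $ const near the boundary if $D_0$ is the disc, or argue in general that precomposition with a fixed-determinant real-linear map preserves strict subharmonicity up to a positive factor). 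The total space $D \subset \mathbb B \times \C$ is Stein since it is a domain in $\C^2$ that is, by the explicit form of $F$, pseudoconvex — indeed $D$ is the image of the pseudoconvex $\mathbb B\times D_0$ under a fibrewise biholomorphism in $t$ together with a fibrewise $\R$-linear map, so one checks the Levi form of $\rho$ directly. Finally $K_{\mathcal X/\mathbb B} + \mathcal L = K_{\mathcal X/\mathbb B}$ is trivial on each fibre $D_t \subset \C$, and $\Theta(\mathcal L,h)\equiv 0$, so Theorem \ref{th:flat} applies and flatness of $\mathcal H$ is equivalent to triviality of $D$.

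The ``if'' direction is immediate: if $a\equiv 0$ then $F = \mathrm{id}$ and $D = \mathbb B \times D_0$ is a product, hence trivial in the sense of Definition \ref{de:trivial} (take $\Phi = \mathrm{id}$). For the ``only if'' direction, suppose $\{F(\{t\}\times D_0)\}$ is trivial, so there is a biholomorphism $\Phi : D_0 \times \mathbb B \to D$ with $\Phi(D_0\times\{t\}) = D_t$ and $\Phi_*(\partial/\partial t^j)$ extending smoothly. The key point is that $\Psi := F^{-1}\circ \Phi$ is then a fibre-preserving \emph{smooth} map $D_0\times\mathbb B \to D_0 \times \mathbb B$ (fibre-preserving because both $F$ and $\Phi$ respect the fibration over $\mathbb B$), whose restriction $\psi_t := \Psi|_{D_0\times\{t\}}$ to each fibre is the composition of the biholomorphism $\Phi|_{D_0\times\{t\}} : D_0 \to D_t$ with $F_t^{-1} : D_t \to D_0$, $w \mapsto (w - a(t)\bar w)/(1 - |a(t)|^2)$, which is $\R$-linear in $w$ but \emph{antiholomorphic-plus-holomorphic}, not holomorphic unless $a(t) = 0$.

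So I would argue as follows. The map $\Phi : D_0\times\mathbb B \to D \subset \C\times\mathbb B$ is holomorphic; writing $\Phi(z,t) = (g(z,t), t)$ with $g$ holomorphic in $(z,t)$ and $z\mapsto g(z,t)$ a biholomorphism $D_0 \to D_t$. Composing with $F_t^{-1}$, the map $z \mapsto F_t^{-1}(g(z,t))$ is a diffeomorphism $D_0 \to D_0$, and by the chain rule its $\dbar_z$-derivative equals $-\tfrac{a(t)}{1-|a(t)|^2}\,\overline{\partial_z g(z,t)}$, which is identically zero on $D_0$ iff $a(t) = 0$ (since $\partial_z g \neq 0$, $g$ being a local biholomorphism). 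If $a(t_0) \neq 0$ for some $t_0$, this $\dbar$-derivative is nowhere zero on $D_0 \times \{t_0\}$, so $z\mapsto F_{t_0}^{-1}(g(z,t_0))$ is an orientation-reversing-contribution map — more precisely it is a smooth map whose differential is everywhere of the mixed form, and the point is to derive a contradiction with the hypothesis that $\Phi_*(\partial/\partial t^j)$ extends smoothly to $\mathcal X$ and, crucially, that $\Phi$ is \emph{holomorphic}. Here I expect the cleanest route is: since $g(\cdot, t) : D_0 \to D_t = F_t(D_0)$ is biholomorphic and $F_t$ is $\R$-linear with $\dbar$-part $a(t)\,\dbar$, the composition $F_t^{-1}\circ g(\cdot,t)$ has constant $\dbar/\partial$-dilatation ratio $\overline{a(t)}$ (a Beltrami-coefficient computation), but a diffeomorphism of $D_0$ arising this way, together with holomorphic dependence of $g$ on $t$ and the normalization $a(0) = 0$, forces $a \equiv 0$: indeed the real analyticity / holomorphicity in $t$ propagates the vanishing at $t=0$, OR one invokes the flatness criterion directly — triviality gives flatness of $\mathcal H$ by Theorem \ref{th:flat}, and then a direct computation of $\Theta_{1\bar 1}$ for this explicit family (using the Main Theorem with $\mathcal L$ flat, so $\sum(\Theta_{j\bar k}u_j,u_k) = \int_{\partial D_t}\theta_{j\bar k}(\rho)\langle u_j,u_k\rangle d\sigma + R$) shows the curvature vanishes only when the geodesic curvature of $\{\partial D_t\}$ and the Kodaira-Spencer contribution vanish, which happens precisely when $\dot a \equiv 0$, hence $a \equiv a(0) = 0$.

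\textbf{Main obstacle.} The genuine difficulty is the ``only if'' direction, and within it the step of extracting $a\equiv 0$ from triviality. The subtle point is that Definition \ref{de:trivial} allows an \emph{arbitrary} biholomorphism $\Phi$ of the product with $D$, not necessarily related to $F$, so one cannot just read off the Beltrami coefficient of $F$ — one must show that \emph{no} biholomorphic trivialization exists when $a\not\equiv 0$. I expect the resolution to go through the computation that for this family the Kodaira-Spencer class $\kappa(\partial/\partial t)$ is represented by $\dbar V_1$ where $V_1$ is the horizontal lift, and triviality is equivalent (by Theorem \ref{th:flat} and the vanishing of the right-hand side of the curvature formula, all three terms being non-negative) to this class being zero together with vanishing boundary geodesic curvature; a direct computation with the explicit $F$ — where the total space $D = \{(z,t) : F_t^{-1}(z) \in D_0\}$ and $\dot a \neq 0$ makes $\partial D$ genuinely move non-affinely — then yields the contradiction. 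Verifying that the three non-negative terms cannot cancel and pinning down exactly which one is nonzero for general $D_0$ is where the real work lies; for $D_0$ a disc it should be an explicit and short calculation, and the general case should reduce to it by a localization/uniformization argument near $\partial D_0$.
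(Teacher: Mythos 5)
You correctly reduce the corollary to the flatness criterion of Theorem \ref{th:flat} (and the ``if'' direction is indeed immediate), but the ``only if'' direction --- which is the whole content --- is not actually proved in your proposal, and the two routes you sketch for it both have problems. The Beltrami route cannot work as an obstruction argument: since each fibre $D_t=F_t(D_0)$ is a planar domain biholomorphic to $D_0$ and $H^1(D_t,T_{D_t})=0$, there is no fibrewise or naive cohomological contradiction in having a fibre-preserving biholomorphism $\Phi$ with $F_t^{-1}\circ g(\cdot,t)$ merely quasiconformal; quasiconformal self-identifications of $D_0$ exist in abundance, and ``holomorphicity in $t$ propagates the vanishing $a(0)=0$'' is not an argument. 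The only obstruction is the boundary-regularity requirement built into Definition \ref{de:trivial}, and it is detected analytically, not by a soft propagation principle. Your second route is the right one, but you leave precisely the decisive computation open (``where the real work lies''), and your expectations about it are off in two ways: the geodesic-curvature term cannot be the nonzero term, since the graph of a holomorphic motion has Levi-flat boundary so $\theta_{j\bar k}(\rho)\equiv 0$ for \emph{every} $a$ (and $c_{j\bar k}(h)\equiv 0$ as the metric is flat); and no localization/uniformization reduction to the disc is needed. The paper's proof is a short direct computation uniform in $D_0$: for a holomorphic motion one has $\sum(\Theta_{j\bar k}u_j,u_k)=\|\mathbb H(\sum(\dbar V_j^F)|_{D_t}\lrcorner u_j)\|^2$, with
\begin{equation*}
(\dbar V_j^F)|_{D_t}=\frac{(f_z)^2 J_j}{|f_z|^2(1-|J|^2)}\,d\bar\zeta\otimes\frac{\partial}{\partial\zeta},\qquad J=f_{\bar z}/f_z=a(t),
\end{equation*}
and flatness becomes the vanishing of $\int_{D_t}K^t(\zeta,\bar\eta)\,\frac{a_j}{1-|a|^2}\,i\,d\zeta\wedge d\bar\zeta$ for all $\eta$; the reproducing property gives $\int_{D_t}K^t(\zeta,\bar\eta)\,i\,d\zeta\wedge d\bar\zeta\equiv 1$, so flatness is equivalent to $a_j\equiv 0$ for all $j$, i.e.\ $a$ constant, and $a(0)=0$ forces $a\equiv 0$. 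This is the step your proposal defers, so as it stands the ``only if'' direction is a genuine gap.

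A secondary, repairable inaccuracy: in checking $\mathbf{A2}$ you assert that precomposing a strictly subharmonic defining function with the real-linear map $z\mapsto z+a\bar z$ preserves strict subharmonicity ``up to a positive factor''. It does not: one computes $\partial_z\partial_{\bar z}\,\rho_0(z+a\bar z)=(1+|a|^2)\rho_{0,w\bar w}+2\,\mathrm{Re}\,(a\,\rho_{0,ww})$, which can be negative if $|\rho_{0,ww}|$ is large compared with $\rho_{0,w\bar w}$. One should instead produce the strictly fibrewise-subharmonic defining function directly (for smoothly bounded planar fibres, e.g.\ replace a transported defining function $\rho$ by $e^{A\rho}-1$ with $A$ large, locally uniformly in $t$), which is harmless but needs to be said correctly.
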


\textbf{Remark:} One may also give a direct proof of Corollary \ref{co:last} by introducing the notion of Kodaira-Spencer class for deformations with boundary, we leave it to the interested reader.

\section{Geodesic curvature and interpolation family}

In this section, we shall give two proofs of Lemma \ref{le:key-lemma} and show that our Definition of interpolation family (see Definition \ref{de:inter-psc}) is  compatible with the usual definition of interpolation family of Hermitian norms on $\mathbb C^n$ for $n\geq 1$.

\medskip

\textbf{Relation with Levi-flatness}:  By the definition of the geodesic curvature $\theta_{j\bar k}(\rho)$ (see Definition \ref{de:geodesic-c-bdy}) of $\{\partial D_t\}$, we have:   

\medskip

\emph{ Assume that every fibre $D_t$ is one dimensional. Then $\{D_t\}$ is an interpolation family if and only if the boundary of $D$ is Levi flat.}  

\medskip

For higher fibre dimension case, the criterion for interpolation family (see \cite{Semmes88} and references therein) is not so obvious. We will give a study of it in section 3.3. Let us prove our Key Lemma first.

\subsection{First proof of Lemma \ref{le:key-lemma}}\ \

\medskip

Since $V_j$ is a lift of $\partial/\partial t^j$, locally one may write,
\begin{equation}\label{eq:vj-1}
    V_j=\partial/\partial t^j-\sum v_j^\lambda\partial/\partial \mu^\lambda.
\end{equation}
Put $\psi=-\log-\rho$. By definition, we know that each $V_j$ is determined by
\begin{equation*}
    \langle V_j, \partial/\partial \mu^\nu\rangle_{i\partial\dbar\psi}\equiv 0, \ \text{on} \ D, \ \forall \ 1\leq \nu\leq n,
\end{equation*}
thus
\begin{equation}\label{eq:vj-2}
    v_j^\lambda=\sum\psi_{j\bar\nu}\psi^{\bar\nu\lambda}, \ \text{on} \ D.
\end{equation}

\textbf{Fibre dimension one case}: If $n=1$, by direct computation, we have
\begin{equation}\label{eq:keylemma}
    V_j:=\frac{\partial}{\partial t^j}-\frac{\rho_j\rho_{\bar\mu}-\rho\rho_{j\bar\mu}}{|\rho_\mu|^2-\rho\rho_{\mu\bar\mu}}
    \frac{\partial}{\partial \mu}.
\end{equation}
By our assumption $\mathbf{A2}$, $\rho_\mu$ has no zero point near the boundary and $\rho_{\mu\bar\mu} >0$ near the boundary, thus $V_j$ is smooth up to the boundary of $D$. Furthermore, \eqref{eq:keylemma} implies that $V_j(\rho)=0$ on $\{\rho=0\}$. Notice that, in case $n=1$, every tangent vector field on $\partial D$ is horizontal with respect to the Levi-form on $\partial D$. Thus we know that if $n=1$ then  $V_j|_{\partial D}$ is the horizontal lift of $\partial/\partial t^j$ with respect to the Levi-form.  

\medskip

\textbf{General case}: The general case can also be proved by direct computation (see the second proof below). But there is also a simple proof as follows: If $n\geq2$, fix $x_0\in\partial D_0$, then by our assumption $\mathbf{A2}$, $(\rho_{\lambda\bar \nu})(x_0)$ is a positive definite matrix, thus one may choose local coordinates around $x_0$ such that
\begin{equation*}
    \left(\rho_{\lambda\bar \nu}(x_0)\right)=I_n, \ \rho_\nu(x_0)=0, \ \forall \ \nu\geq2,
\end{equation*}
where $I_n$ is the identity matrix. Now we have
\begin{equation*}
    v_j^1(x_0)=\frac{\rho_j\rho_{\bar1}-\rho\rho_{j\bar1}}{|\rho_1|^2-\rho}(x_0)=\frac{\rho_j}{\rho_1}(x_0), \  v_j^\lambda(x_0)=\rho_{j\bar \lambda}(x_0), \ \forall \ \lambda\geq2.
\end{equation*}
By assumption $\mathbf{A2}$, we know that $\rho_1(x_0)\neq 0$, thus $V_j$ is smooth up to the boundary and
\begin{equation*}
    V_j(\rho)(x_0)=\rho_j(x_0)-\sum v_j^\lambda\rho_\lambda(x_0)=\rho_j(x_0)-\rho_j(x_0)=0.
\end{equation*}
Moreover, we have
\begin{equation*}
    \langle V_j, \partial/\partial \mu^\lambda \rangle_{i\partial\dbar\rho}(x_0)=\rho_{j\bar\lambda}(x_0)-v_j^{\lambda}(x_0)=0 , \ \forall \ \lambda\geq 2,
\end{equation*}
which implies that each $V_j$ is horizontal with respect to the Levi form. The proof is complete.

\subsection{Second proof of Lemma \ref{le:key-lemma}}\ \

\medskip

In this subsection, we will give an explicit formula for each $V_j$. Here we shall use some computations from \cite{Choi15}. Put 
\begin{equation*}
    \rho^\alpha=\sum \rho^{\bar\beta\alpha}\rho_{\bar\beta}, \ \ \ |\partial\rho|^2=\sum|\rho_{\alpha}|^2.
\end{equation*}
By \eqref{eq:vj-1} and \eqref{eq:vj-2}, we have
\begin{equation}\label{eq:vj-4}
    V_j=\partial/\partial t^j-\sum\psi_{j\bar\nu}\psi^{\bar\nu\lambda}  \partial/\partial \mu^\lambda,
\end{equation}
where $\psi=-\log-\rho$. One may verify that (see \cite{Choi15})
\begin{equation}\label{eq:vj-5}
    \psi^{\bar\alpha\beta}=(-\rho)\left(   \rho^{\bar\alpha\beta}+\frac{\rho^{\bar\alpha}\rho^{\beta}}{\rho-|\partial\rho|^2}
     \right).
\end{equation}
By direct computation, we have
\begin{equation}\label{eq:vj-6}
    \sum\psi_{j\bar\alpha}\psi^{\bar\alpha\beta}=-\frac{\rho^{\beta}\rho_{j}}{\rho-|\partial\rho|^2}+
    \sum \left(\rho^{\bar\alpha\beta}\rho_{j\bar\alpha}+
    \frac{\rho^{\bar\alpha}\rho^{\beta}\rho_{j\bar\alpha}}{\rho-|\partial\rho|^2}\right).
\end{equation}
From \eqref{eq:vj-6}, we know that each $V_j$ is smooth up to the boundary of $D$ and is tangent to the boundary of $D$. By a direct computation, we also have that each $V_j$ is horizontal with respect to the Levi-form of the boundary of $D$. Thus the proof of Lemma \ref{le:key-lemma} is complete.

\subsection{Geodesic curvature for $\{\partial D_t\}$}\ \

\medskip

Denote by $\hat V_j$ the horizontal lift of $\partial/\partial t^j$ with respect to $i\partial\dbar\rho$. By the proof of \eqref{eq:vj-2}, we have
\begin{equation}\label{eq:vj-7}
    \hat V_j= \frac{\partial}{\partial t^j}- \sum \rho_{j\bar\alpha}  \rho^{\bar\alpha\beta}\frac{\partial}{\partial \mu^{\beta}} .
\end{equation}
By \eqref{eq:vj-4} and \eqref{eq:vj-6} and a direct computation, we get
\begin{equation}\label{eq:vj-8}
    \theta_{j\bar k} (\rho)=\langle V_j, V_k\rangle_{i\partial\dbar\rho} =c_{j\bar k}(\rho)+\frac{|\partial \rho|^2 \hat V_j(\rho)\overline{\hat V_k(\rho)} }{(\rho-|\partial\rho|^2)^2},
\end{equation}
where
\begin{equation}\label{eq:vj-9}
    c_{j\bar k}(\rho):= \langle \hat V_j, \hat V_k\rangle_{i\partial\dbar\rho}.
\end{equation}
Thus we have:

\begin{proposition}\label{pr:theta-c} Let $\{D_t\}$ be a smooth family of smoothly bounded Stein domains. Then \begin{equation}
\sum \theta_{j\bar k}(\rho) \xi^j\bar\xi^k \geq \sum c_{j\bar k}(\rho)\xi^j\bar\xi^k, \ \ \forall \ \xi\in \C^m,
\end{equation}
on $\partial D$. Moreover, $\theta_{j\bar k}(\rho)\equiv c_{j\bar k}(\rho)$ if and only if each $\hat V_j$ is tangent to the boundary of $D$.
\end{proposition}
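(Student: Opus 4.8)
The plan is to read off both statements from the comparison identity \eqref{eq:vj-8}; the mechanism behind that identity is the mutual orthogonality of the two horizontal lifts, so I would organize the argument as follows. First I would observe that the Key Lemma field $V_j$ and $\hat V_j$ are both lifts of $\partial/\partial t^j$, hence $W_j := V_j - \hat V_j$ is tangent to the fibres. Since $\hat V_j$ is by construction $i\partial\dbar\rho$-orthogonal to every fibre-tangent $(1,0)$-vector field, expanding $\langle V_j, V_k\rangle_{i\partial\dbar\rho}$ kills the cross terms and leaves
\begin{equation*}
\langle V_j, V_k\rangle_{i\partial\dbar\rho} = c_{j\bar k}(\rho) + \langle W_j, W_k\rangle_{i\partial\dbar\rho}
\end{equation*}
in a neighbourhood of $\partial D$. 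Restricting to $\partial D$, where $\langle V_j, V_k\rangle_{i\partial\dbar\rho} = \langle V_j^\rho, V_k^\rho\rangle_{i\partial\dbar\rho} = \theta_{j\bar k}(\rho)$ by Lemma \ref{le:key-lemma}, and inserting the explicit boundary value $\langle W_j, W_k\rangle_{i\partial\dbar\rho} = \hat V_j(\rho)\,\overline{\hat V_k(\rho)}/|\partial\rho|^2$ on $\partial D$ — which is precisely \eqref{eq:vj-8} evaluated at $\{\rho=0\}$, legitimate since there $\rho - |\partial\rho|^2 = -|\partial\rho|^2 \neq 0$ by \textbf{A2} — I would obtain $\theta_{j\bar k}(\rho) = c_{j\bar k}(\rho) + \hat V_j(\rho)\overline{\hat V_k(\rho)}/|\partial\rho|^2$ on $\partial D$.

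Granting this, the first assertion is the observation that for any $\xi \in \C^m$,
\begin{equation*}
\sum\theta_{j\bar k}(\rho)\xi^j\bar\xi^k - \sum c_{j\bar k}(\rho)\xi^j\bar\xi^k = \frac{1}{|\partial\rho|^2}\left|\sum_j \hat V_j(\rho)\xi^j\right|^2 \geq 0,
\end{equation*}
using $|\partial\rho|^2 > 0$ on $\partial D$. For the equality clause I would argue: the left-hand side vanishes for all $\xi$ at every boundary point if and only if $\hat V_j(\rho) \equiv 0$ on $\partial D$ for each $j$, and since $\rho$ is real this is equivalent to $\hat V_j$ being tangent to $\partial D$. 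For the converse direction I would note that if each $\hat V_j$ is tangent to $\partial D$, then $\hat V_j|_{\partial D}$ is a $(1,0)$-tangent lift of $\partial/\partial t^j$ that is horizontal with respect to the Levi form (being $i\partial\dbar\rho$-orthogonal to all fibre-tangent vectors, in particular to the fibre-tangent ones lying in $T^{1,0}\partial D$), hence coincides with $V_j|_{\partial D}$ by the uniqueness part of Lemma \ref{le:key-lemma}; thus $W_j|_{\partial D} = 0$ and the correction term disappears, giving $\theta_{j\bar k}(\rho) \equiv c_{j\bar k}(\rho)$.

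I expect the only real work to be the explicit evaluation of $\langle W_j, W_k\rangle_{i\partial\dbar\rho}$ on the boundary, i.e. the passage from \eqref{eq:vj-4}--\eqref{eq:vj-6} to \eqref{eq:vj-8}: substitute \eqref{eq:vj-5} and \eqref{eq:vj-6}, then separate the terms carrying a factor $\rho^\beta$ (which assemble into the rank-one correction) from those of the form $\rho^{\bar\alpha\beta}\rho_{j\bar\alpha}$ (which reconstitute $c_{j\bar k}(\rho)$ via \eqref{eq:vj-7}). This is a bookkeeping calculation with no conceptual hurdle, and it is already recorded in the excerpt as \eqref{eq:vj-8}; the proposition then drops out by restriction to $\{\rho=0\}$ together with the elementary positivity remarks above. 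The single point that requires care rather than computation is the converse of the equality statement, where one must invoke uniqueness of the Levi-horizontal lift instead of arguing directly.
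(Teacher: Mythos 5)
Your proposal is correct and follows essentially the same route as the paper: the paper's proof is exactly the identity \eqref{eq:vj-8} (obtained from \eqref{eq:vj-4}--\eqref{eq:vj-7} by direct computation), restricted to $\{\rho=0\}$, from which the inequality and the equality criterion $\hat V_j(\rho)\equiv 0$ on $\partial D$ drop out just as you describe. Your packaging of the computation as the $i\partial\dbar\rho$-orthogonal decomposition $V_j=\hat V_j+W_j$ with $W_j$ vertical is a tidy way of organizing the same calculation, and your uniqueness-based converse is a harmless (if redundant) alternative to reading tangency directly off the rank-one term.
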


\subsection{Relation with interpolation of norms}\ \

\medskip

Let $h$ be a smooth Hermitian norm on the trivial vector bundle $\mathbb B\times \mathbb C^n$. Then for each $t\in\mathbb B$, $h^t:=h|_{t\times\mathbb C^n}$ defines a Hermitian norm on $\mathbb C^n$. It is known that $\{h^t\}$ defines an interpolation family if and only if the curvature of $h$ vanishes identically on $\mathbb B$ (see Semmes \cite{Semmes88}). Denote by $N_t$ the unit ball in $\mathbb C^n$ defined by $h^t$. We shall prove that:

\begin{proposition}\label{pr:interpolation} $\{h^t\}$ defines an interpolation family if and only if the geodesic curvature of $\{\partial N_t\}$ vanishes identically.
\end{proposition}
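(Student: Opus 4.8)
The plan is to reduce the statement to the known characterization of interpolation families via vanishing curvature of $h$ (Semmes), using the explicit description of the horizontal lift $V_j$ and the geodesic curvature $\theta_{j\bar k}(\rho)$ from Lemma \ref{le:key-lemma} and equations \eqref{eq:vj-8}--\eqref{eq:vj-9}. First I would pick a natural defining function for the total space $N:=\bigcup_t\{t\}\times N_t$. Writing $h^t$ in a frame as $h^t(\zeta,\zeta)=\sum h_{\lambda\bar\nu}(t)\,\zeta^\lambda\bar\zeta^\nu$, the unit ball $N_t$ is $\{|\zeta|^2_{h^t}<1\}$, so a candidate is $\rho(t,\zeta)=\log|\zeta|^2_{h^t}=\log\big(\sum h_{\lambda\bar\nu}(t)\zeta^\lambda\bar\zeta^\nu\big)$, or $\rho=|\zeta|^2_{h^t}-1$; I would choose whichever makes the Levi form cleanest. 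With this choice, $i\partial\dbar\rho$ restricted to a fibre is (up to a conformal factor) the flat metric $\sum h_{\lambda\bar\nu}(t)\,d\zeta^\lambda\wedge d\bar\zeta^\nu$, and the mixed derivatives $\rho_{j\bar\nu}$, $\rho_{j}$ encode exactly the variation of $h$ in $t$, i.e. the Chern connection and curvature of $(\mathbb B\times\mathbb C^n,h)$.

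The key computation is then to express $\theta_{j\bar k}(\rho)$ on $\partial N$ in terms of the curvature $\Theta_{j\bar k}(h)$ of the Hermitian bundle $h$. By \eqref{eq:vj-8} we have $\theta_{j\bar k}(\rho)=c_{j\bar k}(\rho)+\dfrac{|\partial\rho|^2\,\hat V_j(\rho)\overline{\hat V_k(\rho)}}{(\rho-|\partial\rho|^2)^2}$ on $\partial N$. I would first observe that for the logarithmic defining function $\rho=\log|\zeta|^2_{h^t}$ the horizontal lift $\hat V_j$ with respect to $i\partial\dbar\rho$ is tangent to $\partial N$ — this is a homogeneity/scaling phenomenon: $\partial N$ is the $h^t$-unit sphere and the relevant lift differentiates $h^t$ along the $t$-direction in a way compatible with the radial structure — so that the second term vanishes on $\partial N$ and $\theta_{j\bar k}(\rho)\equiv c_{j\bar k}(\rho)$ there (cf. Proposition \ref{pr:theta-c}). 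Then I would identify $c_{j\bar k}(\rho)=\langle\hat V_j,\hat V_k\rangle_{i\partial\dbar\rho}$: plugging \eqref{eq:vj-7} into $i\partial\dbar\rho$ and simplifying, the result is a pointwise expression on the fibre that is, up to a positive factor depending only on the conformal/radial normalization, the curvature form $\Theta_{j\bar k}(h)$ contracted against the position vector $\zeta$. The upshot is the pointwise identity $\theta_{j\bar k}(\rho)(t,\zeta)=(\text{positive factor})\cdot\big\langle \Theta_{j\bar k}(h)(t)\,\zeta,\zeta\big\rangle_{h^t}$ for $\zeta\in\partial N_t$.

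With that identity in hand the proposition follows in both directions. If the curvature of $h$ vanishes identically then the right-hand side is $0$, so $\theta_{j\bar k}(\rho)\equiv0$ on $\partial N$, i.e. $\{\partial N_t\}$ has vanishing geodesic curvature. Conversely, if $\theta_{j\bar k}(\rho)\equiv0$ on $\partial N$ then $\big\langle\Theta_{j\bar k}(h)(t)\zeta,\zeta\big\rangle_{h^t}=0$ for every $t$ and every $\zeta$ on the $h^t$-unit sphere; since this is a Hermitian (in $\zeta$) sesquilinear form — after also contracting against an arbitrary $\xi\in\mathbb C^m$ in the $j,k$ indices — vanishing on the sphere forces $\Theta_{j\bar k}(h)\equiv0$, hence $h$ has vanishing curvature. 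By Semmes' theorem this is equivalent to $\{h^t\}$ being an interpolation family, which completes the proof.

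\textbf{Main obstacle.} The routine-looking but genuinely delicate step is the explicit Levi-form computation showing $\hat V_j$ is tangent to $\partial N$ for the chosen defining function and, simultaneously, pinning down the exact positive normalization factor relating $c_{j\bar k}(\rho)$ to $\Theta_{j\bar k}(h)$; getting the defining function right (logarithmic versus affine, and the attendant conformal rescaling of the Levi form on each fibre) is what makes the boundary term collapse cleanly, and a poor choice would leave a nonzero cross term that obscures the equivalence. I would also need to be careful that ``interpolation family'' in Definition \ref{de:inter-psc} (vanishing of $\theta_{j\bar k}(\rho)$ for the specific $\rho$ of \textbf{A2}) is independent of the allowed choices of $\rho$, or else argue that the $\rho$ above is an admissible defining function in the sense of \textbf{A2} — which it is, since $N_t$ is a smoothly bounded strongly pseudoconvex (indeed strictly convex after the linear change of frame) domain.
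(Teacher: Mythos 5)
Your proposal follows essentially the same route as the paper: take $\rho(t,z)=h^t(z)-1$, observe $\hat V_j(\rho)\equiv 0$ so that by Proposition \ref{pr:theta-c} one gets $\theta_{j\bar k}(\rho)=c_{j\bar k}(\rho)$, compute via \eqref{eq:vj-9}--\eqref{eq:vj-10} that $\theta_{j\bar k}(\rho)=\sum\bigl(h_{\alpha\bar\beta,j\bar k}-\sum h_{\alpha\bar\lambda,j}h^{\bar\lambda\nu}h_{\nu\bar\beta,\bar k}\bigr)z^\alpha\bar z^\beta$ (with normalization factor exactly $1$, and vanishing of this Hermitian form on each unit sphere forcing the matrix to vanish), and conclude by Semmes' theorem. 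The only caution is that of your two candidate defining functions only the affine one is admissible: $\log|\zeta|^2_{h^t}$ is singular at the origin and its fibrewise Levi form is degenerate in the radial direction, so it does not satisfy \textbf{A2}, whereas with $\rho=h^t-1$ your computation is exactly the paper's.
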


\begin{proof} Let us write 
\begin{equation*}
   h^t(z)=\sum h_{\alpha\bar\beta}(t)z^\alpha\bar z^{\beta}.
\end{equation*}
By definition, 
\begin{equation*}
   \rho(t,z):=h^t(z)-1
\end{equation*}
is a defining function for $\{\partial N_t\}$. By direct computation, we have that $\hat V_j(\rho)$ vanishes identically. Thus by Proposition \ref{pr:theta-c}, the geodesic curvature, $\theta_{j\bar k}(\rho)$, of $\{\partial N_t\}$ is equal to $c_{j\bar k}(\rho)$. By \eqref{eq:vj-9}, 
\begin{equation}\label{eq:vj-10}
    c_{j\bar k}(\rho)= \langle \hat V_j, \hat V_k\rangle_{i\partial\dbar\rho}=\rho_{j\bar k}-\sum \rho_{j\bar\alpha}  \rho^{\bar\alpha\beta}\rho_{\bar k\beta},
\end{equation}
thus we have
\begin{equation}\label{eq:vj-11}
    \theta_{j\bar k}(\rho)= \sum \left(h_{\alpha\bar\beta,j\bar k}-\sum h_{\alpha\bar\lambda,j}h^{\bar\lambda\nu}h_{\nu\bar\beta,\bar k}      \right) z^{\alpha}\bar z^{\beta}.
\end{equation}
Thus $\theta_{j\bar k}(\rho)$ vanishes identically if and only if 
\begin{equation}\label{eq:vj-12}
h_{\alpha\bar\beta,j\bar k}-\sum h_{\alpha\bar\lambda,j}h^{\bar\lambda\nu}h_{\nu\bar\beta,\bar k}  \equiv 0, \ \ \text{on}\ \mathbb B.
\end{equation}
Notice that \eqref{eq:vj-12} is equivalent to that the curvature of $h$ vanishes identically. The proof is complete.
\end{proof}

\section{Curvature formula}

\subsection{Definition of the Chern connection}\ \

\medskip

By Definition \ref{de:cc}, it suffices to find a linear operator $D_{t^j}$ from $\Gamma (\mathcal H)$ to $\Gamma (\mathcal H)$ such that
\begin{equation}\label{eq:chern-connection-1}
    \partial_{t^j}(u,v)=(D_{t^j} u, v) + (u, \dbar_{t^j} v), \ \forall\ u, v\in \Gamma(\mathcal H),
\end{equation}
is true. By Definition \ref{de:re}, the left hand side of \eqref{eq:chern-connection-1} can be written as
\begin{equation}\label{eq:variation}
     \partial_{t^j} (\pi_*(c_n\{\textbf{u},\textbf{v}\})),
\end{equation}
where $\mathbf u, \mathbf v$ are arbitrary representatives (see Definition \ref{de:re}) of $u,v$ and $c_n= i^{n^2}$ such that $c_n\{\textbf{u},\textbf{u}\}$ is a positive $(n,n)$-form on the total space. 

Assume that $D$ satisfies $\mathbf{A1}$ and $\mathbf{A2}$. Let $V_j$ be the vector fields in Lemma \ref{le:key-lemma}. Since $V_j(\rho)=0$ on $\partial D$, by Corollary \ref{co:vfi}, we have
\begin{equation}\label{eq:Lie-first}
    \partial_{t^j} (\pi_*\{\textbf{u},\textbf{v}\})=\pi_*(L_{V_j}\{\textbf u, \textbf v\}).
\end{equation}
Let $d^{\mathcal L}$ be the Chern connection on $\mathcal L$. Then we have
\begin{equation*}
    d\{\textbf u, \textbf v\}=\{d^{\mathcal L} \textbf u, \textbf v\}+(-1)^n \{\textbf u,d^{\mathcal L} \textbf v\}.
\end{equation*}
Using Cartan's formula,
\begin{equation*}
L_{V_j}=d\delta_{V_j}+\delta_{V_j}d,
\end{equation*}
we get that
\begin{equation*}
    L_{V_j}\{\textbf u, \textbf v\}=\{L_j \textbf u, \textbf v\}+\{\textbf u,L_{\bar j}\textbf v\},
\end{equation*}
where
\begin{equation*}
    L_j :=d^{\mathcal L} \delta_{V_j}+\delta_{V_j}d^{\mathcal L},
\end{equation*}
and
\begin{equation*}
    L_{\bar j}:=d^{\mathcal L} \delta_{\bar V_j}+\delta_{\bar V_j}d^{\mathcal L}.
\end{equation*}
Since $ \textbf v$ is an $(n,0)$-form, we have
\begin{equation}\label{eq:chern01}
    L_{\bar j} \textbf v=\delta_{\bar V_j} \dbar \textbf v,
\end{equation}
By \eqref{eq:def-dbar25}, we know that $ L_{\bar j} \textbf v$ is a representative of  $\dbar_{t^j} v$. Thus we have
\begin{equation}\label{eq:chern-connection-11}
    \partial_{t^j}(u,v)=\pi_*(c_n\{L_j \textbf{u},\textbf{v}\}) + (u, \dbar_{t^j} v), \ \forall\ u, v\in \Gamma(\mathcal H).
\end{equation}
Notice that the $(n,0)$-part of $L_j \textbf  u$ can be written as
\begin{equation}\label{eq:(n,0)}
(\partial_\phi \delta_{V_j} +\delta_{V_j}\partial_{\phi}) \textbf  u,
\end{equation}
where $\partial_\phi$ denotes the $(1,0)$-component of $d^{\mathcal L}$. Thus we have
\begin{equation*}
\pi_*(c_n\{L_j \textbf{u},\textbf{v}\})(t) =\left(i_t^*(\partial_\phi \delta_{V_j} +\delta_{V_j}\partial_{\phi}) \textbf  u, v^t\right).
\end{equation*}
Our assumption $\mathbf{A2}$ implies that
\begin{equation}
\{v^t: v\in\Gamma(\mathcal H)\}.
\end{equation} 
is dense in the Hilbert space $\mathcal H_t$ (see the proof of Lemma \ref{le:extension} below). Thus there is a \textbf{unique} element, say $\sigma^t$, in $\mathcal H_t$ such that
\begin{equation}\label{eq:riesz-repre}
\left(i_t^*(\partial_\phi \delta_{V_j} +\delta_{V_j}\partial_{\phi}) \textbf  u, v^t\right)=(\sigma^t, v^t),
\end{equation}
which implies that there is a \textbf{unique} element, $\sigma^t$, in $\mathcal H_t$ such that
\begin{equation}\label{eq:chern-connection-111234}
    \partial_{t^j}(u,v)=(\sigma^t, v^t) + (u, \dbar_{t^j} v), \ \forall\ u, v\in \Gamma(\mathcal H).
\end{equation}
Thus by Definition \ref{de:cc}, we know that: $\mathcal H$ has a Chern connection if and only if 
\begin{equation*}
\sigma: t\to \sigma^t,
\end{equation*}
defines a smooth section of $\mathcal H$, i.e. 
\begin{equation*}
\sigma \in \Gamma(\mathcal H).
\end{equation*}
By \eqref{eq:riesz-repre}, $\sigma^t$ is the Bergman projection to $\mathcal H_t$ of $i_t^*((\partial_\phi \delta_{V_j} +\delta_{V_j}\partial_{\phi}) \textbf  u)$. Thus by Hamilton's theorem (see \cite{Hamilton77}, \cite{Hamilton79}, \cite{GreeneK82} or Appendix \ref{ss:SBK}), if $\{D_t\}$ is a smooth family of smoothly bounded Stein domains then $\sigma\in \Gamma(\mathcal H)$ and
\begin{equation}\label{eq:41000}
D_{t^j}u=\sigma.
\end{equation}
Thus we have:

\begin{proposition}\label{pr:chern} If $D$ satisfies $\mathbf{A1}$ and $\mathbf{A2}$ then the Chern connection is well defined on $\mathcal H$.
\end{proposition}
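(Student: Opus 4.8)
The plan is to verify that the candidate operator $D_{t^j}u := \sigma$, where $\sigma^t$ is defined by \eqref{eq:riesz-repre} as the Bergman projection onto $\mathcal H_t$ of the $(n,0)$-form $i_t^*\bigl((\partial_\phi \delta_{V_j} + \delta_{V_j}\partial_\phi)\mathbf u\bigr)$, satisfies the two required properties: first, that it indeed lands in $\Gamma(\mathcal H)$ (smoothness up to the boundary of the fibrewise Bergman projections), and second, that identity \eqref{eq:chern-connection-1} holds, i.e.\ $\partial_{t^j}(u,v) = (D_{t^j}u, v) + (u, \dbar_{t^j}v)$ for all $u,v \in \Gamma(\mathcal H)$. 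Most of the work toward the second identity is already laid out in the excerpt: one chooses representatives $\mathbf u, \mathbf v$ (which exist by the remark after Definition \ref{de:smooth}, and may be taken smooth on all of $\mathcal X$), writes the inner product as the pushforward $\pi_*(c_n\{\mathbf u, \mathbf v\})$, and differentiates in $t^j$. The key move is to express the $t^j$-derivative of the pushforward as a Lie derivative along the horizontal lift $V_j$ from Lemma \ref{le:key-lemma}: this is \eqref{eq:Lie-first}, and it is legitimate precisely because $V_j(\rho) = 0$ on $\partial D$ (so the flow of $V_j$ preserves $D$ to first order and no boundary contribution appears), invoking Corollary \ref{co:vfi}. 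Applying Cartan's formula $L_{V_j} = d\,\delta_{V_j} + \delta_{V_j}\,d$ together with the Leibniz rule for the Chern connection $d^{\mathcal L}$ splits $L_{V_j}\{\mathbf u, \mathbf v\}$ into $\{L_j \mathbf u, \mathbf v\} + \{\mathbf u, L_{\bar j}\mathbf v\}$; since $\mathbf v$ is of type $(n,0)$ the antiholomorphic piece reduces to $\delta_{\bar V_j}\dbar\mathbf v$, which by \eqref{eq:def-dbar25} represents $\dbar_{t^j}v$, giving the second term on the right. Taking the $(n,0)$-part of $L_j\mathbf u$ and passing to the Bergman projection then identifies the first term as $(\sigma^t, v^t)$, using that $\{v^t : v \in \Gamma(\mathcal H)\}$ is dense in $\mathcal H_t$ to guarantee uniqueness of $\sigma^t$.

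The remaining point — and the one I expect to be the actual substance of the proof — is showing $\sigma \in \Gamma(\mathcal H)$, i.e.\ that the family $t \mapsto \sigma^t$ of fibrewise Bergman projections is smooth up to the boundary in the sense of Definition \ref{de:smooth}. Here the input $i_t^*\bigl((\partial_\phi\delta_{V_j} + \delta_{V_j}\partial_\phi)\mathbf u\bigr)$ is itself a smooth (up to the boundary) family of $(n,0)$-forms on the $D_t$, since $V_j$ and $\mathbf u$ are smooth up to $\partial D$ by Lemma \ref{le:key-lemma} and Definition \ref{de:smooth}; what must be controlled is that the Bergman projection operator depends smoothly on the parameter $t$ and maps boundary-regular forms to boundary-regular forms. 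This is exactly the content of Hamilton's theorem on the smooth dependence of solutions of the $\dbar$-Neumann problem / Bergman projection on a smooth family of strongly pseudoconvex domains (\cite{Hamilton77}, \cite{Hamilton79}; see also \cite{GreeneK82} and Appendix \ref{ss:SBK}); assumptions $\mathbf{A1}$ and $\mathbf{A2}$ furnish precisely the hypotheses needed — properness of $\pi$ on $\overline D$, strong pseudoconvexity of each $D_t$ with a globally defined smooth defining function $\rho$, and (via the discussion in section 3.2) local smooth triviality of the family $\{D_t\}$. I would cite Hamilton's theorem in this form and note that it simultaneously yields the density statement used above (closure of $\{v^t : v \in \Gamma(\mathcal H)\}$ in $\mathcal H_t$, proved in the forthcoming Lemma \ref{le:extension}) via Ohsawa–Takegoshi-type or $\dbar$-solvability extension arguments on the strongly pseudoconvex fibre.

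Thus the proof reduces to three assembled ingredients: (i) the differentiation-under-the-pushforward identity \eqref{eq:Lie-first}, valid because $V_j$ is tangent to $\partial D$; (ii) the algebraic Cartan/Leibniz computation producing the splitting into the $D_{t^j}$-term and the $\dbar_{t^j}$-term; and (iii) Hamilton's regularity theorem, which promotes the pointwise-defined $\sigma^t$ to a genuine smooth section $\sigma \in \Gamma(\mathcal H)$ and hence certifies that $D_{t^j}: \Gamma(\mathcal H) \to \Gamma(\mathcal H)$ is a well-defined $\C$-linear operator satisfying \eqref{eq:chern-connection}. Linearity in $u$ is immediate from linearity of $\partial_\phi$, $\delta_{V_j}$, and the Bergman projection. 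By Definition \ref{de:cc} this establishes that the Chern connection is well defined on $\mathcal H$, completing the proof. The one genuine obstacle is ingredient (iii): everything else is either bookkeeping with Cartan calculus or an appeal to the already-established Lemma \ref{le:key-lemma}, whereas the boundary regularity and smooth parameter-dependence of the Bergman projection is a nontrivial analytic fact that the paper correctly isolates and defers to Hamilton's work.
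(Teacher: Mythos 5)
Your proposal is correct and follows essentially the same route as the paper: differentiate the pushforward via the tangential horizontal lift $V_j$ (Corollary \ref{co:vfi}), split with Cartan's formula so that the $\dbar_{t^j}$-term appears and the $(n,0)$-part $i_t^*\bigl((\partial_\phi\delta_{V_j}+\delta_{V_j}\partial_\phi)\mathbf u\bigr)$ is Bergman-projected to define $\sigma^t$, and then invoke Hamilton's regularity theorem (together with the density of $\{v^t: v\in\Gamma(\mathcal H)\}$ from Lemma \ref{le:extension}) to conclude $\sigma\in\Gamma(\mathcal H)$ and hence that $D_{t^j}u=\sigma$ satisfies \eqref{eq:chern-connection}. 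You also correctly identify Hamilton's theorem as the sole substantive analytic ingredient, exactly as the paper does.
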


Now we are ready to compute the curvature of the Chern connection of $\mathcal H$. First we shall show how to get a curvature formula for \textbf{holomorphic} sections of $\mathcal H$. 

\subsection{Curvature formula for holomorphic sections}\ \

\medskip

 Let $u_1, \cdots, u_m$ be holomorphic sections (see Definition \ref{de: holo-section-H}) of $\mathcal H$. By definition of the Chern connection and \eqref{eq:new-1}, we have
\begin{equation}\label{eq:curvature2}
   (\Theta_{j\bar k} u_j,u_k)=(D_{t^j}u_j,D_{t^k}u_k)- (u_j,u_k)_{j\bar k}. 
\end{equation}
By \eqref{eq:chern-connection-11}, we have
\begin{equation}\label{eq:jbark-11}
    (u_j,u_k)_{j\bar k}=\dbar_{t^k}\pi_*(c_n\{L_j \textbf u_j,\textbf u_k\})= \pi_*(c_n\{L_j \textbf u_j,L_k\textbf u_k\})+ \pi_*(c_n\{L_{\bar k} L_j \textbf u_j,\textbf u_k\}).
\end{equation}
Since each $u_j$ is a holomorphic section, we have $i_t^*(L_{\bar k} \textbf u_j)=(\dbar_{t^k}u_j)(t)\equiv 0$. Thus 
\begin{equation}\label{eq:jbark-1}
   \pi_*\{L_j L_{\bar k}  \textbf u_j,\textbf u_k\}=
    \partial_{t^j}   \pi_*\{L_{\bar k}\textbf u_j,\textbf u_k\}
    -\pi_*\{L_{\bar k}\textbf u_j,L_{\bar j}\textbf u_k\}\equiv 0,
\end{equation}
which implies that
\begin{equation}\label{eq:jbark}
    (u_j,u_k)_{j\bar k}=\pi_*(c_n\{L_j \textbf u_j,L_k\textbf u_k\}) -\pi_*(c_n\{[ L_j, L_{\bar k}]\textbf u_j, \textbf u_k\}).
\end{equation}
We shall use the following formula:

\begin{proposition}\label{pr:key} $ [L_j,L_{\bar k}]= d^{\mathcal L} \delta_{[V_j,\bar V_k]}+\delta_{[V_j,\bar V_k]} d^{\mathcal L}+ \langle V_j, V_k\rangle_{i\Theta(\mathcal L,h)}.$
\end{proposition}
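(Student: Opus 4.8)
The plan is to compute the commutator $[L_j, L_{\bar k}]$ directly from the definitions $L_j = d^{\mathcal L}\delta_{V_j} + \delta_{V_j} d^{\mathcal L}$ and $L_{\bar k} = d^{\mathcal L}\delta_{\bar V_k} + \delta_{\bar V_k} d^{\mathcal L}$, treating both as graded derivations of degree $0$ on $\mathcal L$-valued forms. The basic tool is the Cartan-type identity that $L_j$ is nothing but the Lie-derivative-like operator $L_{V_j}$ twisted by the connection $d^{\mathcal L}$; indeed on the underlying (untwisted) forms $L_{V_j} = d\delta_{V_j} + \delta_{V_j} d$ is the ordinary Lie derivative, and the commutator of two Lie derivatives is $L_{[V_j, \bar V_k]}$. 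So heuristically $[L_j, L_{\bar k}]$ should be $L_{[V_j,\bar V_k]} = d^{\mathcal L}\delta_{[V_j,\bar V_k]} + \delta_{[V_j,\bar V_k]}d^{\mathcal L}$ up to the failure of $d^{\mathcal L}$ to square to zero, and $(d^{\mathcal L})^2 = \Theta(\mathcal L, h)$ is exactly the curvature. The curvature contribution, after contracting with $V_j$ on one side and $\bar V_k$ on the other, produces the scalar $\delta_{V_j}\delta_{\bar V_k}\Theta(\mathcal L,h) = \langle V_j, V_k\rangle_{i\Theta(\mathcal L,h)}$ (with the sign/normalization conventions fixed so that $i\Theta = i\partial\bar\partial\phi$ pairs as stated).

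Concretely I would proceed as follows. First expand
\begin{equation*}
[L_j, L_{\bar k}] = \big(d^{\mathcal L}\delta_{V_j} + \delta_{V_j}d^{\mathcal L}\big)\big(d^{\mathcal L}\delta_{\bar V_k} + \delta_{\bar V_k}d^{\mathcal L}\big) - \big(d^{\mathcal L}\delta_{\bar V_k} + \delta_{\bar V_k}d^{\mathcal L}\big)\big(d^{\mathcal L}\delta_{V_j} + \delta_{V_j}d^{\mathcal L}\big).
\end{equation*}
Second, collect the eight resulting terms. The terms not involving $(d^{\mathcal L})^2$ reorganize, using the graded commutation rules $[\delta_{V_j}, \delta_{\bar V_k}] = 0$ (contractions by $(1,0)$ and $(0,1)$ fields anticommute, and squared contractions vanish) and the graded Jacobi identity, into $d^{\mathcal L}\delta_{[V_j,\bar V_k]} + \delta_{[V_j,\bar V_k]}d^{\mathcal L}$ — this is the standard derivation of $[L_V, L_W] = L_{[V,W]}$ rewritten with $d$ replaced by $d^{\mathcal L}$. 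Third, the remaining terms are precisely those where two copies of $d^{\mathcal L}$ collide; replacing $(d^{\mathcal L})^2$ by the curvature $2$-form $\Theta(\mathcal L,h)$ (which, being a $2$-form times the identity endomorphism, commutes with everything and contributes only through contractions), one is left with $\delta_{V_j}\big(\Theta(\mathcal L,h)\wedge \delta_{\bar V_k}(\cdot)\big) + (\text{terms assembling } \delta_{\bar V_k}\delta_{V_j}\Theta)$, and a bookkeeping of signs shows the net scalar multiplier is $\delta_{\bar V_k}\delta_{V_j}\Theta(\mathcal L,h) = \langle V_j, V_k\rangle_{i\Theta(\mathcal L,h)}$.

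The main obstacle is the sign and ordering bookkeeping in the graded-commutator expansion: $d^{\mathcal L}$ has degree $+1$, the contractions have degree $-1$, and $\mathcal L$-valued $(n,0)$-forms carry a Koszul sign $(-1)^n$ in the Leibniz rule for $\{\cdot,\cdot\}$ (already visible in the excerpt's formula $d\{\mathbf u,\mathbf v\} = \{d^{\mathcal L}\mathbf u,\mathbf v\} + (-1)^n\{\mathbf u, d^{\mathcal L}\mathbf v\}$), so one must be careful that the curvature term comes out with the correct sign to match the positivity statements used later. A clean way to sidestep most of this is to verify the identity by evaluating both sides on an arbitrary local section written in a frame where $d^{\mathcal L} = d - \partial\phi\wedge$, reducing everything to the ordinary Cartan calculus plus a single explicit computation of the $\partial\phi$-terms, whose antisymmetrization in $V_j, \bar V_k$ yields $\partial\bar\partial\phi(V_j, \bar V_k)$; I expect this to be the most efficient route and would present it that way rather than through the abstract graded-derivation manipulation.
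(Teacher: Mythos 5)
Your proposal is correct, and the route you say you would actually present (work in a local frame with $d^{\mathcal L}=d-\partial\phi\wedge$, reduce to ordinary Cartan calculus, and check that the leftover $\partial\phi$-terms antisymmetrize to $\partial\dbar\phi(V_j,\bar V_k)$) is essentially the paper's proof, which writes $L_j=L_{V_j}-V_j(\phi)$, $L_{\bar k}=L_{\bar V_k}$, gets $[L_j,L_{\bar k}]=L_{[V_j,\bar V_k]}+\bar V_kV_j(\phi)$, and then verifies $\delta_{[V_j,\bar V_k]}\partial\phi+\bar V_kV_j(\phi)=\langle V_j,V_k\rangle_{i\Theta(\mathcal L,h)}$ by direct computation. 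The abstract graded-commutator expansion you sketch first is the same computation in different clothing (note only that obtaining $\delta_{[V_j,\bar V_k]}$ uses the first-order terms as well, not merely a rearrangement of the curvature-free terms), so no genuinely new ingredient is involved.
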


\begin{proof} By definition, locally we have
\begin{equation*}
    L_j=L_{V_j}-V_j(\phi),  \ L_{\bar k}= L_{\bar V_k}.
\end{equation*}
Thus
\begin{equation*}
    [L_j,L_{\bar k}]=[L_{V_j}, L_{\bar V_k}] -[V_j(\phi),L_{\bar V_k}]=L_{[V_j,\bar V_k]}+ \bar V_k V_j(\phi).
\end{equation*}
By Cartan's formula, we have
\begin{equation*}
    L_{[V_j,\bar V_k]}=d^{\mathcal L} \delta_{[V_j,\bar V_k]}+\delta_{[V_j,\bar V_k]} d^{\mathcal L}+ \delta_{[V_j,\bar V_k]} \partial\phi,
\end{equation*}
Thus
\begin{equation*}
    [L_j,L_{\bar k}]-\left(d^{\mathcal L} \delta_{[V_j,\bar V_k]}+\delta_{[V_j,\bar V_k]} d^{\mathcal L}\right)=
    \delta_{[V_j,\bar V_k]} \partial\phi+\bar V_k V_j(\phi).
\end{equation*}
By direct computation, we have
\begin{equation}\label{eq:strange}
    \delta_{[V_j,\bar V_k]} \partial\phi+\bar V_k V_j(\phi)=\langle V_j, V_k\rangle_{i\Theta(\mathcal L,h)}.
\end{equation} 
Thus this proposition follows.
\end{proof}

Now we can prove the following:

\begin{lemma}\label{le:boundary} If $i\Theta(\mathcal L,h)|_{D_t}>0$, $\forall \ t\in \mathbb B$, then
\begin{equation}\label{eq:boundary^1}
    \sum \pi_*(c_n\{[ L_j, L_{\bar k}]\textbf u_j, \textbf u_k\})=||c||^2_{i\Theta(\mathcal L,h)|_{D_t}}+ B+ \sum(c_{j\bar k}(h) u_j, u_k),
\end{equation}
where $c$ is defined by \eqref{eq:cb} and $B$ is the boundary term defined by
\begin{equation*}
B:=\sum\int_{\partial D_t} \theta_{j\bar k}(\rho) \langle u_j,u_k\rangle d \sigma 
\end{equation*} If $i\Theta(\mathcal L,h) \equiv 0$ on $D$ then
\begin{equation}\label{eq:boundary^2}
     \sum \pi_*(c_n\{[ L_j, L_{\bar k}]\textbf u_j, \textbf u_k\})=B.
\end{equation}
\end{lemma}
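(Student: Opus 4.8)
The plan is to expand $[L_j,L_{\bar k}]$ using Proposition \ref{pr:key} and then evaluate each of the resulting pieces by integration by parts on the fibre $D_t$, carefully tracking the boundary contributions that arise because the fibre forms $u_j$ are only smooth up to the boundary. Writing $[L_j,L_{\bar k}]=d^{\mathcal L}\delta_W+\delta_W d^{\mathcal L}+\langle V_j,V_k\rangle_{i\Theta(\mathcal L,h)}$ with $W:=[V_j,\bar V_k]$, the curvature term $\langle V_j,V_k\rangle_{i\Theta(\mathcal L,h)}$ contributes, after restriction to $D_t$, precisely $(c_{j\bar k}(h)u_j,u_k)$ when $i\Theta(\mathcal L,h)|_{D_t}>0$ (by the definition \eqref{eq:cjkh} of the geodesic curvature via the horizontal lift $V_j^h$, together with the Cauchy--Schwarz/projection identity that splits $\langle V_j,V_k\rangle_{i\Theta}$ into its horizontal part $c_{j\bar k}(h)$ plus a term that combines with $\delta_W d^{\mathcal L}$ to produce $\|c\|^2_{i\Theta|_{D_t}}$), and contributes nothing when $\Theta(\mathcal L,h)\equiv 0$. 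The remaining two summands $d^{\mathcal L}\delta_W+\delta_W d^{\mathcal L}$ are handled by Cartan/Stokes: since $c_n\{\cdot,\cdot\}$ is a top-degree form on $D_t$, the term $\delta_W$ applied last followed by $d^{\mathcal L}$, paired against $\textbf u_k$ and integrated, becomes a boundary integral over $\partial D_t$, while the other ordering either vanishes (because $u_k$ is $\dbar$-closed as a holomorphic section, killing the $\dbar$-part) or is itself an exact form integrating to the boundary.

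The key point is to identify the boundary integral that survives. After pushing forward and applying Stokes' theorem on $D_t$, one is left with $\sum\int_{\partial D_t}$ of a contraction of $c_n\{\textbf u_j,\textbf u_k\}$ against $W|_{\partial D_t}=[V_j,\bar V_k]|_{\partial D_t}$. Here I would invoke Lemma \ref{le:key-lemma}: each $V_j$ is smooth up to $\partial D$ and $V_j|_{\partial D}$ is horizontal with respect to the Levi form $i\partial\dbar\rho$ and tangent to $\partial D$; hence the relevant component of $[V_j,\bar V_k]$ restricted to the boundary is controlled by $\langle V_j,V_k\rangle_{i\partial\dbar\rho}=\theta_{j\bar k}(\rho)$ — this is exactly the computation \eqref{eq:strange}-type identity but now read on the boundary with $\rho$ in place of $\phi$. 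Matching the surface measure coming out of Stokes with the definition of $d\sigma$ in Theorem \ref{th:CF} (the contraction $\frac{\sum\rho_{\bar\lambda}\rho^{\bar\lambda\nu}\partial/\partial\mu^\nu}{\sum\rho_{\bar\lambda}\rho^{\bar\lambda\nu}\rho_\nu}\lrcorner\frac{(i\partial\dbar\rho|_{D_t})^n}{n!}$, i.e. the Leray form associated to $\rho$) then gives exactly $B=\sum\int_{\partial D_t}\theta_{j\bar k}(\rho)\langle u_j,u_k\rangle\,d\sigma$, with the pointwise inner product $\langle\cdot,\cdot\rangle$ taken with respect to $i\partial\dbar\rho|_{D_t}$ and $h$.

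I would organize the proof in two cases as in the statement. In the ample case $i\Theta(\mathcal L,h)|_{D_t}>0$: write $\langle V_j,V_k\rangle_{i\Theta}=c_{j\bar k}(h)+(\text{error})$, absorb the error together with the $\delta_W\partial_\phi$-type term into $\|c\|^2_{i\Theta|_{D_t}}$ where $c=\sum(V_j\lrcorner\Theta(\mathcal L,h))|_{D_t}\wedge u_j$ (using that, fibrewise, the squared norm of a $(0,1)$-form with respect to $i\Theta|_{D_t}$ reproduces exactly the pairing of its primitive data), and collect the leftover exact/boundary pieces into $B$. In the flat case $\Theta(\mathcal L,h)\equiv 0$ the curvature term simply disappears and only the Stokes boundary term remains, giving \eqref{eq:boundary^2}. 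The main obstacle I anticipate is the bookkeeping at the boundary: justifying the integration by parts when $u_j$ is only $C^\infty$ up to $\partial D$ (not compactly supported in $D_t$), and then correctly extracting the coefficient $\theta_{j\bar k}(\rho)$ and the Leray measure $d\sigma$ from the contracted form $W\lrcorner c_n\{\textbf u_j,\textbf u_k\}|_{\partial D_t}$ — this requires a local computation choosing $\rho$-adapted coordinates on the fibre (as in the two proofs of Lemma \ref{le:key-lemma}) and is where all the genuine content of the boundary term is concentrated; by contrast the interior manipulations are essentially the same as in Berndtsson's product and polarized-fibration cases (Theorems \ref{th:CF-06} and \ref{th:CF-11}), the only new feature being that Stokes now produces a boundary term instead of zero.
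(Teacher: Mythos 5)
Your proposal is correct and follows essentially the same route as the paper: expand $[L_j,L_{\bar k}]$ by Proposition \ref{pr:key}, convert the $d^{\mathcal L}\delta_{[V_j,\bar V_k]}$ piece into the boundary integral by Stokes on the fibre, identify $\delta_{[V_j,\bar V_k]}\partial\rho=\theta_{j\bar k}(\rho)$ on $\partial D$ from the identity \eqref{eq:strange} applied with $\rho$ in place of $\phi$ together with Lemma \ref{le:key-lemma} (tangency and horizontality of $V_j|_{\partial D}$), and split $\langle V_j,V_k\rangle_{i\Theta(\mathcal L,h)}$ via the horizontal lift $V_j^h$ into $c_{j\bar k}(h)$ plus the term producing $\|c\|^2$. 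The only imprecision is peripheral: the $\delta_{[V_j,\bar V_k]}d^{\mathcal L}$ ordering contributes nothing at all (its pairing with $\textbf u_k$ restricts to zero on $D_t$ for bidegree reasons, since $\partial_\phi\textbf u_j$ and $\dbar\textbf u_j$ lie in the ideal generated by $dt^j,d\bar t^j$), so $\|c\|^2_{i\Theta(\mathcal L,h)|_{D_t}}$ arises solely from $\langle V_j-V_j^h,V_k-V_k^h\rangle_{i\Theta(\mathcal L,h)|_{D_t}}$ via $(V_j-V_j^h)\lrcorner\, i\Theta(\mathcal L,h)|_{D_t}=(V_j\lrcorner\, i\Theta(\mathcal L,h))|_{D_t}$, not from any combination with a $\delta_W\partial_\phi$-type term.
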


\begin{proof} By the above proposition, we have
\begin{equation}\label{eq:Lie-bern-1}
     \sum \pi_*(c_n\{[ L_j, L_{\bar k}]\textbf u_j, \textbf u_k\})=\sum c_n\int_{\partial D_t}\{\delta_{[V_j,\bar V_k]}u_j, u_k\}+I,
\end{equation}
where
\begin{equation}
I:=\sum (\langle V_j, V_k\rangle_{i\Theta(\mathcal L,h)} u_j, u_k).
\end{equation}
Now the boundary term can be written as
\begin{equation*}
    \sum c_n\int_{\partial D_t}\{\delta_{[V_j,\bar V_k]}u_j, u_k\}= \sum \int_{\partial D_t} ( \delta_{[V_j,\bar V_k]}\partial \rho) \langle u_j, u_k \rangle d\sigma.
\end{equation*}
We shall prove $\delta_{[V_j,\bar V_k]} \partial\rho = \theta_{j\bar k}(\rho)$ on $\partial D$. In fact, by \eqref{eq:strange}, we have
\begin{equation*}
\delta_{[V_j,\bar V_k]} \partial\rho+\bar V_k V_j(\rho)=\langle V_j, V_k\rangle_{i\partial\dbar\rho},
\end{equation*}
and by our key lemma, $V_j|_{\partial D}=V_j^\rho$, thus $\bar V_k V_j(\rho)\equiv 0$ on $\partial D$ and
\begin{equation*}
    \delta_{[V_j,\bar V_k]} \partial\rho \equiv \theta_{j\bar k}(\rho),\ \text{on}\ \partial D.
\end{equation*}
Thus
\begin{equation}\label{eq:4.19}
B=\sum c_n\int_{\partial D_t}\{\delta_{[V_j,\bar V_k]}u_j, u_k\}.
\end{equation}
and \eqref{eq:Lie-bern-1} implies \eqref{eq:boundary^2}.
Now let us prove \eqref{eq:boundary^1}: By \eqref{eq:cjkh}, we have
\begin{equation*}
\langle V_j, V_k\rangle_{i\Theta(\mathcal L,h)}=c_{j\bar k}(h)+\langle V_j-V_j^h, V_k-V_k^h\rangle_{i\Theta(\mathcal L,h)|_{D_t}}.
\end{equation*}
Since 
\begin{equation*}
    (V_j-V_j^h)~ \lrcorner ~ ( i\Theta(\mathcal L,h)|_{D_t})= ((V_j-V_j^h)~ \lrcorner ~i\Theta(\mathcal L,h))|_{D_t}=(V_j~ \lrcorner ~i\Theta(\mathcal L,h))|_{D_t},
\end{equation*}
by \eqref{eq:cb}, we have
\begin{equation}\label{eq:Lie-bern-2}
I =\sum (c_{j\bar k}(h)u_j, u_k)+ ||c||^2_{i\Theta(\mathcal L,h)|_{D_t}}.
\end{equation}
Thus \eqref{eq:boundary^1} follows.
\end{proof}

By \eqref{eq:curvature2} and \eqref{eq:jbark}, we have
\begin{equation}\label{eq:curvature21}
   (\Theta_{j\bar k} u_j,u_k)=\pi_*(c_n\{[ L_j, L_{\bar k}]\textbf u_j, \textbf u_k\}) +(D_{t^j}u_j,D_{t^k}u_k)-\pi_*(c_n\{L_j \textbf u_j,L_k\textbf u_k\}) .
\end{equation}
Let $a^j$ be the  $(n,0)$-part of $i_t^*(L_j \textbf u_j)$ and  $b^j$ be the $(n-1,1)$-part of $i_t^*(L_j \textbf u_j)$, i.e.
\begin{equation*}
    a^j=i_t^*(\partial_\phi \delta_{V_j}+\delta_{V_j}\partial_\phi) \textbf u_j=i_t^*[\partial_\phi,\delta_{V_j}] \textbf u_j,
\end{equation*}
and
\begin{equation*}
    b^j=i_t^*(\dbar \delta_{V_j}+\delta_{V_j}\dbar)\textbf u_j=(\dbar V_j)|_{D_t} ~ \lrcorner ~ u_j.
\end{equation*}
Then we have
\begin{equation}\label{eq:curvature22}
||\sum D_{t^j}u_j||^2-\sum \pi_*(c_n\{L_j \textbf u_j,L_k\textbf u_k\})=-||a||^2-\pi_*(c_n\{b,b\}),
\end{equation}
where $b$ is defined in \eqref{eq:cb} and
\begin{equation*}
    a=\sum ((D_{t^j}u_j)(t)-a^j).
\end{equation*}
We shall prove that:

\begin{proposition}\label{pr:abc-fun}  $a$ is the $L^2$-minimal solution of
\begin{equation}\label{eq:dbar-equation-3}
  \dbar^t(a)=\partial_\phi^t b+c,
\end{equation}
where $b$ and $c$ are defined in \eqref{eq:cb}.
\end{proposition}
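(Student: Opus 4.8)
The plan is to show separately that $a$ solves $\dbar^{t}(\cdot)=\partial^{t}_{\phi}b+c$ and that $a$ is orthogonal to $\ker\dbar^{t}$; together these identify $a$ as the unique $L^{2}$-minimal solution. For the orthogonality, recall from the construction of the Chern connection (see \eqref{eq:riesz-repre}--\eqref{eq:41000}) that $(D_{t^{j}}u_{j})(t)$ is exactly the Bergman projection $P_{t}$ onto $\mathcal H_{t}$ of $a^{j}:=i_{t}^{*}(\partial_{\phi}\delta_{V_{j}}+\delta_{V_{j}}\partial_{\phi})\mathbf u_{j}$, i.e.\ of the restriction to $D_{t}$ of the type $(n,0)$ part of $L_{j}\mathbf u_{j}$. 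Hence $a=\sum_{j}\big((D_{t^{j}}u_{j})(t)-a^{j}\big)=-\sum_{j}(I-P_{t})a^{j}$ is, up to sign, the sum of the components of the $a^{j}$ orthogonal to $\mathcal H_{t}$, so $a\perp\mathcal H_{t}$. Since $\mathbf u_{j}$ and $V_{j}$ are smooth up to the compact boundary of $D_{t}$, each $a^{j}$ is bounded and so lies in $L^{2}(D_{t},K_{D_{t}}+L_{t})$ (the $L^{2}$-norm of an $(n,0)$-form being metric-independent); thus $a\in L^{2}$. As $\ker\dbar^{t}$ on $L^{2}$ $(n,0)$-forms is exactly $\mathcal H_{t}$, this settles $L^{2}$-minimality once $a$ is a solution. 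Observe also that $\dbar^{t}a=-\sum_{j}\dbar^{t}a^{j}$, since $\dbar^{t}$ kills the holomorphic forms $(D_{t^{j}}u_{j})(t)$.

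To get the $\dbar$-equation, the main tool is the commutation identity $d^{\mathcal L}L_{j}=L_{j}d^{\mathcal L}-(V_{j}\lrcorner\,\Theta(\mathcal L,h))\wedge(\cdot)$, which follows from $[d^{\mathcal L},L_{j}]=[(d^{\mathcal L})^{2},\delta_{V_{j}}]=[\,\Theta(\mathcal L,h)\wedge(\cdot),\delta_{V_{j}}\,]=-(\delta_{V_{j}}\Theta(\mathcal L,h))\wedge(\cdot)$ (the computation behind \eqref{eq:strange}). Apply it to $\mathbf u_{j}$ and decompose $L_{j}\mathbf u_{j}=\tilde a^{j}+\tilde b^{j}$ into its type $(n,0)$ and type $(n-1,1)$ parts, so $i_{t}^{*}\tilde a^{j}=a^{j}$ and $i_{t}^{*}\tilde b^{j}=b^{j}=(\dbar V_{j})|_{D_{t}}\lrcorner\,u_{j}$. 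The type $(n,1)$ component of $d^{\mathcal L}(L_{j}\mathbf u_{j})$ equals $\dbar\tilde a^{j}+\partial_{\phi}\tilde b^{j}$, while $(V_{j}\lrcorner\,\Theta(\mathcal L,h))\wedge\mathbf u_{j}$ is already of type $(n,1)$. The remaining term $L_{j}(d^{\mathcal L}\mathbf u_{j})$ disappears after restriction: because $\mathbf u_{j}\wedge dt$ is holomorphic and $(\partial_{\phi}\mathbf u_{j})\wedge dt=0$ for degree reasons, $d^{\mathcal L}\mathbf u_{j}$ lies in the ideal generated by $dt^{1},\dots,dt^{m}$; and $L_{j}$ preserves that ideal, since in a local frame $L_{j}=L_{V_{j}}-V_{j}(\phi)$ and $L_{V_{j}}(dt^{l})=d(\delta_{V_{j}}dt^{l})=0$. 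Hence $i_{t}^{*}\big(L_{j}(d^{\mathcal L}\mathbf u_{j})\big)=0$, and taking the type $(n,1)$ component of the commutation identity and pulling back by $i_{t}^{*}$ yields $\dbar^{t}a^{j}+\partial^{t}_{\phi}b^{j}=-(V_{j}\lrcorner\,\Theta(\mathcal L,h))|_{D_{t}}\wedge u_{j}$. Summing over $j$ and inserting the definitions \eqref{eq:cb} of $b$, $c$, together with $\dbar^{t}a=-\sum_{j}\dbar^{t}a^{j}$, gives exactly $\dbar^{t}a=\partial^{t}_{\phi}b+c$.

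The hardest point will be the restriction step: checking that $d^{\mathcal L}\mathbf u_{j}$ lies in the ideal generated by $dt^{1},\dots,dt^{m}$, that $L_{j}$ keeps it there, and hence that only the curvature term $c$ and the term $\partial^{t}_{\phi}b$ survive the pullback $i_{t}^{*}$. The accompanying bidegree bookkeeping — which components of $L_{j}\mathbf u_{j}$ and of $d^{\mathcal L}(L_{j}\mathbf u_{j})$ contribute to type $(n,1)$ on the fibre — is routine once this ideal structure is available.
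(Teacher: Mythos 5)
Your proposal is correct and takes essentially the same route as the paper: the key identity $\dbar^t a^j+\partial_\phi^t b^j=-(V_j\lrcorner\,\Theta(\mathcal L,h))|_{D_t}\wedge u_j$ is derived from the commutation relation $[d^{\mathcal L},L_j]=-(\delta_{V_j}\Theta(\mathcal L,h))\wedge(\cdot)$, which is just a repackaging of the paper's graded Jacobi identity for $\dbar,\partial_\phi,\delta_{V_j}$, and the $L^2$-minimality is obtained exactly as in the paper from the fact that $(D_{t^j}u_j)(t)$ is the Bergman projection of $a^j$, so that $a$ is orthogonal to $\mathcal H_t=\ker\dbar^t$. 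Your $dt$-ideal argument for why $i_t^*\bigl(L_j(d^{\mathcal L}\mathbf u_j)\bigr)=0$ is simply a more explicit justification of the restriction step that the paper disposes of via $i_t^*(\dbar\mathbf u_j)=i_t^*(\partial_\phi\mathbf u_j)=0$.
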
 

\begin{proof} Since $i_t^*(\dbar \textbf u_j )\equiv 0$ and $i_t^*(\partial_\phi \textbf u_j )\equiv 0$, we have 
\begin{equation}\label{eq:dbar-equation}
    \dbar^t a^j+\partial_\phi^t b^j= i_t^*(\dbar[\partial_\phi,\delta_{V_j}]+\partial_\phi[\dbar, \delta_{V_j}]) \textbf u_j=i_t^*([\dbar,[\partial_\phi,\delta_{V_j}]]+[\partial_\phi,[\dbar, \delta_{V_j}]])\textbf u_j.
\end{equation}
Since
\begin{equation}\label{eq:dbar-equation-1}
    [\dbar,[\partial_\phi,\delta_{V_j}]]+[\partial_\phi,[\dbar, \delta_{V_j}]]+[\delta_{V_j}, [\dbar,\partial_\phi]]\equiv 0,
\end{equation}
and $[\dbar,\partial_\phi]\equiv\Theta(\mathcal L, h)$, we get that
\begin{equation}\label{eq:dbar-equation-2}
    \dbar^t a^j+\partial_\phi^t b^j=-(V_j~\lrcorner~ \Theta(\mathcal L, h))|_{D_t} \wedge u_j.
\end{equation}
Recall that by \eqref{eq:41000}, each $D_{t^j} u_j$ is just the Bergman projection to $\mathcal H_t$ of $a^j$. Thus this proposition follows from \eqref{eq:dbar-equation-2}.
\end{proof}

By Lemma \ref{le:boundary}, \eqref{eq:curvature21} and \eqref{eq:curvature22}, we have
\begin{equation}\label{eq:curvature31}
    \sum(\Theta_{j\bar k} u_j, u_k)= \sum\int_{\partial D_t} \theta_{j\bar k}(\rho) \langle u_j,u_k\rangle d \sigma
    + \sum(c_{j\bar k}(h) u_j, u_k) + R',
\end{equation}
where
\begin{equation*}
    R'=||c||^2_{i\Theta(\mathcal L,h)|_{D_t}}-\pi_*(c_n\{b,b\})-||a||^2.
\end{equation*}
Now let us prove that $R'=R$. It is enough to prove that
\begin{equation*}
    -\pi_*(c_n\{b,b\})=||b||^2_{\omega^t}.
\end{equation*}
But it follows directly from the following lemma:

\begin{lemma}\label{le:primitive} $b$ is primitive with respect to $\omega^t:=i\partial\dbar(-\log-\rho)|_{D_t}$.
\end{lemma}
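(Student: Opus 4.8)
The plan is to reduce the claim to a pointwise algebraic identity in local coordinates. Since primitivity is a pointwise, $\C$-linear condition and $b=\sum_j b^j$ with $b^j:=(\dbar V_j)|_{D_t}\,\lrcorner\,u_j$, it suffices to show that each $b^j$ is primitive with respect to $\omega^t$; moreover, on the $n$-dimensional fibre $D_t$ the form $b^j$ has bidegree $(n-1,1)$, so primitivity is equivalent to the single identity $\omega^t\wedge b^j=0$, and there is nothing to prove when $n=1$. I would fix a local coordinate chart on $D_t$ together with a local holomorphic frame of $\mathcal L$ (the bundle factor is inert and I suppress it), write $\psi=-\log(-\rho)$, so $\omega^t=i\partial\dbar\psi|_{D_t}$, and recall from Lemma \ref{le:key-lemma} and \eqref{eq:vj-2} that $V_j=\partial/\partial t^j-\sum_\lambda v_j^\lambda\,\partial/\partial\mu^\lambda$ with $v_j^\lambda=\sum_\nu\psi_{j\bar\nu}\psi^{\bar\nu\lambda}$ on $D$. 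Pulling $\dbar V_j$ back to the fibre kills the $d\bar t$-components, so that
\begin{equation*}
b^j=-\sum_{\lambda,\beta}\frac{\partial v_j^\lambda}{\partial\bar\mu^\beta}\;d\bar\mu^\beta\wedge\Bigl(\frac{\partial}{\partial\mu^\lambda}\,\lrcorner\,u_j\Bigr).
\end{equation*}

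Next I would expand $\omega^t\wedge b^j$ directly. Because $u_j$ has maximal holomorphic degree on the fibre, $d\mu^\beta\wedge(\partial/\partial\mu^\lambda\,\lrcorner\,u_j)$ equals $u_j$ when $\beta=\lambda$ and vanishes otherwise; moving the holomorphic differential coming from $\omega^t$ past the two antiholomorphic ones (an even permutation), one obtains
\begin{equation*}
\omega^t\wedge b^j=-\,i\sum_{\alpha,\beta}\Bigl(\sum_\lambda\psi_{\lambda\bar\alpha}\,\frac{\partial v_j^\lambda}{\partial\bar\mu^\beta}\Bigr)\,d\bar\mu^\alpha\wedge d\bar\mu^\beta\wedge u_j .
\end{equation*}
The decisive point is that the inner coefficient is symmetric in $\alpha$ and $\beta$, and this is exactly where horizontality of $V_j$ enters: the defining relation \eqref{eq:vj-2} of the horizontal lift is $\sum_\lambda\psi_{\lambda\bar\alpha}\,v_j^\lambda=\psi_{j\bar\alpha}$ for every $\alpha$, and differentiating it in $\bar\mu^\beta$ gives
\begin{equation*}
\sum_\lambda\psi_{\lambda\bar\alpha}\,\frac{\partial v_j^\lambda}{\partial\bar\mu^\beta}=\frac{\partial^3\psi}{\partial t^j\,\partial\bar\mu^\alpha\,\partial\bar\mu^\beta}-\sum_\lambda\frac{\partial^3\psi}{\partial\mu^\lambda\,\partial\bar\mu^\alpha\,\partial\bar\mu^\beta}\,v_j^\lambda ,
\end{equation*}
and each term on the right is visibly symmetric in $\alpha,\beta$, being a third partial derivative of $\psi$ in the two antiholomorphic directions $\bar\mu^\alpha$ and $\bar\mu^\beta$. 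Since $d\bar\mu^\alpha\wedge d\bar\mu^\beta$ is antisymmetric in $\alpha,\beta$, the double sum cancels term by term, so $\omega^t\wedge b^j=0$; summing over $j$ gives $\omega^t\wedge b=0$, which is the assertion.

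I do not expect a genuine obstacle here. The only points needing care are the bookkeeping of the contraction $(\dbar V_j)|_{D_t}\,\lrcorner\,u_j$, the permutation sign when moving the $d\mu$-factor of $\omega^t$ past $d\bar\mu^\alpha\wedge d\bar\mu^\beta$, and the harmless but essential fact that the $\bar\mu$-derivatives are applied to the coefficients $v_j^\lambda$ of the lift $V_j$ on $D$ \emph{before} restricting to the fibre. Conceptually this is the familiar phenomenon that a horizontal lift taken with respect to a Kähler form on the total space produces, on each fibre, a primitive Kodaira--Spencer-type representative — the same mechanism underlying Berndtsson's curvature formulae and the term $c_{j\bar k}(h)$ above. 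One could also argue in a coordinate-free way by observing that horizontality of $V_j$ with respect to $\omega=i\partial\dbar\psi$ says precisely that $i_t^*(\delta_{V_j}\omega)=0$ for every $t$, and differentiating this relation; but the explicit computation above is the most transparent route.
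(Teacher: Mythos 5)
Your proof is correct and follows essentially the same route as the paper: the symmetry you obtain by differentiating the horizontality relation $\sum_\lambda\psi_{\lambda\bar\alpha}v_j^\lambda=\psi_{j\bar\alpha}$ in $\bar\mu^\beta$ is exactly the coordinate form of the paper's one-line argument that $(V_j~\lrcorner~ i\partial\dbar(-\log-\rho))|_{D_t}=0$ forces $((\dbar V_j)~\lrcorner~ i\partial\dbar(-\log-\rho))|_{D_t}=0$, whence $\omega^t\wedge b=0$. The coordinate-free variant you sketch in your last sentence is precisely the proof given in the paper.
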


\begin{proof} Recall that
\begin{equation*}
    b=\sum (\dbar V_j)|_{D_t} ~ \lrcorner ~ u_j.
\end{equation*}
Since $b$ is an $(n-1,1)$-form, by definition of primitivity, it suffices to show that
\begin{equation*}
    \omega^t\wedge b\equiv 0, \ \text{on} \ D_t.
\end{equation*}
Thus it is enough to prove that
\begin{equation}\label{eq:primitive}
    ((\dbar V_j)  ~ \lrcorner ~ i\partial\dbar(-\log-\rho) )|_{D_t}\equiv 0, \ \forall \ 1\leq j\leq m.
\end{equation}
By definition of $V_j$ in our Key-Lemma, $(V_j~ \lrcorner ~ i\partial\dbar(-\log-\rho))|_{D_t}=0$. Thus \eqref{eq:primitive} is true.
\end{proof}

\textbf{Remark}: Now we know that Theorem \ref{th:CF} is true if each $u_j$ is a \textbf{holomorphic section} of $\mathcal H$. For finite rank vector bundles, the curvature operators are always pointwise defined, thus it is enough to find a curvature formula for holomorphic sections in finite rank case. One may guess that the same argument also works for the general infinite rank vector bundle. In the next subsection, we shall prove that at least the curvature operators for our bundle $\mathcal H$ are pointwise defined. Thus we know that \eqref{eq:curvature31} is also true for \textbf{general smooth sections} of $\mathcal H$.

\subsection{Curvature formula for general sections}\ \

\medskip

By the above remark, we need to prove that the curvature operators $\Theta_{j\bar k}$ on $\mathcal H$ are pointwise defined. We shall use the following two lemmas.

\begin{lemma}\label{le:symmetry} $(\Theta_{j\bar k}u,v)=(u, \Theta_{k\bar j} v)$, $\forall \ u,v\in\Gamma(\mathcal H)$.
\end{lemma}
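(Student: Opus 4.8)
The plan is to prove the symmetry relation $(\Theta_{j\bar k}u,v)=(u,\Theta_{k\bar j}v)$ by exploiting the general identity satisfied by the curvature of a Chern connection together with the defining relation \eqref{eq:chern-connection-1}. Recall that for any Hermitian connection, the curvature form is skew-Hermitian in the appropriate sense; concretely, $\Theta_{j\bar k}=[D_{t^j},\dbar_{t^k}]=D_{t^j}\dbar_{t^k}-\dbar_{t^k}D_{t^j}$, and the adjoint of $D_{t^j}$ with respect to the (non-constant) Hermitian inner product on $\mathcal H_t$ is exactly $\dbar_{t^j}$ by \eqref{eq:chern-connection}. So morally $\Theta_{j\bar k}^* = [\,\dbar_{t^j}^*\,,\,\dbar_{t^k}^*\,]^*$ should unwind to $\Theta_{k\bar j}$; the content of the lemma is to make this precise for sections smooth up to the boundary, where all the pairings are legitimate.

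First I would take $u,v\in\Gamma(\mathcal H)$ and expand
\begin{equation*}
(\Theta_{j\bar k}u,v)=(D_{t^j}\dbar_{t^k}u,v)-(\dbar_{t^k}D_{t^j}u,v).
\end{equation*}
For the first term, apply \eqref{eq:chern-connection-1} with the pair $(\dbar_{t^k}u, v)$: since $D_{t^j}$ is the $(1,0)$-Chern connection, $(D_{t^j}w,v)=\partial_{t^j}(w,v)-(w,\dbar_{t^j}v)$ for any $w\in\Gamma(\mathcal H)$, in particular for $w=\dbar_{t^k}u$. For the second term, use that $\dbar_{t^k}$ is the formal adjoint of $D_{t^k}$ in the conjugate slot, i.e. from \eqref{eq:chern-connection-1} read with indices $k$ and slots swapped one gets $(\dbar_{t^k}w,v)=\overline{(D_{t^k}v,w)}=\overline{\partial_{t^k}(v,w)-(v,\dbar_{t^k}w)}$; more directly, $(\dbar_{t^k}D_{t^j}u,v)=\partial_{\bar t^k}(D_{t^j}u,v)-(D_{t^j}u,D_{t^k}v)$, using the defining relation of $\dbar_{t^k}$ as the adjoint of $D_{t^k}$. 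Substituting both, the "$\partial$" terms combine into $\partial_{t^j}\partial_{\bar t^k}(u,v)$ after one more application of \eqref{eq:chern-connection-1} to rewrite $\partial_{t^j}(\dbar_{t^k}u,v)$, the cross terms $(D_{t^j}u,D_{t^k}v)$ and its analog appear symmetrically, and what remains is manifestly symmetric under $(j,u)\leftrightarrow(k,v)$ with a complex conjugate, which is exactly $(u,\Theta_{k\bar j}v)$.

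Concretely, the cleanest route is: compute $(\Theta_{j\bar k}u,v)$ and $\overline{(\Theta_{k\bar j}v,u)}=(u,\Theta_{k\bar j}v)$ independently by the above expansion, and check both reduce to the same expression
\begin{equation*}
\partial_{t^j}\partial_{\bar t^k}(u,v)-(D_{t^j}u,D_{t^k}v)-(\dbar_{t^k}u,\dbar_{t^j}v),
\end{equation*}
or whatever symmetric combination emerges after bookkeeping. The only analytic subtlety is that all derivatives $\partial_{t^j}$, $\partial_{\bar t^k}$ of the functions $t\mapsto(u^t,v^t)$, and all the iterated applications of \eqref{eq:chern-connection-1}, are justified because $u,v$ and $D_{t^j}u$, $\dbar_{t^k}u$ etc.\ all lie in $\Gamma(\mathcal H)$ — this is precisely why the Chern connection was shown to be well defined in Proposition \ref{pr:chern} — so there are no boundary contributions or domain-of-definition issues beyond what has already been established.

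The main obstacle I anticipate is purely one of careful bookkeeping: one must apply the defining identity \eqref{eq:chern-connection-1} in the right slot the right number of times and track which terms are holomorphic vs.\ antiholomorphic derivatives of $(u,v)$, so that the mixed second derivative $\partial_{t^j}\partial_{\bar t^k}(u,v)$ (which is symmetric in the required sense by equality of mixed partials) is correctly isolated. There is no deep difficulty — the statement is a formal consequence of the Chern connection axioms valid in any Hermitian bundle, finite or infinite rank — but it is easy to drop or misplace a conjugate or a sign, so I would write out the three or four substitutions explicitly rather than gesture at them.
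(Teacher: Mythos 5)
Your proposal is correct and is essentially the paper's own proof: the paper likewise computes the mixed derivative $(u,v)_{j\bar k}=(u,v)_{\bar k j}$ in both orders using the defining relation \eqref{eq:chern-connection} (all iterated applications being legitimate since $D_{t^j}u,\dbar_{t^k}u\in\Gamma(\mathcal H)$ by Proposition \ref{pr:chern}), and reads off $(\Theta_{j\bar k}u,v)=(u,\Theta_{k\bar j}v)$ from the equality of mixed partials. The only bookkeeping slip is that your displayed expression $\partial_{t^j}\partial_{\bar t^k}(u,v)-(D_{t^j}u,D_{t^k}v)-(\dbar_{t^k}u,\dbar_{t^j}v)$ equals $(D_{t^j}\dbar_{t^k}u,v)+(u,\dbar_{t^j}D_{t^k}v)$ rather than $(\Theta_{j\bar k}u,v)$ itself, but, as you anticipated, this washes out when the two orderings are compared.
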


\begin{proof} By \eqref{eq:chern-connection}, we have
\begin{equation*}
    (u,v)_{\bar k j}=\dbar_{t^k}((D_{t^j}u,v)+(u, \dbar_{t^j}v))=(\dbar_{t^k}D_{t^j}u,v)+(D_{t^j}u,D_{t^k}v)+
    (\dbar_{t^k}u, \dbar_{t^j}v)+ (u, D_{t^k}\dbar_{t^j}v).
\end{equation*}
On the other hand,
\begin{equation*}
    (u,v)_{j\bar k}=\partial_{t^j}((\dbar_{t^k}u,v)+(u, D_{t^k}v))=(D_{t^j}\dbar_{t^k}u,v)+(D_{t^j}u,D_{t^k}v)+
    (\dbar_{t^k}u, \dbar_{t^j}v)+ (u, \dbar_{t^j}D_{t^k}v).
\end{equation*}
Since $(u,v)_{\bar k j}\equiv (u,v)_{j\bar k}$, the lemma follows by comparing the difference of the above two equality.
\end{proof}

\begin{lemma}\label{le:extension} Assume that $D$ satisfies $\mathbf{A1}$ and $\mathbf{A2}$. Fix $u\in\Gamma(\mathcal H)$ and $t_0\in\mathbb B$. Then $u|_{D_{t_0}}$ can be approximated by holomorphic sections of $\mathcal H$ in the following sense:

For every $0<s<1$,  there exists a holomorphic section $u^{(s)}$ of $\mathcal H$ over an open neighborhood (may depend on $s$) of $t_0$ such that
\begin{equation}
(\eta,s)\mapsto u^{(s)}|_{D_{t_0}} (\eta),  \ 0<s<1,  \ \ \ (\eta,0)\mapsto u|_{D_{t_0}}(\eta),
\end{equation} 
 is smooth up to the boundary of $D_{t_0}\times[0,1)$.
\end{lemma}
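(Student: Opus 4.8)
The goal is to show that any smooth section $u \in \Gamma(\mathcal H)$, restricted to a single fibre $D_{t_0}$, is a "one-parameter smooth limit" of honest holomorphic sections of $\mathcal H$ defined near $t_0$. The plan is to solve a $\dbar$-equation on a neighborhood of the closure of $D_{t_0}$ with a parameter, and patch the solution into the given representative $\mathbf u$. First I would fix a representative $\mathbf u$ of $u$, smooth up to the boundary of $D$ (Definition \ref{de:re}), and note that $\dbar(\mathbf u\wedge dt)$ vanishes to order zero along $D_{t_0}$ only up to the fibrewise piece: by \eqref{eq:def-dbar25} the obstruction to $u$ being holomorphic along $D_{t_0}$ is governed by $i_{t_0}^*\nu_j = (\dbar_{t^j}u)(t_0)$. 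So I would first correct $\mathbf u$ to a representative whose $\dbar$ along the fibre $D_{t_0}$ is controlled, then solve away the remaining error.

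The key steps, in order, are: (1) By assumption $\mathbf{A2}$, there is a strictly plurisubharmonic $\rho$ with $D_{t_0} = \{\rho<0\}\cap \pi^{-1}(t_0)$ and nonvanishing gradient on $\partial D_{t_0}$; hence $D_{t_0}$ has a strongly pseudoconvex Stein neighborhood basis inside the fibre $\pi^{-1}(t_0)$ (take $\{\rho < \varepsilon\}\cap\pi^{-1}(t_0)$). On such a neighborhood H\"ormander's $L^2$-theory for $\dbar$ applies with good estimates, and, crucially, one gets solutions that are smooth up to the boundary (this is exactly the regularity of the $\dbar$-Neumann problem / Kohn's theorem on strongly pseudoconvex domains, which is the same input used for Hamilton's theorem cited around \eqref{eq:41000}). (2) For each small $s>0$, I would shrink to a slightly smaller strongly pseudoconvex domain $D_{t_0}^{(s)} := \{\rho < -s\}\cap\pi^{-1}(t_0)\Subset \pi^{-1}(t_0)$, and solve $\dbar w^{(s)} = g^{(s)}$ on $D_{t_0}^{(s)}$, where $g^{(s)}$ is the $\dbar$ of (a cutoff of) $\mathbf u$ restricted appropriately, choosing the $L^2$-minimal solution. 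One checks $\|g^{(s)}\|\to 0$ and hence $\|w^{(s)}\|\to 0$ as $s\to 0$, because the fibrewise $\dbar$ of $u$ itself need not vanish but can be absorbed — here one uses that a fixed holomorphic extension of $u|_{D_{t_0}}$ to the fibre exists by Ohsawa--Takegoshi / the Stein property of the enlarged fibre, so the residual error is supported near $\partial D_{t_0}^{(s)}$ and shrinks. (3) Set $\tilde u^{(s)} := (\text{cutoff of }\mathbf u) - w^{(s)}$, a holomorphic $L_t$-valued $(n,0)$-form on $D_{t_0}^{(s)}\supset \overline{D_{t_0}}$ for $s$ small, hence in particular holomorphic and $L^2$ on $D_{t_0}$. (4) Finally, extend $\tilde u^{(s)}$ in the $t$-directions: since $D$ satisfies $\mathbf{A1}$–$\mathbf{A2}$, $\{D_t\}$ is locally trivial as a smooth family (the remark after $\mathbf{A2}$), and using a local trivialization together with a further parametrized $\dbar$-solve (or simply the deformation of strongly pseudoconvex domains and stability of $L^2$-holomorphic extension under small perturbation of $t$), one produces $u^{(s)}$ a holomorphic section of $\mathcal H$ over a neighborhood of $t_0$ with $u^{(s)}|_{D_{t_0}} = \tilde u^{(s)}|_{D_{t_0}}$. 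The joint smoothness of $(\eta,s)\mapsto u^{(s)}|_{D_{t_0}}(\eta)$ up to the boundary of $D_{t_0}\times[0,1)$, with limit $u|_{D_{t_0}}$ at $s=0$, follows from the smooth (up to the boundary and in the parameter $s$) dependence of the $L^2$-minimal solutions $w^{(s)}$ — again Kohn-type regularity with parameters — together with $w^{(s)}\to 0$.

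The main obstacle I expect is step (2)–(4): getting the solutions $w^{(s)}$ (and the $t$-extension) to depend \emph{smoothly up to the boundary} jointly in the auxiliary parameter $s$ and the fibre coordinate $\eta$, not merely in $L^2$ or in the interior. Interior smoothness and $L^2$-convergence are routine from H\"ormander's estimates, but boundary regularity requires the strong pseudoconvexity of the shrunken domains $\{\rho<-s\}$ uniformly in $s$ (which holds since $\rho$ is strictly plurisubharmonic near $\overline{D_{t_0}}$ by $\mathbf{A2}$) and a parametrized version of the subelliptic estimates for the $\dbar$-Neumann problem. This is precisely the analytic heart of the lemma; everything else is bookkeeping with cutoffs, the local triviality of the family, and the definition of holomorphic sections of $\mathcal H$ via $\mathbf u\wedge dt$ being holomorphic.
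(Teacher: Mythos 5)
There is a genuine gap, and it sits exactly where the lemma is nontrivial. First, your domains are inconsistent: $D_{t_0}^{(s)}=\{\rho<-s\}\cap\pi^{-1}(t_0)$ is a \emph{proper subdomain} of $D_{t_0}=\{\rho<0\}\cap\pi^{-1}(t_0)$, yet in step (3) you assert $D_{t_0}^{(s)}\supset\overline{D_{t_0}}$. A form holomorphic only on a shrunken domain cannot even be restricted to all of $D_{t_0}$, and what the argument really requires is a holomorphic object on a fibre domain strictly containing $\overline{D_{t_0}}$, so that it can be propagated to nearby fibres. Second, the device you propose to produce such an object fails: the error $\dbar(\chi_s\mathbf u)$ coming from a cutoff concentrated in a collar of width $s$ has $L^2$-norm of order $s^{-1/2}$, so it does not tend to zero, and your proposed remedy --- that ``a fixed holomorphic extension of $u|_{D_{t_0}}$ to the fibre exists by Ohsawa--Takegoshi / the Stein property'' --- is false in general: Ohsawa--Takegoshi extends data from complex submanifolds with $L^2$ bounds, it does not continue a holomorphic form on $D_{t_0}$ across $\partial D_{t_0}$, and a function holomorphic on a smoothly bounded strongly pseudoconvex domain and smooth up to the boundary typically extends holomorphically to \emph{no} larger open set (were such an extension available, the lemma would be nearly immediate). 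Note also that $u^{t_0}\in\mathcal H_{t_0}$ is already holomorphic on the fibre, so ``the fibrewise $\dbar$ of $u$'' vanishes; the defect of $u$ is purely in the $\bar t$-directions, and no fibrewise $\dbar$-equation on $D_{t_0}$ alone can repair that. Your step (4) is likewise not routine: a smooth local trivialization does not transport holomorphy, so the ``parametrized $\dbar$-solve'' in the $t$-direction is precisely the point that still needs a proof.

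For comparison, the paper's proof avoids all of this by \emph{enlarging} rather than shrinking: it sets $D_{t_0}^s=\{\rho(t_0,\cdot)<\varepsilon s\}\supset\overline{D_{t_0}}$ and defines $u^{(s)}$ as the Bergman projection of $u|_{D_{t_0}^s}$ onto the $L^2$-holomorphic forms of the enlarged domain --- no cutoff, no analytic continuation of $u$ is needed, and holomorphy on a neighborhood of $\overline{D_{t_0}}$ inside the fibre comes for free. The passage to a holomorphic section of $\mathcal H$ over a neighborhood of $t_0$ is then obtained from Siu's Stein-neighborhood theorem together with Cartan's Theorem B (this is the correct substitute for your smooth trivialization argument), and the joint smoothness of $(\eta,s)\mapsto u^{(s)}|_{D_{t_0}}(\eta)$ up to the boundary of $D_{t_0}\times[0,1)$, including the limit $u|_{D_{t_0}}$ at $s=0$, is exactly Hamilton's stability theorem (Appendix \ref{ss:SBK}) applied to the Bergman projections of the family $\{D_{t_0}^s\}$, which replaces your appeal to ``Kohn-type regularity with parameters.'' If you want to salvage your outline, replace steps (2)--(3) by the Bergman projection on the enlarged domains and replace step (4) by the Siu--Cartan extension; as written, the proposal does not close.
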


\begin{proof} Fix a sufficiently small $\varepsilon >0$ and consider
\begin{equation*}
    D_{t_0}^s:=\{\zeta\in \pi^{-1}(t_0): \rho(t_0, \zeta)<\varepsilon s\}.
\end{equation*}
Let us define $u^{(s)}$ as the Bergman projection  to the space of $L^2$-holomorphic forms on $D_{t_0}^s$ of  $u|_{D_{t_0}^s}$. By Siu's theorem \cite{Siu76}, for every $0<s<1$, $D_{t_0}^s$ has a Stein neighborhood in $\mathcal X$. Thus by Cartan's theorem, every $u^{(s)}$ extends to a holomorphic section (also denoted by $u^{(s)}$) of $\mathcal H$ over an open neighborhood of $t_0$. The regularity properties of $\{u^{(s)}\}$ follows directly from Hamilton's theorem (see Appendix \ref{ss:SBK}).
\end{proof}

Now let us finish the proof of Theorem \ref{th:CF}. By the above two lemmas, for every $u_j\in\Gamma(\mathcal H)$, $1\leq j\leq m$, $t_0\in\mathbb B$, we have
\begin{eqnarray*}
  (\Theta_{j\bar k}u_j,u_k)(t_0) &=& \lim_{s_1\to 0}(\Theta_{j\bar k}u_j,u_k^{(s_1)})(t_0)=\lim_{s_1\to 0}(u_j,\Theta_{k\bar j}u_k^{(s_1)})(t_0) \\
   &=& \lim_{s_1\to 0}\lim_{s_2\to 0} (u_j^{(s_2)}, \Theta_{k\bar j}u_k^{(s_1)})(t_0)=
    \lim_{s_1\to 0}\lim_{s_2\to 0} (\Theta_{j\bar k} u_j^{(s_2)}, u_k^{(s_1)})(t_0) \\
   &=& \lim_{s\to 0}(\Theta_{j\bar k} u_j^{(s)}, u_k^{(s)})(t_0).
\end{eqnarray*}
where $u_j^{(s)}$ are holomorphic sections of $\mathcal H$ defined in Lemma \ref{le:extension}. By our curvature formula for holomorphic sections, we have
\begin{equation*}
    \sum(\Theta_{j\bar k} u_j^{(s)}, u_k^{(s)})= \sum\int_{\partial D_t} \theta_{j\bar k}(\rho) \langle u_j^{(s)},u_k^{(s)}\rangle d \sigma
    + \sum(c_{j\bar k}(h) u_j^{(s)}, u_k^{(s)}) + R(s),
\end{equation*}
where
\begin{equation*}
    R(s)=||c(s)||^2_{i\Theta(\mathcal L,h)|_{D_t}}+||b(s)||^2_{\omega^t}-||a(s)||^2_{\omega^t}.
\end{equation*}
Since $a(s), \ b(s)$ and $c(s)$ only depend on $u_j^{(s)}|_{D_{t_0}}$, by Lemma \ref{le:extension}, let $s\to 0$, we know that \eqref{eq:final-CF} is true at $t_0$. Since $t_0$ is an arbitrary point in $\mathbb B$, the proof of Theorem \ref{th:CF} is complete.

\subsection{Proof of Corollary \ref{co:nakano}}\ \

\medskip

For any fixed $t_0\in\mathbb B$, one may choose a sufficiently large positive constant $A$ such that $\rho+A|t|^2$ is strictly plurisubharmonic in a neighborhood of the closure of $D\cap\pi^{-1}(U)$, where $U$ is a small neighborhhod of $t_0$. Now for every $\varepsilon >0$, 
\begin{equation*}
    h^\varepsilon:=he^{-\varepsilon(\rho+A|t|^2)},
\end{equation*}
defines a smooth Hermitian metric on $\mathcal L$ with positive curvature on a neighborhood of the closure of $D\cap\pi^{-1}(U)$. Denote by $\mathcal H^\varepsilon$ the associatied family of Hilbert spaces with respect to $h^\varepsilon$. Denote by $\Theta_{j\bar k}^{\varepsilon}$ the assocaited curvature operator on $\mathcal H^\varepsilon$.  Since the total space $D$ is Stein, we know that $\theta_{j\bar k}$ is semi-positive. By the construction of $h^\varepsilon$, we know that $c_{j\bar k}(h^\varepsilon)$ is positive on $D\cap\pi^{-1}(U)$. Thus our main theorem implies that $\mathcal H^\varepsilon$ is Nakano positive on $U$. By Hamilton's theorem, we have
\begin{equation*}
    \sum(\Theta_{j\bar k} u_j, u_k)(t_0)= \lim_{\varepsilon\to 0} \sum (\Theta_{j\bar k}^{\varepsilon}u_j, u_k)(t_0)\geq 0, \ \forall \ u,v\in\Gamma(\mathcal H).
\end{equation*}
Thus $\mathcal H$ is Nakano semi-positive at $t_0$. Since $t_0$ is an arbitrary point in $\mathbb B$, we know that $\mathcal H$ is Nakano semi-positive.

\section{Curvature of the dual family}

In this section, we shall prove our main application Corollary \ref{co:dual-psh}. As a direct application, we shall give a plurisubharmonicity property of the derivatives of the Bergman kernel, which can be seen as a generalization of Theorem C. In the last part of this section, based on a remarkable idea of Berndtsson and Lempert \cite{BL14}, we shall show how to use Corollary \ref{co:dual-psh} to study plurisubharmonicity properties of the Bergman projection of currents with compact support.

\subsection{Proof of Corollary \ref{co:dual-psh}}\ \

\medskip

Let $f$ be a holomorphic section of the dual of $\mathcal H$. By Definition \ref{de:smooth-dual-new}, we know that there is a smooth section, say $P(f)$, of $\mathcal H$, such that
\begin{equation}
f^t(u^t)=(u^t, P(f)^t),
\end{equation}
for every $u^t\in \mathcal H_t$. Moreover, by Definition \ref{de:holomorphic-dual-new}, we know that
\begin{equation}
f(u): t\mapsto f^t(u^t),
\end{equation}
is a holomorphic function of $t$ if $u$ is a holomorphic section of $\mathcal H$. Thus we have
\begin{equation}
0\equiv \dbar_{t^j}f(u) = (u, D_{t^j}P(f)),
\end{equation}
for every holomorphic section $u$ of $\mathcal H$. By Lemma \ref{le:extension}, we know that 
\begin{equation}\label{eq:new-important}
D_{t^j}P(f) \equiv 0,
\end{equation}
which implies that
\begin{equation}
\partial_{t^j}\dbar_{t^k}(||P(f)||^2)=(\dbar_{t^k}P(f), \dbar_{t^j}P(f))+(\Theta_{j\bar k}P(f), P(f)).
\end{equation}
By Corollary \ref{co:nakano}, we have  
\begin{equation*}
\sum (\Theta_{j\bar k}(\xi_j P(f)),\xi_k P(f)) 
\geq 0.
\end{equation*}
for every $\xi\in\C^m$. Thus we have
\begin{equation*}
    \sum \dbar_{t^k}\partial_{t^j}(\log||P(f)||^2) \xi_j\bar\xi_k \geq 
    \frac{||\sum\bar\xi_k \dbar_{t^k}P(f)||^2}{||P(f)||^2}-
    \frac{|(P(f), \sum\bar\xi_k \dbar_{t^k}P(f) )|^2}{||P(f)||^4},
\end{equation*}
on
\begin{equation}
U_f:= \{t\in\mathbb B: ||P(f)^t||>0\}.
\end{equation}
By Schwartz inequality, we have $\sum \dbar_{t^k}\partial_{t^j}(\log||P(f)||^2) \xi_j\bar\xi_k \geq 0$ on $U_f$. Notice that
\begin{equation}
||P(f)||:  t\mapsto ||P(f)^t||=||f^t||
\end{equation}
is a smooth function on $\mathbb B$. Thus $\log||P(f)||=\log||f||$ is plurisubharmonic on $\mathbb B$. The proof is complete.

\subsection{Variation of the derivatives of the Bergman kernel}\ \

\medskip

For simplicity purposes, we shall only consider the following case:  

\medskip

\textbf{Pseudoconvex family in $\C^n$}: In this case, $\mathcal X$ is $\mathbb C^n \times \mathbb B$ and $\pi$ is just the natural projection to $\mathbb B$. Assume that $D$ satisfies $\mathbf{A1}$ and $\mathbf{A2}$. One may look at $D=\{D_t\}_{t\in\mathbb B}$ as a smooth family of smoothly bounded strongly pseudoconvex domains in $\C^n$. Moreover, we shall assume that $\mathcal L$ is a trivial line bundle over $\mathcal X$ with Hermitian metric $h=e^{-\phi}$. 

\medskip

\textbf{Variation formula of the derivatives of the Bergman kernel}: Fix $\eta\in D_0$, replace $\mathbb B$ by a smaller ball if necessary, one may assume that 
\begin{equation}
\eta\in D_t, \ \ \forall\ t\in\mathbb B.
\end{equation}
Let us consider
\begin{equation*}
 {D^\alpha\eta}: f\mapsto f_\alpha (\eta):=\frac{\partial^{|\alpha|}f}{(\partial\mu^1)^{\alpha_1}\cdots(\partial\mu^n)^{\alpha_n}}(\eta), \ \ \forall \ f=f(\mu)d\mu\in \mathcal H_t,
\end{equation*}
where $\alpha\in\mathbb N^n, \ |\alpha|:=\alpha_1+\cdots+\alpha_n$ and $d\mu$ is short for $d\mu^1\wedge \cdots\wedge d\mu^n$. By Definition \ref{de:holomorphic-dual-new}, we know that every ${D^\alpha\eta}$ defines a holomorphic section of the dual of $\mathcal H$. Put
\begin{equation*}
    {\cdot_\alpha\eta}:=P(D^\alpha\eta).
\end{equation*}
i.e., ${\cdot_\alpha\eta}$ is the unique smooth section of $\mathcal H$ such that
\begin{equation}\label{eq:def-point}
 ( f, {\cdot_\alpha\eta} )=f_\alpha (\eta) \ \ \forall \ f\in \mathcal H_t.
\end{equation}
Let $K^t(\zeta,\eta)d\zeta\otimes\overline{d\eta}$ be the Bergman reproducing kernel of $\mathcal H_t$. Then \eqref{eq:def-point} implies that
\begin{equation}\label{eq:reproducing-kernel}
  \cdot_0\eta=K^t(\mu,\eta)d\mu, \ ( {\cdot_\beta\eta}, {\cdot_\alpha\zeta} )
  =({\cdot_\beta\eta})_\alpha(\zeta)=\overline{({\cdot_\alpha\zeta})_\beta(\eta)}=K^t_{\alpha\bar\beta}(\zeta,\eta),
\end{equation}
where
\begin{equation*}
    K^t_{\alpha\bar\beta}(\zeta,\eta)=
    \frac{\partial^{|\alpha|+|\beta|}K^t}{(\partial\zeta^1)^{\alpha_1}\cdots(\partial\zeta^n)^{\alpha_n}
    (\partial\bar\eta^1)^{\beta_1}\cdots(\partial\bar\eta^n)^{\beta_n}}(\zeta,\eta).
\end{equation*}
By \eqref{eq:new-important}, we have
\begin{equation}\label{eq:vf-pre}
    K^t_{j\bar k\alpha\bar\beta}(\zeta,\eta)=\partial_{t^j}\dbar_{t^k}({\cdot_\beta\eta}, {\cdot_\alpha\zeta})
    =( ({\cdot_\beta\eta})_{\bar k}, ({\cdot_\alpha\zeta})_{\bar j})
    + (\Theta_{j\bar k}({\cdot_\beta\eta}), {\cdot_\alpha\zeta}).
\end{equation}
By \eqref{eq:final-CF}, we have
\begin{equation}\label{eq:vf-curvature}
    (\Theta_{j\bar k}({\cdot_\beta\eta}), {\cdot_\alpha\zeta})=\int_{\partial D_t} \theta_{j\bar k}(\rho) \langle {\cdot_\beta\eta},{\cdot_\alpha\zeta}\rangle d \sigma
    + (c_{j\bar k}(h) {\cdot_\beta\eta}, {\cdot_\alpha\zeta}) + R,
\end{equation}
where
\begin{equation}\label{eq:vf-R}
    R=(c,c')_{i\partial\dbar\phi|_{D_t}}+(b,b')_{\omega^t}-(a,a')_{\omega^t},
\end{equation}
if $i\partial\dbar\phi|_{D_t}>0$, and
\begin{equation*}
    R=(b,b')_{\omega^t}-(a,a')_{\omega^t},
\end{equation*}
if $i\partial\dbar\phi\equiv 0$. Here
\begin{equation*}
    \omega^t=i\partial\dbar(-\log-\rho)|_{D_t},
\end{equation*}
and $(a,b,c)$ (resp. $(a',b',c')$) are forms associated to ${\cdot_\beta\eta}$ (resp. ${\cdot_\alpha\zeta}$) respectively. Moreover,
\begin{equation*}
    \dbar^t a=\partial^t_\phi b+c, \ \dbar^t a'=\partial^t_\phi b'+c'.
\end{equation*}

\textbf{Remark:} Theorem \ref{th:L2} implies that $R$ is non-negative as a Hermitian form. Later we shall give
an explicit expression of the H\"ormander remaining term $R$ in case $i\partial\dbar\phi\equiv 0$. (thus $c=c'\equiv 0$). 

\medskip

\textbf{H\"ormander remaining term for flat weight}: Let us assume that $i\partial\dbar\phi\equiv 0$. By definition, then we have $c=c'\equiv 0$. Put
\begin{equation}
\square'=\partial^t_{\phi}(\partial^t_{\phi})^*+(\partial^t_{\phi})^*\partial^t_{\phi},
\end{equation}
We shall prove that:

\begin{lemma}\label{le:R-expression} If $i\partial\dbar\phi\equiv 0$ on $D$ then
\begin{equation}\label{eq:R-expression}
    R=(\mathbb H b,\mathbb H b')_{\omega^t},
\end{equation}
where $\mathbb H b$ denotes the $\square'$-harmonic part of $b$.
\end{lemma}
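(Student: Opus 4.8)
The plan is to exploit the flat-weight hypothesis, which forces $c=c'\equiv 0$, so that $R=(b,b')_{\omega^t}-(a,a')_{\omega^t}$ with $a$ the $L^2$-minimal solution of $\dbar^t a=\partial^t_\phi b$ (and similarly for $a'$). The key structural fact is Lemma \ref{le:primitive}: $b$ is primitive with respect to $\omega^t$. For primitive $(n-1,1)$-forms on a K\"ahler manifold the Hodge $\star$-operator acts in a simple way, and this is what converts $\partial^t_\phi b$ into something whose minimal solution can be identified. Concretely, I would first recall the Bochner--Kodaira--Nakano identity on the complete K\"ahler manifold $(D_t,\omega^t)$ — completeness holds because $\omega^t=i\partial\dbar(-\log-\rho)|_{D_t}$ is a complete K\"ahler metric near the boundary (this is the standard Donnelly--Fefferman/H\"ormander device, and is exactly why $\omega^t$ was chosen in the Key Lemma). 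Since $\phi$ is pluriharmonic, $\Theta(\mathcal L,h)\equiv 0$, so $\square'=\square''$ on $L_t$-valued forms up to the flat twist, and the Laplacians commute with the relevant projections.

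Next I would decompose $b$ orthogonally into its $\square'$-harmonic part $\mathbb H b$ and a $\square'$-exact/co-exact remainder. Because $b$ is primitive and of bidegree $(n-1,1)$, one checks $\partial^t_\phi(\mathbb H b)=0$ and $(\partial^t_\phi)^*(\mathbb H b)=0$; hence only the non-harmonic part of $b$ contributes to $\partial^t_\phi b$. Writing $b=\mathbb H b+\partial^t_\phi\alpha+(\partial^t_\phi)^*\gamma$ (a Hodge decomposition adapted to $\square'$, valid in the complete K\"ahler $L^2$-setting), one gets $\partial^t_\phi b=\partial^t_\phi(\partial^t_\phi)^*\gamma$, and the $L^2$-minimal solution of $\dbar^t a=\partial^t_\phi b$ is obtained by applying the standard formula $a=(\dbar^t)^*\square_{\dbar}^{-1}(\partial^t_\phi b)$ on the orthogonal complement of harmonic forms. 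The point is then to compute $\|b\|^2_{\omega^t}-\|a\|^2_{\omega^t}$ term by term: the harmonic part $\mathbb H b$ contributes its full norm $\|\mathbb H b\|^2$ to $\|b\|^2$ and nothing to $a$ (since $\partial^t_\phi\mathbb H b=0$), while on the non-harmonic part the Bochner--Kodaira identity $\square'=\square''$ (for primitive forms of this bidegree, using $[\dbar^t,(\partial^t_\phi)^*]$-type commutators) shows the two norms cancel exactly. Polarizing between the pair $(b,a)$ and $(b',a')$ yields $R=(\mathbb H b,\mathbb H b')_{\omega^t}$.

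The main obstacle I anticipate is making the Hodge-theoretic bookkeeping rigorous on the \emph{noncompact} fibre $D_t$: one must verify that the Hodge decomposition with respect to $\square'$ exists in $L^2(D_t,\omega^t)$, that $\mathbb H b$ is well defined, and that the minimal solution $a$ genuinely lands in the span used above — all of which hinge on $\omega^t$ being complete and on the density/regularity statements underlying Hamilton's theorem already invoked in the paper. A secondary technical point is the identity $\partial^t_\phi(\mathbb H b)=0$: this should follow from primitivity of $b$ together with $[\Lambda,\partial^t_\phi]=i(\dbar^t)^*$ (the K\"ahler commutation relation) applied to a $\square'$-harmonic primitive $(n-1,1)$-form, but one must be careful that harmonicity for $\square'$ (built from $\partial^t_\phi$) rather than $\square''$ is what is available, and use the flatness $\Theta(\mathcal L,h)\equiv 0$ to pass between the two. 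Once these analytic foundations are in place, the cancellation computation is routine Kähler identities.
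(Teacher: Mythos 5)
Your strategy is essentially the paper's (flatness gives $c=c'\equiv 0$ and $\square'=\square''$ on the complete K\"ahler fibre $(D_t,\omega^t)$; the minimal solution is $a=(\dbar^t)^*G\,\partial^t_\phi b$; one compares $\|b\|^2-\|a\|^2$ with the harmonic part via $L^2$ Hodge theory), but as written there is a genuine gap at the decisive step. You decompose $b=\mathbb H b+\partial^t_\phi\alpha+(\partial^t_\phi)^*\gamma$ and assert that on the non-harmonic part ``the two norms cancel exactly.'' They do not in general: $\|a\|^2$ depends only on $\partial^t_\phi b=\partial^t_\phi(\partial^t_\phi)^*\gamma$, which never sees $\alpha$, while $\|b\|^2$ contains the extra term $\|\partial^t_\phi\alpha\|^2$. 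Carrying your computation through honestly gives $R=\|\mathbb H b\|^2+\|\partial^t_\phi\alpha\|^2$ (and its polarization), so the lemma follows only after one proves that the $\partial^t_\phi$-exact component of $b$ vanishes, i.e.\ that $(\partial^t_\phi)^*b=0$. That is exactly how primitivity is used in the paper: since $b$ is primitive ($\Lambda_{\omega^t}b=0$) \emph{and} $\dbar^t$-closed, the K\"ahler commutation identity expressing $(\partial^t_\phi)^*$ as $\pm i[\Lambda_{\omega^t},\dbar^t]$ yields $(\partial^t_\phi)^*b=0$; hence $b=\mathbb H b+(\partial^t_\phi)^*f$ with $\partial^t_\phi f=0$, and then $(a,a')=(G\partial^t_\phi b,\partial^t_\phi b')=(f,\partial^t_\phi b')=(b,b')-(\mathbb H b,\mathbb H b')$, which is the whole proof.

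Relatedly, your ``secondary technical point'' $\partial^t_\phi(\mathbb H b)=0$ is not where the difficulty lies: for the complete metric $\omega^t$, any $L^2$ $\square'$-harmonic form is automatically $\partial^t_\phi$- and $(\partial^t_\phi)^*$-closed, and this says nothing about the possible $\partial^t_\phi$-exact piece of $b$ itself. The identity you actually need is for $b$, it uses the commutator $[\Lambda_{\omega^t},\dbar^t]$ (not $[\Lambda_{\omega^t},\partial^t_\phi]$), and it requires $\dbar^t b=0$ in addition to primitivity --- a hypothesis you never record. Your caution about justifying the $L^2$ Hodge decomposition and the Green operator on the noncompact fibre is legitimate and is resolved exactly as you suggest, by completeness of $\omega^t$ (the flatness of $h$ then gives $\square'=\square''$); once $(\partial^t_\phi)^*b=0$ is supplied, the rest of your computation coincides with the paper's.
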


\begin{proof} Since $i\partial\dbar\phi\equiv 0$ and $\omega^t$ is complete K\"ahler, we know that the $\dbar$-Laplace $\square''$ is equal to $\square'$. Denote by $G$ the associated Green operator. Let us omit $\omega^t$ in $(\cdot,\cdot)_{\omega^t}$, then we have
\begin{equation*}
    (a,a')=((\dbar^t)^*G\partial^t_\phi b, a')=(G\partial^t_\phi b, \partial^t_\phi b').
\end{equation*}
Since $b$ is  primitive and $\dbar^t$-closed, we know that $b$ is $(\partial^t_\phi)^*$-closed. Thus $b$ can be written as
\begin{equation*}
    b=\mathbb H b + (\partial^t_\phi)^*f, \ \ \partial^t_\phi f=0.
\end{equation*}
Now
\begin{equation*}
    (a,a')=(G\partial^t_\phi(\partial^t_\phi)^*f, \partial^t_\phi b')=(f,\partial^t_\phi b')=(b-\mathbb H b,b')=(b,b')-(\mathbb H b, \mathbb H b').
\end{equation*}
Thus
\begin{equation*}
    R=(b,b')-(a,a')=(\mathbb H b, \mathbb H b').
\end{equation*}
\end{proof}

Recall that 
\begin{equation}
b=(\dbar V_j)|_{D_t} ~ \lrcorner ~ ({\cdot_\beta\eta}), \ b'=(\dbar V_k)|_{D_t} ~ \lrcorner ~ ({\cdot_\alpha\zeta}).
\end{equation}
Thus Lemma \ref{le:R-expression} implies that:

\begin{theorem}[Variation Formula of the Bergman Kernel]\label{th:VF} The first order variation formula of the Bergman kernel can be written as
\begin{equation}\label{eq:VF1}
    K^t_{j\alpha\bar\beta}(\zeta,\eta)=i^{n^2}\int_{D_t}  \phi_j\{{\cdot_\beta\eta},{\cdot_\alpha\zeta}\}
    -i^{n^2}\int_{\partial D_t} \delta_{V_j}\{{\cdot_\beta\eta},{\cdot_\alpha\zeta}\},
\end{equation}
Moreover, if $i\partial\dbar\phi\equiv 0$ on $D$ then
\begin{equation}\label{eq:VF2}
    K^t_{j\bar k \alpha\bar\beta}(\zeta,\eta)
    =( ({\cdot_\beta\eta})_{\bar k}, ({\cdot_\alpha\zeta})_{\bar j})
    +\int_{\partial D_t} \theta_{j\bar k}(\rho) \langle {\cdot_\beta\eta}, {\cdot_\alpha\zeta}\rangle d \sigma
 + (\mathbb H b,\mathbb H b'),
\end{equation}
where $b=(\dbar V_j)|_{D_t} ~ \lrcorner ~ ({\cdot_\beta\eta}), \ b'=(\dbar V_k)|_{D_t} ~ \lrcorner ~ ({\cdot_\alpha\zeta})$.
\end{theorem}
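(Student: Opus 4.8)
The plan is to read off the two displayed formulas as specializations of the machinery already set up, applied to the holomorphic sections $\cdot_\beta\eta$ and $\cdot_\alpha\zeta$ of the dual of $\mathcal H$. First I would record the consequence of \eqref{eq:new-important}, namely $D_{t^j}(\cdot_\beta\eta)\equiv 0$ and $D_{t^k}(\cdot_\alpha\zeta)\equiv 0$, since these sections are of the form $P(f)$ for holomorphic $f$. Combined with the reproducing identity \eqref{eq:reproducing-kernel}, this gives, for any multi-indices, $\partial_{t^j}(\cdot_\beta\eta,\cdot_\alpha\zeta)=(\cdot_\beta\eta, \dbar_{t^j}(\cdot_\alpha\zeta))$, because the $D_{t^j}$-term drops out. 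Equivalently, differentiating $K^t_{\alpha\bar\beta}(\zeta,\eta)=(\cdot_\beta\eta,\cdot_\alpha\zeta)$ once in $t^j$ reduces to the variation of the pairing, which by Corollary \ref{co:vfi} and \eqref{eq:Lie-first} is $\pi_*(L_{V_j}\{\cdot_\beta\eta,\cdot_\alpha\zeta\})$. Expanding the Lie derivative via Cartan's formula as in section 4.1, the $\dbar$-exact interior contribution splits into the volume term $i^{n^2}\int_{D_t}\phi_j\{\cdot_\beta\eta,\cdot_\alpha\zeta\}$ (coming from the $V_j(\phi)$ piece of $L_j$) and a boundary term $-i^{n^2}\int_{\partial D_t}\delta_{V_j}\{\cdot_\beta\eta,\cdot_\alpha\zeta\}$ by Stokes, since $V_j(\rho)=0$ on $\partial D$ but $\delta_{V_j}$ of the integrand survives as a pullback to $\partial D_t$; this is exactly \eqref{eq:VF1}.

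For the second-order formula \eqref{eq:VF2} under the flat hypothesis $i\partial\dbar\phi\equiv 0$, I would start from \eqref{eq:vf-pre}, which already expresses $K^t_{j\bar k\alpha\bar\beta}(\zeta,\eta)$ as $(({\cdot_\beta\eta})_{\bar k},({\cdot_\alpha\zeta})_{\bar j}) + (\Theta_{j\bar k}(\cdot_\beta\eta),\cdot_\alpha\zeta)$; this identity is just the commutator definition of curvature together with $D_{t^j}(\cdot_\beta\eta)\equiv 0$. Then I would plug in the main curvature formula \eqref{eq:final-CF} (equivalently \eqref{eq:vf-curvature}) with $u_j$ replaced by $\cdot_\beta\eta$ and $u_k$ by $\cdot_\alpha\zeta$. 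Since $\mathcal L$ is trivial with flat metric, $c_{j\bar k}(h)\equiv 0$ and $c\equiv 0$, so the only surviving pieces are the geodesic boundary term $\int_{\partial D_t}\theta_{j\bar k}(\rho)\langle\cdot_\beta\eta,\cdot_\alpha\zeta\rangle\,d\sigma$ and the H\"ormander remaining term $R=(b,b')_{\omega^t}-(a,a')_{\omega^t}$. Finally, invoking Lemma \ref{le:R-expression}, which computes $R=(\mathbb Hb,\mathbb Hb')_{\omega^t}$ using that $b$ is primitive (Lemma \ref{le:primitive}) and $\dbar^t$-closed, hence $(\partial^t_\phi)^*$-closed, and that $\omega^t$ is complete K\"ahler so $\square''=\square'$, yields precisely \eqref{eq:VF2} with $b=(\dbar V_j)|_{D_t}\lrcorner(\cdot_\beta\eta)$ and $b'=(\dbar V_k)|_{D_t}\lrcorner(\cdot_\alpha\zeta)$.

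The main obstacle is not any single computation but making sure the machinery of section 4 legitimately applies to the sections $\cdot_\beta\eta$ and $\cdot_\alpha\zeta$: one must know these are genuine smooth sections of $\mathcal H$ (this is the content of Definition \ref{de:smooth-dual-new} together with Hamilton's theorem guaranteeing $P(D^\alpha\eta)\in\Gamma(\mathcal H)$), and that they are holomorphic sections of the dual in the sense of Definition \ref{de:holomorphic-dual-new}, which is where $D_{t^j}P(f)\equiv 0$ comes from. Once that is in place, \eqref{eq:VF1} is a direct Stokes/Cartan computation and \eqref{eq:VF2} is a bookkeeping specialization of \eqref{eq:final-CF} and Lemma \ref{le:R-expression}; the only care needed is tracking the constants $c_n=i^{n^2}$ and the sign conventions in $\delta_{V_j}$, and verifying that $\langle\cdot,\cdot\rangle$ and $d\sigma$ in the boundary term are the same ones fixed in the statement of Theorem \ref{th:CF}.
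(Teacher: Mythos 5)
Your treatment of \eqref{eq:VF2} coincides with the paper's: it is exactly the combination of \eqref{eq:vf-pre} (which is where $D_{t^j}P(f)\equiv 0$ is actually used), the specialization of \eqref{eq:final-CF} in which the flat hypothesis kills $c_{j\bar k}(h)$ and $c$, and Lemma \ref{le:R-expression} to rewrite $R=(\mathbb H b,\mathbb H b')_{\omega^t}$. The gap is in your derivation of \eqref{eq:VF1}. The Cartan--Stokes expansion of $\pi_*(L_{V_j}\{{\cdot_\beta\eta},{\cdot_\alpha\zeta}\})$ does not split into the two terms you name: what it gives is
\begin{equation*}
K^t_{j\alpha\bar\beta}(\zeta,\eta)=i^{n^2}\int_{\partial D_t}\delta_{V_j}\{{\cdot_\beta\eta},{\cdot_\alpha\zeta}\}
+i^{n^2}\int_{D_t}\frac{\partial}{\partial t^j}\{{\cdot_\beta\eta},{\cdot_\alpha\zeta}\},
\end{equation*}
where the boundary term enters with a \emph{plus} sign, and the interior term is the $t^j$-derivative of the entire fibrewise pairing, not only of the weight: besides $-i^{n^2}\int_{D_t}\phi_j\{{\cdot_\beta\eta},{\cdot_\alpha\zeta}\}$ it contains the $t$-derivatives of the kernel elements ${\cdot_\beta\eta}$ and ${\cdot_\alpha\zeta}$ themselves. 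The missing idea is the reproducing property \eqref{eq:reproducing-kernel}: since these derivatives are still fibrewise holomorphic and square integrable, pairing them against the other kernel element reproduces $K^t_{j\alpha\bar\beta}(\zeta,\eta)$ twice, so the interior term equals $2K^t_{j\alpha\bar\beta}(\zeta,\eta)-i^{n^2}\int_{D_t}\phi_j\{{\cdot_\beta\eta},{\cdot_\alpha\zeta}\}$. Only after moving $2K^t_{j\alpha\bar\beta}$ to the left-hand side do both signs in \eqref{eq:VF1} come out as stated; your bookkeeping, taken literally, produces the boundary term with the wrong sign and omits the kernel-variation contribution altogether.

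Two smaller inaccuracies in the same half. First, you attribute the volume term to the ``$V_j(\phi)$ piece of $L_j$''; but $V_j(\phi)=\phi_j-\sum v_j^\lambda\phi_\lambda$ is not $\phi_j$, and in \eqref{eq:VF1} it is the pure base derivative $\phi_j$ that appears, the fibre-direction parts being accounted for precisely through the reproducing-formula step above. Second, the appeal to $D_{t^j}P(f)\equiv 0$ is not needed for \eqref{eq:VF1}: identity \eqref{eq:Lie-first} holds for arbitrary smooth sections, and the paper's proof of \eqref{eq:VF1} never uses the Chern connection; the vanishing of $D_{t^j}P(f)$ is only relevant for \eqref{eq:vf-pre}, i.e.\ for the second-order formula. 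Your framing of the remaining prerequisites (that ${\cdot_\beta\eta}$, ${\cdot_\alpha\zeta}$ are genuine smooth sections of $\mathcal H$ via Hamilton's theorem, and that $D^\alpha\eta$ is a holomorphic section of the dual) is correct and matches the discussion preceding the theorem.
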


\begin{proof} Notice that \eqref{eq:VF2} is a direct consequence of Lemma \ref{le:R-expression}. Thus it suffices to prove \eqref{eq:VF1}. Notice that \eqref{eq:Lie-first} implies that
\begin{equation*}
K^t_{j\alpha\bar\beta}(\zeta,\eta)=i^{n^2} \int_{D_t} L^t_{V_j}\{ {\cdot_\beta\eta}, {\cdot_\alpha\zeta}\}.
\end{equation*}
By Cartan's formula
\begin{equation*}
    L^t_{V_j}=i_t^*(d\delta_{V_j}+\delta_{V_j}d),
\end{equation*}
thus we have
\begin{equation*}
    K^t_{j\alpha\bar\beta}(\zeta,\eta)=i^{n^2}\int_{\partial D_t} \delta_{V_j}\{{\cdot_\beta\eta},{\cdot_\alpha\zeta}\}+
    i^{n^2}\int_{D_t}  \frac{\partial}{\partial t^j}\{{\cdot_\beta\eta},{\cdot_\alpha\zeta}\}.
\end{equation*}
By the reproducing formula,
\begin{equation*}
     i^{n^2}\int_{D_t}  \frac{\partial}{\partial t^j}\{{\cdot_\beta\eta},{\cdot_\alpha\zeta}\}=2K^t_{j\alpha\bar\beta}(\zeta,\eta)-i^{n^2}\int_{D_t}  \phi_j\{{\cdot_\beta\eta},{\cdot_\alpha\zeta}\},
\end{equation*}
which implies \eqref{eq:VF1}.
\end{proof}

\textbf{Remark}: If $\alpha=\beta=0$ and $\phi\equiv0$ then \eqref{eq:VF1} is Komatsu's formula (see \cite{Komatsu82}). Recently, Berndtsson \cite{Bern15} showed that \eqref{eq:VF1} can be used to study the comparison principle for Bergman kernels. In fact, if $D$ is a product then \eqref{eq:VF1} is just (2.2) in \cite{Bern15}.

\subsection{Variation of the Bergman projection of currents}\ \

\medskip

In the last section, we discussed the plurisubharmonicity properties of the Bergman projection of the derivatives of the Dirac measure. Recently, it is known that the plurisubharmonicity properties of the Bergman projection of other kind of currents are also very useful (see \cite{BL14}). In this subsection, we shall show how to use Corollary \ref{co:dual-psh} to study variation of the Bergman projection of general currents with compact support.

\medskip

\textbf{Smooth family of currents with compact support}: Denote by $A_t$ the space of smooth sections of $K_{D_t}+L_t$ over $D_t$. Put
\begin{equation*}
    \mathcal A =\{A_t\}_{t\in\mathbb B}.
\end{equation*}
We shall introduce the notion of the dual of $\mathcal A$ by using the language of currents. Denote by $A'_t$ the dual space of $A_t$, that is the space of $L_t^*$-valued degree $(0,n)$-currents with \textbf{compact support} in $D_t$. Fix $f^t\in A'_t$, we shall \emph{formally} write
\begin{equation*}
    f^t(u^t)= \int_{D_t} f^t\wedge u^t,\ \forall \ u^t\in A_t,
\end{equation*}
even though the $(n,n)$-current $ f^t\wedge u^t$ may not be integrable in general. Put
\begin{equation*}
    \mathcal A'=\{A'_t\}_{t\in\mathbb B}.
\end{equation*}
Denote by ${\rm Supp} f^t$ the support of $f^t$. Denote by $K_{\mathcal X/\mathbb B}$ the relative canonical line bundle associated to $\pi$, recall that
\begin{equation}\label{eq:relative-canonical}
K_{\mathcal X/\mathbb B}:=K_{\mathcal X}-\pi^*K_{\mathbb B}, \ K_{\mathcal X/\mathbb B}|_{D_t} \simeq K_{D_t}.
\end{equation}
We shall introduce the following definiton:

\begin{definition} We call $f: t\to f^t\in A'_t$ a smooth family of currents with compact support if
\begin{equation}\label{eq:smooth-dual1}
    \bigcup_{t\in K} {\rm Supp} f^t  \Subset D, \ \forall \ K\Subset\mathbb B,
\end{equation}
and for every smooth section $\kappa$ of $(K_{\mathcal X/\mathbb B}+\mathcal L)\boxtimes (\overline{K_{\mathcal X/\mathbb B}}+\mathcal L^*) $ over
\begin{equation*}
\mathcal X\times_{\pi} \mathcal X:= \{(x,y)\in\mathcal X\times \mathcal X : \pi(x)=\pi(y)\},
\end{equation*}
there exists a smooth section, say $u_{f,\kappa}$, of $\overline{K_{\mathcal X/\mathbb B}}+\mathcal L^*$ over $\mathcal X$ such that
\begin{equation}\label{eq:smooth-dual2}
    f^t(\kappa^t(v^t))=u_{f,\kappa}^t(v^t), \ \forall \ v\in C^\infty(\mathcal X, K_{\mathcal X/\mathbb B}+\mathcal L), \ t\in \mathbb B.
\end{equation}
\end{definition}

\textbf{Remark}: Let us explain the meaning of \eqref{eq:smooth-dual2}. The right hand side is clear, that is
\begin{equation*}
    u_{f,\kappa}^t(v^t):=\int_{D_t} u_{f,\kappa}^t\wedge v^t.
\end{equation*}
For the left hand side, by our assumption \textbf{A1}, the restriction of $\pi$ to the closure of $D$ is proper, thus we know that
\begin{equation*}
    \kappa^t(v^t): x \mapsto \int_{D_t} \kappa^t(x,\cdot)\wedge v^t(\cdot), \ \forall \ x\in \pi^{-1}(t),
\end{equation*}
defines a section in $A_t$. Thus $f^t(\kappa^t(v^t))$ is well defined. Hence \eqref{eq:smooth-dual2} means that \emph{the current defined by $f(\kappa)$ is smooth up to the boundary of $D$}. 

\medskip

\textbf{Bergman projection of smooth family of currents with compact support}: We shall prove the following proposition:

\begin{proposition} Assume that $D$ satisfies $\mathbf{A1}$ and $\mathbf{A2}$.  Let $f: t\to f^t\in A'_t$ be a smooth family of currents with compact support. Then
\begin{equation}
f^t: u^t \mapsto f^t(u^t), \ \ \forall \ u^t\in\mathcal H_t,
\end{equation}
defines a smooth section of the dual of $\mathcal H$ in the sense of Definiton \ref{de:smooth-dual-new}.
\end{proposition}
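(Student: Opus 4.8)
The plan is to verify the two conditions in Definition \ref{de:smooth-dual-new}: that there is a smooth section $P(f)$ of $\mathcal H$ with $f^t(u^t)=(u^t,P(f)^t)$ for all $u^t\in\mathcal H_t$, and that $P(f)$ is indeed smooth up to the boundary in the sense of Definition \ref{de:smooth}. First I would produce a natural candidate for $P(f)$ by applying the smoothness hypothesis \eqref{eq:smooth-dual2} of the family of currents to a well-chosen kernel $\kappa$, namely the \emph{Bergman kernel} $\mathbb K$ of the family, viewed as a section of $(K_{\mathcal X/\mathbb B}+\mathcal L)\boxtimes(\overline{K_{\mathcal X/\mathbb B}}+\mathcal L^*)$ over $\mathcal X\times_\pi\mathcal X$. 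Concretely, for each $t$ the current $f^t$ acts on the reproducing kernel to give the element $x\mapsto \overline{f^t(\mathbb K^t(x,\cdot))}$ of $\mathcal H_t$; by the reproducing property this is exactly the Riesz representative of $f^t|_{\mathcal H_t}$, i.e. the desired $P(f)^t$.

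The first genuine obstacle is that the fibrewise Bergman kernel $\mathbb K^t$ is in general \emph{not} smooth up to the boundary of $D_t$ (indeed it blows up on the diagonal), so one cannot directly plug $\kappa=\mathbb K$ into \eqref{eq:smooth-dual2}. I would get around this using the support condition \eqref{eq:smooth-dual1}: since $\bigcup_{t\in K}{\rm Supp}\,f^t\Subset D$ for every $K\Subset\mathbb B$, the current $f^t$ only pairs $\mathbb K^t(x,\cdot)$ with $x$ ranging in a \emph{fixed compact subset of the interior} (in the second variable), where $\mathbb K$ is smooth. So one should first localize: cover $\mathbb B$ by small balls, and over each one choose a smooth cutoff $\chi$ on $\mathcal X$ which is $1$ on a neighborhood of $\bigcup_t{\rm Supp}\,f^t$ and compactly supported in $D$; then $\chi(y)\mathbb K(x,y)$ is a legitimate smooth section of the required bundle over $\mathcal X\times_\pi\mathcal X$ (it is smooth up to the boundary because the support in $y$ is interior, and it is globally smooth in $x$ since the Bergman kernel of a smooth family of smoothly bounded Stein domains depends smoothly on all parameters away from the diagonal, by Hamilton's theorem — see Appendix \ref{ss:SBK}). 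Applying \eqref{eq:smooth-dual2} to $\kappa(x,y)=\chi(y)\mathbb K(x,y)$ produces a section $u_{f,\kappa}$ of $\overline{K_{\mathcal X/\mathbb B}}+\mathcal L^*$ smooth up to $\partial D$, and its conjugate is the sought-after representative of $P(f)$; note $\overline{\chi\mathbb K}$ pairs with $v^t$ to give $\overline{f^t}$ applied to something, but since $\chi\equiv1$ near ${\rm Supp}\,f^t$ the cutoff is invisible to $f^t$, so $u_{f,\kappa}^t(v^t)=f^t(\mathbb K^t(v^t))= \overline{(v^t, P(f)^t)}$ up to conjugation conventions.

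Having built a representative of $P(f)$ that is smooth up to the boundary, I would then check that $P(f)^t$ genuinely lies in $\mathcal H_t$ (it is holomorphic on $D_t$ because $\mathbb K^t(\cdot,y)$ is, and $L^2$ because $\mathcal H_t$ is a Hilbert space and Riesz representatives exist) and that $f^t(u^t)=(u^t,P(f)^t)$ for \emph{all} $u^t\in\mathcal H_t$, not just smooth ones — this is immediate from the reproducing property $\int_{D_t}\mathbb K^t(x,\cdot)\wedge u^t(\cdot)=u^t(x)$ together with density of smooth sections (which holds here by assumption $\mathbf{A2}$; see the proof of Lemma \ref{le:extension}) to pass from the a priori pairing against smooth $v^t$ in \eqref{eq:smooth-dual2} to arbitrary $u^t$. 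Finally, independence of the construction from the chosen cutoff $\chi$ and the chosen local ball is automatic since $P(f)^t$ is uniquely determined as the Riesz representative, so the locally-defined sections patch to a global smooth section of $\mathcal H$. The step I expect to be the main work is the careful verification that $\chi\mathbb K$ really is a smooth section up to the boundary of the bundle over $\mathcal X\times_\pi\mathcal X$ — i.e., combining the interior support in the second variable with Hamilton-type boundary regularity of the Bergman kernel in the first variable and smooth dependence on $t$; everything else is bookkeeping with the reproducing formula and conjugation conventions.
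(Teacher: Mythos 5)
Your overall strategy is the same as the paper's: cut off the fibrewise Bergman kernel so that it becomes an admissible smooth kernel $\kappa$, feed it into \eqref{eq:smooth-dual2}, and read off $P(f)$ from the resulting boundary-smooth section $u_{f,\kappa}$, the regularity input being interior support in one variable plus Hamilton-type stability of the Bergman kernel (Appendix \ref{ss:SBK}). However, there is a concrete error in the execution: you place the cutoff in the wrong variable. In the paper's convention, $\kappa^t(v^t)(x)=\int_{D_t}\kappa^t(x,\cdot)\wedge v^t(\cdot)$, so the pairing with $v^t$ is taken in the \emph{second} variable and the current $f^t$ then acts in the \emph{first} variable $x$. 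With your choice $\kappa(x,y)=\chi(y)K(x,y)$ one gets $\kappa^t(v^t)=P^t(\chi v^t)$, the Bergman projection of the cut-off form; since the Bergman projection is not a local operator, $P^t(\chi v^t)\neq v^t$ even on a neighborhood of ${\rm Supp}\, f^t$ (indeed $P^t(\chi v^t)=v^t-P^t((1-\chi)v^t)$, and the second term is a nonzero holomorphic form in general). So your key step \TL the cutoff is invisible to $f^t$\TR fails, and the section you construct represents the functional $v\mapsto f^t(P^t(\chi v))$ rather than $f^t$ itself. The same variable confusion already appears in your formula for the Riesz representative, where you let $f^t$ act in the second (anti-holomorphic) slot of the kernel.

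The fix is exactly the paper's choice: take $\kappa(x,y)=\chi(x)K^{\pi(x)}(x,y)$ with $\chi\equiv1$ near $\bigcup_t{\rm Supp}\, f^t$ and ${\rm Supp}(\chi|_{D_t})\Subset D_t$. Then $\kappa^t(v^t)=\chi\, v^t$ for $v^t\in\mathcal H_t$ by the reproducing property, the cutoff is genuinely invisible to $f^t$ because it sits in the variable on which $f^t$ acts, and the boundary regularity of $\kappa$ is justified exactly as you argue (off-diagonal smoothness of the Bergman kernel together with Hamilton's theorem), since by the Hermitian symmetry of the kernel confining $x$ to a fibrewise compact set is just as good as confining $y$. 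With that correction the rest of your outline --- identifying $P(f)^t$ as the metric-dual of $u^t_{f,\kappa}$, checking $P(f)^t\in\mathcal H_t$ via vanishing on $\mathcal H_t^{\bot}$ (or holomorphy of the kernel in its first variable), and passing from smooth sections to all of $\mathcal H_t$ by density --- coincides with the paper's argument; the localization over small balls in $\mathbb B$ and the patching step are unnecessary, and note that the boundedness of $f^t|_{\mathcal H_t}$, which you implicitly assume when invoking the Riesz representative, is a consequence of the corrected representation formula rather than something available beforehand.
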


\begin{proof} \eqref{eq:smooth-dual1} implies that there exists a smooth real function, say $\chi$, on $D$ such that
\begin{equation*}
    \chi \equiv 1 \ \text{on} \ \bigcup_{t\in\mathbb B} {\rm Supp} f^t,  \ \ {\rm Supp} ( \chi|_{D_t} )\Subset D_t, \ \forall\ t\in\mathbb B.
\end{equation*}
Denote by $K^t$ the Bergman kernel of $\mathcal H_t$. Put
\begin{equation*}
    \chi K: (x,y) \mapsto \chi(x) K^{\pi(x)}(x,y), \ \forall \ (x,y)\in D\times_{\pi}D.
\end{equation*}
By Hamilton's theorem (see Appendix \ref{ss:SBK}), assumptions \textbf{A1} and \textbf{A2} imply that $\chi K$ is smooth up to the boundary, i.e., it extends to a smooth section of $(K_{\mathcal X/\mathbb B}+\mathcal L)\boxtimes (\overline{K_{\mathcal X/\mathbb B}}+\mathcal L^*) $ over
$\mathcal X\times_{\pi} \mathcal X$. By the reproducing property of $K^t$, we have
\begin{equation*}
    (\chi K)^t(v^t)=(\chi v)^t, \ \forall\ v\in \Gamma(\mathcal H).
\end{equation*}
Thus by \eqref{eq:smooth-dual2}, we have
\begin{equation}\label{eq:fv??}
    f^t(v^t)=f^t((\chi v)^t)=f^t((\chi K)^t(v^t))=u^t_{f, \chi K}(v^t)=\int_{D_t} u^t_{f, \chi K} \wedge v^t, \ \forall \ v^t\in\mathcal H_t.
\end{equation}
Let us write
\begin{equation*}
    u^t_{f, \chi K} \wedge v^t =i^{n^2} \{v^t,P(f)^t\}, \ \forall\ v^t\in C^{\infty}(D_t, K_{D_t}+L_t).
\end{equation*}
Thus we have
\begin{equation}\label{eq:formal-good}
    f^t((\chi K)^t(v^t))=(v^t, P(f)^t), \ \forall\ v^t\in C^{\infty}(D_t, K_{D_t}+L_t).
\end{equation}
Since $(\chi K)^t(\mathcal H_t^{\bot})=0$, we have $P(f)^t\in \mathcal H_t$. Thus $P(f)\in\Gamma(\mathcal H)$, and by \eqref{eq:fv??}, we have
\begin{equation}\label{eq:formal-good-1}
    f^t(v^t))=(v^t, P(f)^t), \ \forall \ v\in \Gamma(\mathcal H).
\end{equation}
By Definition \ref{de:smooth-dual-new}, we know that $f$ defines a smooth section of the dual of $\mathcal H$.
\end{proof}

\textbf{Remark} By Lemma \ref{le:extension}, if $D$ satisfies $\mathbf{A1}$ and $\mathbf{A2}$ then $\mathcal H_t$ is equal to the closure of $\{u^t\in\mathcal H_t: u\in\Gamma(\mathcal H)\}$. Thus \eqref{eq:formal-good-1} implies that
\begin{equation}\label{eq:norm-dual}
||P(f)||^2(t)= \sup\{|f^t(u)|^2: u\in \mathcal H_t , \ i^{n^2}\int_{D_t}\{u,u\}=1\}.
\end{equation}
By this extremal property, one may generalize Corollary \ref{co:dual-psh} to the case that the metric $h$ on $\mathcal L$ is singular. 

\section{Triviality and flatness}

In this section, we shall prove Theorem \ref{th:flat} and use it to study triviality of holomorphic motions.

\subsection{Proof of Theorem \ref{th:flat}}\ \

\medskip

\textbf{Triviality implies flatness}: By definition, if $D$ is trivial then one may assume that the vector fields $\partial/\partial t^j$ are well defined on $D$, tangent to the boundary of $D$ and can be extended to smooth vector fields on $\mathcal X$. Thus we have
\begin{equation}
\theta_{j\bar k}(\rho)\equiv 0,
\end{equation}
on $\partial D$. Moreover, in this case $b\equiv 0$. If $\Theta(\mathcal L, h)\equiv 0$ then we also have $c\equiv 0$. Thus $a\equiv 0$ and $R\equiv 0$. By our main theorem, we know that $\mathcal H$ is flat.

\medskip

\textbf{Flatness implies triviality}: By Theorem \ref{th:CF} and our assumption, we have
\begin{equation*}
    \sum(\Theta_{j\bar k} u_j, u_k)= \sum\int_{\partial D_t} \theta_{j\bar k}(\rho) \langle u_j,u_k\rangle d \sigma + R,
\end{equation*}
where $R\geq 0$. Moreover, since $D$ is Stein, we have
\begin{equation*}
    \sum\int_{\partial D_t} \theta_{j\bar k}(\rho) \langle u_j,u_k\rangle d \sigma \geq 0.
\end{equation*}
Thus if $\Theta_{j\bar k}\equiv 0$ then $R\equiv 0$ and
\begin{equation}\label{eq:-log-rho-1}
   \theta_{j\bar k}(\rho)=\langle V_j^{\rho}, V_k^{\rho} \rangle_{i\partial\op\rho}\equiv 0 \ \text{on} \ \partial D.
\end{equation}
Since
\begin{equation}\label{eq:-log-rho}
    \langle V_j, V_k \rangle_{i\partial\op(-\log-\rho)}=\frac{\langle V_j, V_k \rangle_{i\partial\op\rho}}{-\rho}+
                 \frac{V_j(\rho) \overline{V_k(\rho)}}{\rho^2},
\end{equation}
and $V_j=V_j^\rho$ on $\partial D$, by \eqref{eq:-log-rho} and \eqref{eq:-log-rho-1}, we know that $\langle V_j, V_k \rangle_{i\partial\op(-\log-\rho)}$ is smooth up to the boundary of $D$. We shall use the following lemma, which follows from
\begin{equation}
V_j^\psi=\partial/\partial t^j-\sum \psi_{j\bar\lambda}\psi^{\bar\lambda\nu}\partial/\partial \mu^\nu,
\end{equation}
by direct computation.

\begin{lemma}\label{le:geodesic} Let $\psi$ be a smooth function on $D$. Assume that $\psi$ is strictly plurisubharmonic on each fibre of $D$. Denote by $V_j^\psi$ the horizontal lift of $\partial/\partial t^j$ with respect to $i\partial\dbar\psi$. Then
\begin{equation*}
    [V_j^\psi, V_k^\psi]=0,  \ \ [V_j^\psi, \overline{V_k^\psi}]=\sum c_{j\bar k}(\psi)_{\bar\lambda}\psi^{\bar\lambda\nu}\partial/\partial\mu^\nu-c_{j\bar k}(\psi)_\nu\psi^{\bar\lambda\nu}\partial/\partial\bar\mu^{\lambda},
\end{equation*}
where $c_{j\bar k}(\psi):=\langle V_j^\psi, V_k^\psi \rangle_{i\partial\dbar\psi}$.
\end{lemma}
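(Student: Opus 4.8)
The plan is to avoid expanding the brackets head-on and instead run the computation through Cartan calculus, exploiting that $\omega:=i\partial\dbar\psi$ is $d$-closed. First I would record two basic facts about $V_j^\psi$. Since $\psi$ is real and smooth, $\omega$ is a $d$-closed $(1,1)$-form which is positive definite along fibres (so $\psi^{\bar\lambda\nu}$ makes sense fibrewise), and $\bar\omega=\omega$. Horizontality of $V_j^\psi$ with respect to $\omega$ says precisely that the $(0,1)$-form $\delta_{V_j^\psi}\omega$ has no $d\bar\mu^\nu$-component, so $\delta_{V_j^\psi}\omega=i\sum_l g_{j\bar l}\,d\bar t^l$ for some functions $g_{j\bar l}$; contracting against $\overline{V_k^\psi}$ identifies $g_{j\bar k}=\langle V_j^\psi,V_k^\psi\rangle_\omega=c_{j\bar k}(\psi)$. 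Conjugating then gives $\delta_{\overline{V_k^\psi}}\omega=-i\sum_l c_{l\bar k}(\psi)\,dt^l$. I will also use repeatedly that a $(1,0)$-vector field which is simultaneously vertical and $\omega$-horizontal must vanish, since $(\psi_{\lambda\bar\nu})$ is fibrewise positive definite.

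For $[V_j^\psi,V_k^\psi]=0$: this bracket is vertical, since $\pi_*[V_j^\psi,V_k^\psi]=[\partial/\partial t^j,\partial/\partial t^k]=0$. Because $d\omega=0$, Cartan's formula gives $L_{V_j^\psi}\omega=d\,\delta_{V_j^\psi}\omega$, hence
\begin{equation*}
\delta_{[V_j^\psi,V_k^\psi]}\omega=L_{V_j^\psi}\,\delta_{V_k^\psi}\omega-\delta_{V_k^\psi}\,d\,\delta_{V_j^\psi}\omega .
\end{equation*}
Since $V_j^\psi,V_k^\psi$ are of type $(1,0)$ they kill $dt^l$ and $d\bar t^l$ (up to constants), so $L_{V_j^\psi}(d\bar t^l)=0$ and $\delta_{V_k^\psi}(d\bar t^l)=0$; feeding in $\delta_{V_k^\psi}\omega=i\sum_l c_{k\bar l}(\psi)\,d\bar t^l$ one gets $\delta_{[V_j^\psi,V_k^\psi]}\omega=i\sum_l\big(V_j^\psi(c_{k\bar l}(\psi))-V_k^\psi(c_{j\bar l}(\psi))\big)\,d\bar t^l$, which has no $d\bar\mu$-component. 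Hence $[V_j^\psi,V_k^\psi]$ is $\omega$-horizontal as well as vertical, so it is zero.

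For the mixed bracket I would write $[V_j^\psi,\overline{V_k^\psi}]=X+\bar Y$ with $X=\sum_\nu X^\nu\,\partial/\partial\mu^\nu$ of type $(1,0)$ and $\bar Y=\sum_\lambda W^\lambda\,\partial/\partial\bar\mu^\lambda$ of type $(0,1)$ (the bracket is again vertical). The same Cartan identity gives $\delta_{[V_j^\psi,\overline{V_k^\psi}]}\omega=L_{V_j^\psi}\,\delta_{\overline{V_k^\psi}}\omega-\delta_{\overline{V_k^\psi}}\,d\,\delta_{V_j^\psi}\omega$; using $\delta_{\overline{V_k^\psi}}\omega=-i\sum_l c_{l\bar k}(\psi)\,dt^l$ and $L_{V_j^\psi}(dt^l)=0$, the first term is a $(1,0)$-form, while $d\,\delta_{V_j^\psi}\omega=i\sum_l dc_{j\bar l}(\psi)\wedge d\bar t^l$ contracted against the $(0,1)$-field $\overline{V_k^\psi}$ gives $i\sum_l\overline{V_k^\psi}(c_{j\bar l}(\psi))\,d\bar t^l-i\,dc_{j\bar k}(\psi)$. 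Splitting $dc_{j\bar k}(\psi)$ into its $dt,d\mu,d\bar t,d\bar\mu$ parts, one reads off that the $d\bar\mu^\nu$-component of $\delta_{[V_j^\psi,\overline{V_k^\psi}]}\omega$ is $i\,c_{j\bar k}(\psi)_{\bar\nu}$ and its $d\mu^\nu$-component is $i\,c_{j\bar k}(\psi)_\nu$. On the other hand $\delta_X\omega$ is a $(0,1)$-form with $d\bar\mu^\lambda$-component $i\sum_\nu X^\nu\psi_{\nu\bar\lambda}$, and $\delta_{\bar Y}\omega$ is a $(1,0)$-form with $d\mu^\nu$-component $-i\sum_\lambda\psi_{\nu\bar\lambda}W^\lambda$. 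Matching and inverting the fibre metric $(\psi_{\lambda\bar\nu})$ then yields $X^\nu=\sum_\lambda c_{j\bar k}(\psi)_{\bar\lambda}\psi^{\bar\lambda\nu}$ and $W^\lambda=-\sum_\nu c_{j\bar k}(\psi)_\nu\psi^{\bar\lambda\nu}$, which is exactly the asserted formula.

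I expect the genuine content here to be light: the only real obstacle is organizational, namely keeping straight which interior products, Lie derivatives and form-components survive the $(1,0)/(0,1)$ type bookkeeping, and carrying out the two fibre-metric contractions correctly. As the paper indicates, an equivalent route is to substitute $v_j^\nu=\sum\psi_{j\bar\lambda}\psi^{\bar\lambda\nu}$ directly into $V_j^\psi=\partial/\partial t^j-\sum v_j^\nu\,\partial/\partial\mu^\nu$ and expand by Leibniz, reducing both identities to the symmetry of third-order derivatives of $\psi$ and the inverse-matrix identity $\partial_a\psi^{\bar\lambda\nu}=-\sum\psi^{\bar\lambda\beta}\psi^{\bar\alpha\nu}\,\partial_a\psi_{\beta\bar\alpha}$; the Cartan route merely repackages the same cancellations.
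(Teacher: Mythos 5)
Your argument is correct, and it is checkable as written: horizontality of $V_j^\psi$ is indeed equivalent to the vanishing of the $d\bar\mu$-components of $\delta_{V_j^\psi}\omega$, the identification $\delta_{V_j^\psi}\omega=i\sum c_{j\bar l}(\psi)\,d\bar t^l$ and its conjugate are right, the identity $\delta_{[X,Y]}=L_X\delta_Y-\delta_Y L_X$ together with $d\omega=0$ gives exactly the two contractions you compute, and the final matching of $d\mu^\nu$- and $d\bar\mu^\lambda$-components against the vertical decomposition $X+\bar Y$, using fibrewise positivity of $(\psi_{\lambda\bar\nu})$, reproduces the stated formula (and, for the first bracket, verticality plus horizontality plus positivity forces the bracket to vanish). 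The paper itself offers no written proof: it states that the lemma follows ``by direct computation'' from the explicit expression $V_j^\psi=\partial/\partial t^j-\sum\psi_{j\bar\lambda}\psi^{\bar\lambda\nu}\partial/\partial\mu^\nu$, i.e.\ by expanding the brackets in coordinates and using symmetry of third derivatives of $\psi$ together with the derivative-of-the-inverse identity for $(\psi^{\bar\lambda\nu})$ --- precisely the alternative you mention in your last paragraph. Your Cartan-calculus route is genuinely different in flavor: it trades the index-heavy expansion for type/verticality bookkeeping, never differentiates the inverse matrix, uses only $d\omega=0$ plus fibrewise strict plurisubharmonicity, and gives as a by-product the symmetry $V_j^\psi(c_{k\bar l}(\psi))=V_k^\psi(c_{j\bar l}(\psi))$; the coordinate computation is more elementary but offers no such structural insight. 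One small presentational point: when you discard the term $L_{V_j^\psi}\,\delta_{\overline{V_k^\psi}}\omega$ from the $d\mu^\nu$-component, ``it is a $(1,0)$-form'' is not by itself sufficient --- what you actually use (and what your computation shows, since $L_{V_j^\psi}(dt^l)=0$) is that it has only $dt^l$-components; it would be worth saying that explicitly.
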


Let us apply this lemma to $\psi=-\log-\rho$. Now by definition of $V_j$ in Lemma \ref{le:key-lemma}, we have $V_j \equiv V_j^\psi$. Since $(-\log-\rho)^{\bar\lambda\nu} \equiv 0$ on $\partial D$. By \eqref{eq:-log-rho} and the above lemma, we have
\begin{equation}\label{eq:flat-bdy}
    [V_j, V_k]=0,  \ \text{on} \ D, \ \text{and} \ \ [V_j, \overline{V_k}]=0,\ \text{on}  \ \partial D.
\end{equation}
on the boundary of $D$. Moreover, we shall prove that the following lemma is true.

\begin{lemma}\label{le:equality1} Assume that $D$ satisfies $\mathbf{A1}$ and $\mathbf{A2}$. Assume further that $K_{\mathcal X/\mathbb B} +\mathcal L$ is trivial on each fibre of $\pi$ and $\Theta(\mathcal L, h)\equiv 0$ on $D$. If $R\equiv0$ then each $V_j^\rho$ has a smooth extension, say $\tilde V_j$, that is holomorphic on fibres and smooth up to the boundary of $D$.
\end{lemma}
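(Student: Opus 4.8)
The plan is to construct, for each $j$, a holomorphic-on-fibres extension $\tilde V_j$ of $V_j^\rho$ by solving a $\dbar$-equation on each fibre and checking that the solution has no component obstructing holomorphicity. First I would recall that $V_j^\rho$ is an $(1,0)$-tangent vector field on $\partial D$, and that by the Key Lemma (Lemma \ref{le:key-lemma}) the field $V_j$ built from $\psi=-\log-\rho$ is smooth up to $\partial D$ with $V_j|_{\partial D}=V_j^\rho$. Since we are given $R\equiv 0$, the hypotheses of the calculation in \eqref{eq:flat-bdy} hold, so $[V_j,\overline{V_k}]=0$ on $\partial D$; equivalently $\dbar V_j$ vanishes to the appropriate order along the boundary, so $(\dbar V_j)|_{D_t}$ — which, viewed through the trivialisation $K_{\mathcal X/\mathbb B}+\mathcal L\simeq\mathcal O$ on each fibre, is exactly the form $b$ appearing in \eqref{eq:cb} — has the property that its boundary contribution drops out. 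The point is to correct $V_j$ by a fibrewise vertical vector field $W_j$, smooth up to the boundary and vanishing on $\partial D$, so that $\tilde V_j:=V_j-W_j$ satisfies $\dbar^t\tilde V_j=0$ on each $D_t$.

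Concretely, on each fibre $D_t$ the $(0,1)$-form-valued vertical field $(\dbar V_j)|_{D_t}$ is $\dbar^t$-closed (since $\dbar^2=0$), and under the fibrewise triviality of $K_{\mathcal X/\mathbb B}+\mathcal L$ together with $\Theta(\mathcal L,h)\equiv0$ this corresponds, contracting against the trivialising $(n,0)$-form, to the primitive $\dbar^t$-closed form $b$ of Lemma \ref{le:primitive}. The condition $R\equiv 0$ forces, via Lemma \ref{le:R-expression} (or directly via \eqref{eq:new-3} with $c\equiv 0$), that the $\square'$-harmonic part $\mathbb H b$ vanishes, so $b=\partial^t_\phi(\cdot)^*f$ is $\dbar^t$-exact with an $L^2$-minimal primitive $W_j$ on $D_t$ (using that $\omega^t=i\partial\dbar(-\log-\rho)|_{D_t}$ is a complete Kähler metric so Hörmander/Demailly $L^2$-theory applies on the complete manifold $D_t$). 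Unwinding the trivialisation, $W_j$ is a vertical $(1,0)$-field with $\dbar^t W_j=(\dbar V_j)|_{D_t}$. Setting $\tilde V_j:=V_j-W_j$ gives $\dbar^t\tilde V_j=0$, i.e. $\tilde V_j$ is holomorphic on each fibre.

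The two things that need care are \emph{boundary regularity} and the \emph{boundary values} of $W_j$. For regularity I would invoke Hamilton's theorem (as used throughout the paper, e.g. in Proposition \ref{pr:chern} and Lemma \ref{le:extension}): since the data $(\dbar V_j)|_{D_t}$ varies smoothly up to $\partial D$ and the fibres form a smooth family of smoothly bounded strongly pseudoconvex domains, the fibrewise $L^2$-minimal solutions $W_j$ depend smoothly on $t$ and are smooth up to $\partial D$. For the boundary values, the key input is precisely $[V_j,\overline{V_k}]=0$ on $\partial D$ from \eqref{eq:flat-bdy}: this says $\dbar V_j$ vanishes along $\partial D$ in the directions that matter, so that its $L^2$-minimal primitive — which is orthogonal to holomorphic fields — must itself vanish on $\partial D$; hence $\tilde V_j=V_j-W_j$ still restricts to $V_j^\rho$ on $\partial D$. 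I expect this last step, showing $W_j|_{\partial D}=0$ rather than merely $W_j$ being bounded, to be the main obstacle; it should follow by pairing $W_j$ against fibrewise holomorphic fields that are tangent to $\partial D$ (which exist by the assumed triviality of $K_{\mathcal X/\mathbb B}+\mathcal L$ on fibres, giving enough holomorphic vertical fields) and using the minimality of $W_j$ together with the vanishing of $\dbar V_j$ on $\partial D$, combined with a Hopf-lemma-type argument at the strongly pseudoconvex boundary.
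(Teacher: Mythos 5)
Your overall architecture is the same as the paper's (contract with a fibrewise trivializing holomorphic section $e$ of $K_{\mathcal X/\mathbb B}+\mathcal L$, use $R\equiv 0\Rightarrow \mathbb H b\equiv 0$ to solve $\dbar^t W_j=(\dbar V_j)|_{D_t}$ on each fibre, and set $\tilde V_j:=V_j-W_j$), but the two steps you yourself single out --- smoothness of $W_j$ up to the boundary and $W_j=0$ on $\partial D$ --- are precisely the content of the lemma, and the arguments you sketch for them do not work. First, \eqref{eq:flat-bdy} is not available under the hypotheses of this lemma: in the paper it is derived from $\theta_{j\bar k}(\rho)\equiv 0$ on $\partial D$, which in the proof of Theorem \ref{th:flat} comes from flatness together with Steinness of $D$, not from $R\equiv 0$ alone. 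Even granting $[V_j,\overline{V_k}]=0$ on $\partial D$, this only constrains derivatives in the directions $\overline{V_k}$ and does not imply that $(\dbar V_j)|_{D_t}$ vanishes along $\partial D_t$, so the premise that "the boundary contribution drops out" is unsubstantiated. Second, Hamilton's theorem concerns families of non-coercive boundary value problems ($\dbar$-Neumann) for metrics smooth up to the boundary; your correction $W_j$ is the $L^2$-minimal solution with respect to the complete metric $\omega^t=i\partial\dbar(-\log-\rho)|_{D_t}$, which blows up at $\partial D_t$, and Hamilton's theorem gives neither smoothness up to the boundary nor boundary vanishing for that solution. Your final appeal to orthogonality to holomorphic fields plus a "Hopf-lemma-type argument" is, as you acknowledge, an expectation rather than a proof, and it is exactly the crux.

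For comparison, the paper settles both points at once by a different mechanism. From $\|u^t\|_{\omega^t}<\infty$ and $\omega^t\geq (i\partial\dbar\rho)|_{D_t}/(-\rho)$ it deduces $\liminf_{r\to 0^-}\int_{\{\rho^t=r\}}|u^t|^2\,d\sigma=0$, hence by Stokes $\int_{D_t}\{f,b\}=0$ for every $\partial^t_\phi$-closed $(n-1,1)$-form $f$ smooth up to the boundary. This shows $*b\in{\rm Im}\,(\partial^t_\phi)^*$, so one solves the coercive (Dirichlet-type) problem $(\partial^t_\phi)^*v^t=*b$ with classical regularity up to the boundary, and the domain condition for $(\partial^t_\phi)^*$ forces $v^t=0$ on $\partial D_t$. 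Finally, since there are no $L^2$ holomorphic $(n-1,0)$-forms with respect to the complete metric $\omega^t$, the $L^2$ solution coincides with $-*v^t$; this identification is what yields simultaneously the boundary regularity and the boundary vanishing of the correction, and then $\tilde V_j=V_j-V$ restricts to $V_j^\rho$ on $\partial D_t$. To complete your write-up you would need an argument of this type (exploiting the weighted bound coming from the completeness of $\omega^t$, or some substitute) in place of \eqref{eq:flat-bdy}, Hamilton's theorem, and the Hopf-lemma heuristic.
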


If the above lemma is true then by \eqref{eq:flat-bdy}, we have
\begin{equation*}
    [ \tilde V_j,\tilde V_k]=[\tilde V_j,\overline{ \tilde V_k}]=0, \ \ \text{on}\ \partial D.
\end{equation*}
Since $\tilde V_j$ are holomorphic on fibres, we have
\begin{equation*}
    [\tilde V_j,\overline{ \tilde V_k}] =  \frac{\partial}{\partial  t^j} \overline{ \tilde V_k} - \frac{\partial}{\partial \bar t^k} \tilde V_j.
\end{equation*}
Thus
\begin{equation*}
    \frac{\partial}{\partial \bar t^k} \tilde V_j \equiv 0, \ \text{on}\ \partial D.
\end{equation*}
Since $\frac{\partial}{\partial \bar t^k} \tilde V_j$ are holomorphic on each fibre, we have
\begin{equation*}
    \frac{\partial}{\partial \bar t^k} \tilde V_j \equiv 0, \ \text{on}\  D.
\end{equation*}
Thus each $\tilde V_j$ is a holomorphic vector field on $D$. Moreover,
\begin{equation*}
    [\tilde V_j,\tilde V_k]\equiv 0,\ \text{on}\ \partial D.
\end{equation*}
Thus $D$ is trivial. The proof of Theorem \ref{th:flat} is complete.

\medskip

Now let us prove Lemma \ref{le:equality1}:

\medskip

\textbf{Proof of  Lemma \ref{le:equality1}}: We shall prove that $R\equiv 0$ implies that every $V_j|_{\partial D}(=V_j^\rho)$ has a holomorphic extension to $D$. Notice that the proof of Lemma \ref{le:R-expression} implies that
\begin{equation*}
    R=||\mathbb H (\sum(\dbar V_j)|_{D_t} ~ \lrcorner~ u_j)||^2_{\omega^t},
\end{equation*}
where $\omega^t=i\partial\dbar(-\log-\rho)|_{D_t}$. Thus $R\equiv 0$ implies that $\mathbb H b \equiv 0$. Since $\omega^t$ is $d$-bounded in the sense of Gromov (see \cite{Gromov91}, \cite{DF83} or \cite{Bern96}), and
\begin{equation*}
b:=\sum(\dbar V_j)|_{D_t} ~ \lrcorner~ u_j
\end{equation*}
is $\dbar$-closed, we know that there exists a smooth $L_t$-valued $(n-1,0)$-form $u^t$ such that $\dbar^t u^t=b$ and
\begin{equation*}
    ||u^t||_{\omega^t} \leq 2 ||b||_{\omega^t}<\infty.
\end{equation*}
We claim that $u: (\eta, t)\mapsto u^t(\eta)$ is smooth up the boundary of $D$ and $u=0$ on $\partial D$. In fact, if we can show
\begin{equation}\label{eq:equality1}
    \int_{D_t} \{f,b\}=0,
\end{equation}
for every $\partial^t_\phi$-closed $L_t$-valued $(n-1,1)$-form $f$ that is smooth up to the boundary of $D_t$, then $*b\in {\rm Im} (\partial^t_\phi)^*$, where $*$ is the Hodge-de Rham operator with respect to $(i\partial\dbar \rho)|_{D_t}$ and $(\partial^t_\phi)^*$ is the adjoint of $\partial^t_\phi$ with respect to $(i\partial\dbar \rho)|_{D_t}$. By the regularity property of the $\dbar$-Neumann problem (in fact, in our case, it is Dirichlet problem), one may solve
\begin{equation*}
    (\partial^t_\phi)^*v^t=*b.
\end{equation*}
where $v: (\eta,t) \mapsto v^t(\eta)$ is smooth up to the boundary of $D$. Since $(\partial_\phi^t)^*=-*\dbar^t*$, we have
\begin{equation*}
 \dbar^t(-*v^t)=b.
\end{equation*}
Since $v^t\in {\rm Dom} (\partial_\phi^t)^*$, we have $v^t=0$ on $\partial D_t$. Thus $||-*v^t||_{\omega^t}<\infty$. Since there are no $L^2$ (with respect to $\omega^t$) holomorphic $L_t$-valued $(n-1,0)$-forms on $D_t$, we have $u^t=-*v^t$. Thus our claim follows from \eqref{eq:equality1}.

Now let us prove \eqref{eq:equality1}. Put $\rho^t=\rho|_{D_t}$. Since $b$ is smooth up to the boundary, we have
\begin{equation}\label{eq:equality2}
    \int_{D_t} \{f, b\}=\lim_{r\to 0-}\int_{\{\rho^t<r\}}\{f, b\}= \lim_{r\to 0-} (-1)^n
    \int_{\{\rho^t=r\}}\{f,u^t\}.
\end{equation}
Since
\begin{equation*}
    \omega^t\geq \frac{(i\partial\dbar\rho)|_{D_t}}{-\rho},
\end{equation*}
we know that $||u^t||_{\omega^t}<\infty$ implies that
\begin{equation*}
    \int_{D_t}\frac{|u^t|_{i\partial\dbar\rho|_{D_t}}^2}{-\rho} \frac{(i\partial\dbar\rho)^n|_{D_t}}{n!} <\infty.
\end{equation*}
Thus
\begin{equation*}
    \liminf_{r\to 0-}\int_{\{\rho^t=r\}} |u^t|^2_{i\partial\dbar\rho|_{D_t}} d\sigma = 0.
\end{equation*}
Since $f$ is smooth up to the boundary, we know that 
\begin{equation}\label{eq:equality3}
    \lim_{r\to 0-} (-1)^n
    \int_{\{\rho^t=r\}}f\wedge \bar u^t=0.
\end{equation}
Thus \eqref{eq:equality1} follows from \eqref{eq:equality2} and \eqref{eq:equality3}, and our claim is proved.

By our assumption, $K_{\mathcal X/\mathbb B} +\mathcal L$ is trivial on $\pi^{-1}(t)$, thus there exists a holomorphic section, say $e$, of $K_{\mathcal X/\mathbb B} +\mathcal L$, that has no zero point in $\pi^{-1}(t)$. Now fix $t\in\mathbb B$, $1\leq j\leq m$. Put
\begin{equation*}
    u_j=e, \ \ u_k=0, \ \forall \ k\neq j.
\end{equation*}
By our claim, one may solve $\dbar^t u^t=(\dbar V_j)|_{D_t} ~\lrcorner~ e$ such that $u^t=0$ on the boundary. Since $e$ has no zero point in $\pi^{-1}(t)$, one may write
\begin{equation*}
    u^t=V ~\lrcorner~ e, \ \text{on}\ D_t.
\end{equation*}
Thus we have
\begin{equation*}
    \dbar^t(V_j-V)=0,  \ \text{on}\ D_t; \ V_j-V=V_j \ \text{on}\ \partial D_t.
\end{equation*}
Thus $V_j|_{\partial D_t}$ has a holomorphic extension, say $\tilde{V_j}|_{D_t}$, to $D_t$. The regularity property of $\tilde{V_j}$ follows from the regularity property of $u^t$.
 The proof is complete.

\subsection{Triviality of holomorphic motions}\ \

\medskip

We shall show how to use Theorem \ref{th:flat} to study triviality of holomorphic motions.

\medskip

\textbf{Basic notions on holomorphic motion}: Recall that  that if every fibre $D_t$ is a domain in $\C$ then:

\medskip

\emph{ $\partial D$ is Levi-flat if and only if $\theta_{j\bar k}(\rho)\equiv 0$.} 

\medskip

It is known that the boundary of the total space of a holomorphic motion of a planar domain is Levi-flat. Recall that, by definition, a homeomorphism 
\begin{equation}
F:(z,t)\mapsto(f(z,t),t),
\end{equation}
from $D_0\times\mathbb B$ to $D$ is called a \emph{holomorphic motion} (see \cite{MSS83}) of $D_0$ (with total space $D$) if $f(\cdot,0)$ is the identity mapping and $f(z,\cdot)$ is holomorphic for every fixed $z\in D_0$. 

\medskip

\textbf{Curvature formula for holomorphic motions}: Assume that $D_0$ is a smooth domain in $\C$ and $F$ is smooth up to the boundary. Assume further that $\mathcal L$ is trivial and $\phi\equiv 0$ on $D$. Put
\begin{equation*}
    V_j^F:=F_*(\partial/\partial t^j)=\partial/\partial t^j+f_j(z,t)\partial/\partial \zeta.
\end{equation*}
By definition, $V_j^F(\rho)\equiv0$ on $\partial D$. Thus
\begin{equation*}
    V_j^F=V_j=V_j^\rho, \ \text{on}\ \partial D.
\end{equation*}
Hence we have
\begin{equation*}
    ||\sum (V^F_j-V_j)~\lrcorner~ u_j||_{\omega^t}<\infty,
\end{equation*}
which implies that
\begin{equation*}
    \sum(\Theta_{j\bar k} u_j, u_k)=||\mathbb H (\sum ( \dbar V_j)|_{D_t}~\lrcorner~ u_j)||^2=||\mathbb H (\sum ( \dbar V_j^F)|_{D_t}~\lrcorner~ u_j)||^2.
\end{equation*}

\textbf{Criterion for $\Theta_{j\bar k}\equiv 0$ by using the Bergman kernel}: Put
\begin{equation*}
J=f_{\bar z}/f_z.
\end{equation*}
Since 
\begin{equation*}
\partial/\partial \bar\zeta=z_{\bar\zeta}\partial/\partial z+ \overline{z_{\zeta}} \partial/\partial \bar z,
\end{equation*}
and
\begin{equation*}
    \overline{z_{\zeta}}=\frac{f_z}{|f_z|^2-|f_{\bar z}|^2}, \ z_{\bar\zeta}=\frac{-f_{\bar z}}{|f_z|^2-|f_{\bar z}|^2},
\end{equation*}
we have
\begin{equation}\label{eq:abz}
    (\dbar V_j^F)|_{D_t}=(f_{jz}z_{\bar\zeta}+f_{j\bar z}\overline{z_{\zeta}})d\bar\zeta\otimes \frac{\partial}{\partial \zeta}=\frac{(f_z)^2J_j}{|f_z|^2(1-|J|^2)} d\bar\zeta\otimes \frac{\partial}{\partial \zeta}.
\end{equation}
Thus $\Theta_{j\bar k}\equiv 0$ is equivalent to
\begin{equation}\label{eq:trivialcondition}
    \int_{D_t} K^t(\zeta,\bar\eta) \left(\frac{(f_z)^2J_j}{|f_z|^2(1-|J|^2)}\right)(z(\zeta,t),t)\ id\zeta\wedge d\bar\zeta=0.
\end{equation}
for every $(\eta,t)$ in $D$ and every $j$.

\begin{proof}[Proof of Corollary \ref{co:last}] Since $J=a(t)$ now, by \eqref{eq:trivialcondition}, we know that $\Theta_{j\bar k}\equiv 0$ is equivalent to
\begin{equation*}
 \frac{a_j}{1-|a|^2} \int_{D_t} K^t(\zeta,\bar\eta) \ id\zeta\wedge d\bar\zeta=0.
\end{equation*}
for every $(\eta,t)$ in $D$ and every $j$. But notice that
\begin{equation*}
 \int_{D_t} K^t(\zeta,\bar\eta) \ id\zeta\wedge d\bar\zeta\equiv1.
\end{equation*}
Thus $\Theta_{j\bar k}\equiv 0$ is equivalent to $a_j\equiv 0$ for every $j$. Since $a(0)=0$, we know that $\Theta_{j\bar k}\equiv 0$ is equivalent to $a\equiv 0$.
\end{proof}

\textbf{Remark}: In \cite{Liurenshan}, Ren-Shan Liu showed that if $f=z+t^2\bar z$, then $F(\mathbb D\times \mathbb D)$ is not biholomorphic equivalent to the bidisc, where $\mathbb D$ denotes the unit disc. Interested  readers can find more information on the holomorphic motion in \cite{Sl91} and \cite{ST86}.

\section{Appendix}

\subsection{Variation of fibre integrals}\label{ss:VFI} \ \

\medskip 

Let $\B$ be the unit ball in $\R^m$. Let $\{D_t\}_{t\in\B}$ be a family of smoothly bounded domains in $\R^n$. Put 
\begin{equation*}
    D:=\{(t,x)\in\R^{m+n}: x\in D_t, \ t\in\B\}.
\end{equation*}
Assume that there is a real valued function $\rho$ on $\mathbb B\times \mathbb R^n$ such that for each $t$ in $\B$, $\rho|_{D_t}$ is a smooth defining function of $D_t$. 

We call $\{D_t\}_{t\in\B}$ a smooth family if there exists a fibre preserving diffeomorphism $\Phi$ from $\mathbb B\times D_0$ onto $D$ such that for each $1\leq j\leq m$, $\Phi_*(\partial/\partial t^j)$ extends to a smooth vector field on $\mathbb R^n$. Put
\begin{equation}\label{eq:boundary}
    [D]:=\overline D\cap(\B\times\R^n), \ \delta D:=\partial D\cap(\B\times\R^n).
\end{equation}
Let $dx:=dx^1\wedge\cdots\wedge dx^n$ be the Euclidean volume form on $\R^n$. Fix a smooth function $f$ on a neighborhood of $[D]$. If $\{D_t\}_{t\in\B}$ is a smooth family then the fibre integrals
\begin{equation*}
    F(t):=\int_{D_t}f(t,x)dx
\end{equation*}
depend smoothly on $t\in \B$. We shall introduce a natural way to compute the derivatives of $F(t)$ (see \cite{Sch12} for related results). For every fixed
$j\in\{1,\cdots m\}$, let
\begin{equation*}
    V_j:=\frac{\partial}{\partial t^j}-\sum v^{\lambda}_j\frac{\partial}{\partial x^\lambda}
\end{equation*}
be a smooth vector field on a neighborhood of $[D]$. We shall prove that:

\begin{theorem}\label{th:vfi} Let $\{D_t\}_{t\in\B}$ be a smooth family of smoothly bounded domain in $\R^m$. Assume that $V_j(\rho)=0$ on $\delta D$. Then we have
\begin{equation}\label{eq:vfi}
 \frac{\partial F}{\partial t^j}(t)=\int_{D_t}L^t_{V_j}\left(f(t,x)dx\right)=\int_{D_t}L_{V_j}\left(f(t,x)dx\right),
\end{equation}
for every $t$ in $\B$, where $L^t_{V_j}:=i_t^*(L_{V_j})$.
\end{theorem}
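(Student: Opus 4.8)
The plan is to reduce the statement to the case of a product family by pulling everything back along the fibre-preserving diffeomorphism $\Phi\colon \mathbb B\times D_0\to D$, where it becomes a differentiation-under-the-integral-sign computation, and then to show that the resulting formula is independent of the choice of lift $V_j$ as long as $V_j(\rho)=0$ on $\delta D$. First I would note that, since $\{D_t\}$ is a smooth family, I may work on $\mathbb B\times D_0$ and write $\tilde f:=\Phi^*(f\,dx)$; then $F(t)=\int_{D_0}i_t^*\tilde f$, and differentiating under the integral gives $\partial F/\partial t^j=\int_{D_0}i_t^*(L_{\partial/\partial t^j}\tilde f)$, because on the product the coordinate field $\partial/\partial t^j$ is genuinely tangent to the fibres and $i_t^*$ commutes with $\partial/\partial t^j$ in the obvious way. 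Pushing forward by $\Phi$ and using that $\Phi_*(\partial/\partial t^j)$ is a (smooth, globally defined) lift $W_j$ of $\partial/\partial t^j$ tangent to $\partial D$, this reads $\partial F/\partial t^j(t)=\int_{D_t}i_t^*(L_{W_j}(f\,dx))$. So the formula holds for \emph{one} particular lift, namely $W_j$.

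The second step is to compare $V_j$ with $W_j$. Both are lifts of $\partial/\partial t^j$, so $E_j:=V_j-W_j$ is a vertical vector field (i.e. $\pi_*E_j=0$), hence $i_t^*(L_{E_j}\alpha)$ makes sense intrinsically on $D_t$ for any form $\alpha$ on a neighbourhood of $[D]$. By Cartan's formula, $L_{E_j}(f\,dx)=d\,\delta_{E_j}(f\,dx)+\delta_{E_j}\,d(f\,dx)$. Since $f\,dx$ is an $(n)$-form on the $(m+n)$-dimensional total space and $E_j$ is vertical, $i_t^*(\delta_{E_j}\,d(f\,dx))=\delta^t_{E_j}\,i_t^*(d(f\,dx))$, and $i_t^*(d(f\,dx))$ is an $(n+1)$-form on the $n$-dimensional $D_t$, hence zero; so $i_t^*(L_{E_j}(f\,dx))=d^t(\delta^t_{E_j}(f\,dx))$, an exact form on $D_t$. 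By Stokes' theorem, $\int_{D_t}i_t^*(L_{E_j}(f\,dx))=\int_{\partial D_t}i_t^*(\delta_{E_j}(f\,dx))$. It therefore remains to show this boundary integral vanishes.

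The key point — and the only place where the hypothesis $V_j(\rho)=0$ on $\delta D$ is used — is that $E_j=V_j-W_j$ is not merely vertical but, along $\partial D$, it is \emph{tangent to $\partial D$}: indeed $V_j(\rho)=0$ on $\delta D$ by assumption, and $W_j(\rho)=0$ on $\delta D$ because $W_j=\Phi_*(\partial/\partial t^j)$ is tangent to $\partial D$, so $E_j(\rho)=0$ on $\delta D$, i.e. $E_j|_{\partial D_t}\in T(\partial D_t)$ for each $t$. For a vector field tangent to the boundary hypersurface $\partial D_t$, the contraction $\delta_{E_j}(f\,dx)$ restricted to $\partial D_t$ can be rewritten using that $\partial D_t=\{\rho^t=0\}$; concretely, one checks that $i_{\partial D_t}^*(\delta_{E_j}\beta)$ for a top-degree form $\beta$ on $D_t$ equals $(E_j(\rho^t)/|d\rho^t|)$ times the surface measure, up to the usual normalisation, and $E_j(\rho^t)=0$ on $\partial D_t$. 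Hence the boundary integral vanishes and $\int_{D_t}i_t^*(L_{V_j}(f\,dx))=\int_{D_t}i_t^*(L_{W_j}(f\,dx))=\partial F/\partial t^j(t)$, which is \eqref{eq:vfi}; the identity $\int_{D_t}L^t_{V_j}=\int_{D_t}L_{V_j}$ in \eqref{eq:vfi} follows from the same Cartan/Stokes manipulation applied to $V_j$ itself rather than to $E_j$ (the vertical-contraction term integrates to a boundary term controlled by $V_j(\rho)=0$, and the remaining term only sees $i_t^*$).

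I expect the main obstacle to be the bookkeeping in the last step: making precise, in the presence of a general (possibly non-product) defining function $\rho$ and a general lift $V_j$, the claim that a boundary contraction $i_{\partial D_t}^*(\delta_{E_j}\beta)$ of a tangential vector field against a top-degree form vanishes, and doing so in a way that is manifestly independent of coordinates. Everything else is a routine application of Cartan's magic formula, Stokes' theorem, and differentiation under the integral sign on the product model.
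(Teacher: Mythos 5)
Your argument is correct in substance, but it follows a genuinely different route from the paper. The paper's proof works directly with $V_j$ itself: since $V_j(\rho)=0$ on $\delta D$ and $V_j$ is smooth up to the boundary, the motion it generates is compatible with the family, $\Phi(a\times D_0)=D_{a\nu}$, so one can change variables along this motion and identify the limit of the difference quotients with the Lie derivative $L_{V_j}$ — no Stokes theorem, no comparison of lifts, and the interchange of limit and integral is immediate because the motion (hence the pulled-back integrand) is smooth up to the boundary of the fixed compact $\overline{D_0}$. You instead first prove the formula for the special lift $W_j=\Phi_*(\partial/\partial t^j)$ supplied by the trivialization and then prove lift-independence: $E_j=V_j-W_j$ is vertical and, by the hypothesis plus the tangency of $W_j$ to $\partial D$, tangent to each $\partial D_t$, so Cartan's formula and Stokes reduce the difference to a boundary integral of $\delta_{E_j}$ contracted into a top-degree fibre form, which vanishes by tangency (your coordinate check of this is right, and it is the clean way to see where $V_j(\rho)=0$ enters). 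What your decomposition buys is a transparent statement that the right-hand side of \eqref{eq:vfi} is the same for \emph{any} boundary-tangent lift; what the paper's route buys is brevity and the avoidance of the one soft spot in your step 1: the definition of a smooth family only requires $\Phi_*(\partial/\partial t^j)$ to extend smoothly, not $\Phi$ itself to be smooth up to $\partial(\mathbb B\times D_0)$, so $\tilde f=\Phi^*(f\,dx)$ need not be smooth up to the boundary and "differentiate under the integral over $D_0$" is not automatic. It can be repaired — either by dominating the $t$-derivative of the integrand, which equals $\phi_t^*\bigl(i_t^*(L_{W_j}(f\,dx))\bigr)$, by a constant times the (integrable) Jacobian $\det D\phi_{t_0}$, or more simply by replacing $\Phi$ with the flow of the smoothly extended $W_j$ between nearby fibres, which is exactly the paper's mechanism applied to $W_j$; similarly your assertion that $W_j$ is tangent to $\partial D$ deserves the one-line flow argument (the flow of the extension carries $D_t$ onto $D_{t+se_j}$, hence $\overline{D_t}$ onto $\overline{D_{t+se_j}}$ and $\partial D_t$ onto $\partial D_{t+se_j}$, so $\rho$ is constant along its integral curves through $\delta D$). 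With those two points made explicit, your proof is complete.
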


\begin{proof} Without loss of generality, one may assume that $t=0$ and $j=1$. Since $V_1(\rho)$ vanishes on $\delta D$, the motion
\begin{equation*}
\Phi:(-1,1)\times D_0\rightarrow\R^{m}
\end{equation*}
of $D_0$ associated to $V_1$ is compatible with $\{D_t\}$, i.e.
\begin{equation*}
\Phi(a\times D_0)=D_{a\nu}, \ \nu=(1,0,\cdots,0)\in\R^m,
\end{equation*}
for every $a\in(-1,1)$. Since for every fixed $a\in(-1,1)$,
\begin{equation*}
\Phi^a:x\mapsto \Phi(a,x)
\end{equation*}
is a $C^{\infty}$ isomorphism from $D_0$ to $D_{a\nu}$, we have
\begin{equation}\label{eq:derivative}
     \frac{\partial F}{\partial t^1}(0)=\lim_{0\neq a\to 0}\int_{D_0}\frac{f(a\nu,\Phi^a(x))d\Phi^a(x)-f(0,x)dx}{a}
\end{equation}
Since $V_1$ and $f$ are smooth up to the boundary, we have
\begin{equation}\label{eq:derivative1}
     \frac{\partial F}{\partial t^1}(0)=\int_{D_0}\lim_{0\neq a\to 0}\frac{f(a\nu,\Phi^a(x))d\Phi^a(x)-f(0,x)dx}{a}.
\end{equation}
By definition of Lie derivative,
\begin{equation}\label{eq:derivative2}
L_{V_1}\left(f(t,x)dx\right)(0,x)=\lim_{0\neq a\to 0}\frac{[(\Psi^a)^*(fdx)](0,x)-f(0,x)dx}{a},
\end{equation}
where
\begin{equation*}
    \Psi^a:(b\nu,\Phi^b(x))\mapsto(b\nu+a\nu,\Phi^{b+a}(x)), \ (b,x)\in (-1+|a|,1-|a|)\times D_0.
\end{equation*}
Since
\begin{equation*}
    i_0^*\left\{[(\Psi^a)^*(fdx)](0,x)-f(a\nu,\Phi^a(x))d\Phi^a(x)\right\}=0,
\end{equation*}
\eqref{eq:vfi} follows from \eqref{eq:derivative1} and \eqref{eq:derivative2}.
\end{proof}

Now assume that $m=2$, put 
\begin{equation*}
\frac{\partial}{\partial t}:=\frac12\left(\frac{\partial}{\partial t^1}-i\frac{\partial}{\partial t^2}\right), \
\frac{\partial}{\partial \bar t}:=\frac12\left(\frac{\partial}{\partial t^1}+i\frac{\partial}{\partial t^2}\right).
\end{equation*}
Let
\begin{equation*}
V=\frac{\partial}{\partial t}-\sum v^\lambda\frac{\partial}{\partial x^\lambda}
\end{equation*}
be a smooth vector field on a neighborhood of $[D]$. If $V(\rho)$ vanishes on $\delta D$, then both $2{\rm Re}V$ and $-2{\rm Im}V$ satisfy the assumption of Theorem~\ref{th:vfi}. Thus we have:

\begin{corollary}\label{co:vfi} If $V(\rho)$ vanishes on $\delta D$ then
\begin{equation*}
 \frac{\partial F}{\partial t}(t)=\int_{D_t} L^t_{V}\left(f(t,x)dx\right), \
 \frac{\partial F}{\partial \bar t}(t)=\int_{D_t} L^t_{\overline V}\left(f(t,x)dx\right),
\end{equation*}
for every $t\in \mathbb B$.
\end{corollary}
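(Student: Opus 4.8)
The plan is to reduce everything to Theorem~\ref{th:vfi} by splitting $V$ into its real and imaginary parts. First I would set
\[
V_1:=V+\overline V,\qquad V_2:=i\,(V-\overline V),
\]
so that $V_1=2\,\mathrm{Re}\,V$ and $V_2=-2\,\mathrm{Im}\,V$. Since $\overline V=\partial/\partial\bar t-\sum\overline{v^\lambda}\,\partial/\partial x^\lambda$, a short Wirtinger computation using $\partial/\partial t+\partial/\partial\bar t=\partial/\partial t^1$ and $\partial/\partial t-\partial/\partial\bar t=-i\,\partial/\partial t^2$ gives
\[
V_1=\frac{\partial}{\partial t^1}-\sum 2\,\mathrm{Re}(v^\lambda)\frac{\partial}{\partial x^\lambda},\qquad
V_2=\frac{\partial}{\partial t^2}-\sum\big(-2\,\mathrm{Im}(v^\lambda)\big)\frac{\partial}{\partial x^\lambda}.
\]
Thus $V_1$ and $V_2$ are smooth \emph{real} vector fields on a neighbourhood of $[D]$ of exactly the form appearing in Theorem~\ref{th:vfi}, for $j=1$ and $j=2$ respectively.

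Next I would check that the hypothesis $V_j(\rho)=0$ on $\delta D$ is inherited. Because $\rho$ is real valued one has $\overline V(\rho)=\overline{V(\rho)}$, so the standing assumption $V(\rho)=0$ on $\delta D$ forces $\overline V(\rho)=0$ on $\delta D$ as well, whence $V_1(\rho)=(V+\overline V)(\rho)=0$ and $V_2(\rho)=i\,(V-\overline V)(\rho)=0$ on $\delta D$. Theorem~\ref{th:vfi} then applies to each of $V_1,V_2$ and yields, for every $t\in\mathbb B$,
\[
\frac{\partial F}{\partial t^1}(t)=\int_{D_t}L^t_{V_1}\big(f(t,x)\,dx\big),\qquad
\frac{\partial F}{\partial t^2}(t)=\int_{D_t}L^t_{V_2}\big(f(t,x)\,dx\big).
\]

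Finally I would recombine these using the $\C$-linearity of the map $v\mapsto L^t_v$ (the complex Lie derivative being, by definition, the $\C$-linear extension of the real one — which is precisely what makes the symbols $L^t_V$ and $L^t_{\overline V}$ in the statement meaningful). From $\partial/\partial t=\tfrac12(\partial/\partial t^1-i\,\partial/\partial t^2)$ and the identity $\tfrac12(V_1-iV_2)=\tfrac12\big((V+\overline V)+(V-\overline V)\big)=V$ one gets $\partial F/\partial t=\int_{D_t}L^t_{V}(f\,dx)$; symmetrically, $\partial/\partial\bar t=\tfrac12(\partial/\partial t^1+i\,\partial/\partial t^2)$ together with $\tfrac12(V_1+iV_2)=\overline V$ gives $\partial F/\partial\bar t=\int_{D_t}L^t_{\overline V}(f\,dx)$, which is the claim. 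There is no genuine analytic difficulty here beyond Theorem~\ref{th:vfi} itself; the only points deserving care — what I would regard as the "obstacle" — are verifying that $V_1$ and $V_2$ really are real vector fields (so that the flow/motion argument underlying Theorem~\ref{th:vfi} is legitimate when invoked for them) and keeping the Wirtinger signs consistent in the two recombinations.
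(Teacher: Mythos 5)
Your proof is correct and follows exactly the paper's route: the paper also applies Theorem~\ref{th:vfi} to $2\,\mathrm{Re}\,V$ and $-2\,\mathrm{Im}\,V$ (your $V_1$ and $V_2$) and recombines by $\C$-linearity of the Lie derivative. You have simply written out the Wirtinger bookkeeping and the hypothesis check that the paper leaves implicit.
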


\subsection{Stability of the Bergman kernel}\label{ss:SBK}  \ \

\medskip

We shall give a short account of Hamilton's theory on regularity properties of families of non-coercive boundary value problems. By Lemma 2.1 in \cite{Bern06}, stability of Bergman kernels follows directly from stability of solutions $u^t$ of a family of $\dbar$-Neumann problems $\square^t(\cdot)=f^t$. But it is not easy to prove regularity of $u^t$ by the standard method. In fact, if we want to use
\begin{equation}\label{eq:stability}
    ||\square^t(u^t-u^s)||=||f^t-f^s-(\square^t-\square^s)u^s||,
\end{equation}
to estimate $||u^t-u^s||$ then we have to find a natural connection between the domain of $\square^t$ and the domain of $\square^s$ (i.e., $u^s$ may not be in the domain of $\square^t$).

Hamilton \cite{Hamilton79} found a more natural way to study the regularity properties of families of non-coercive boundary value problems (not only for the $\dbar$-Neumann problem). For reader's convenience we give a sketch description of Hamilton's idea.

Instead of considering $\square^t$ (whose domain satisfies the so called $\dbar$-Neumann condition), Hamilton considered the full Laplace operator $\widetilde{\square^t}$ (whose domain contains all forms smooth up to the boundary). Let $u^t$ be a form smooth up to the boundary. In  general, the Sobolev norm of $\widetilde{\square^t}(u^t)$ could not control the Sobolev norm of $u^t$. In fact, $u^t$ has to be in the domain of $\square^t$ (see \cite{Folland-Kohn72}). Thus two more operators (sending forms on $\overline{D_t}$ to forms on the boundary of $D_t$) are used in Hamilton's paper, i.e., he considered the full $\dbar$-Neumann problem
\begin{equation}\label{eq:full-Neumann}
    \mathfrak{S}^t(\cdot):=\left(\widetilde{\square^t},(\dbar^t\rho)\vee,(\dbar^t\rho)\vee\dbar^t\right)(\cdot)=f^t,
\end{equation}
where  $$(\dbar^t\rho)\vee:=(\dbar^t\rho\wedge\cdot)^*.$$ Now the domain of $\mathfrak{S}^t$ is $C^{\infty}_{\bullet,\bullet}(\overline{D_t})$ for each $t$. Choose a $C^{\infty}$ trivialization mapping 
$$\mathbb B\times D_0\simeq D,$$
then the domain of $\mathfrak{S}^t$ can be seen as a fixed space $C^{\infty}_{\bullet,\bullet}(\overline{D_0})$. Moreover, by \cite{Hamilton79}, the constant in the basic estimates for $\mathfrak{S}^t$ can be chosen to be independent of $t\in \mathbb B$. Thus \eqref{eq:stability} applies. The interested reader is referred to that paper for further information and a clear proof.

\subsection{$L^2$-estimate for $\dbar a=\partial_{\phi} b+c$}\label{ss:dbar} \ \

\medskip

We shall prove a generalization of Demailly's theorem (see \cite{Demailly82}, \cite{Hormander65} or \cite{Bern10}) in this section.

\begin{theorem}\label{th:L2} Let $(L, h)$ be a Hermitian line bundle over an $n$-dimensional complete K\"ahler manifold $(X,\omega)$. Let $v$ be a smooth $\dbar$-closed $L$-valued $(n,1)$-form. Assume that
\begin{equation*}
    i\Theta(L,h) >0 \ \text{on} \ X,\ (resp. \ i\Theta(L,h) \equiv 0 \ \text{on} \ X )
\end{equation*}
and
\begin{equation*}
    I(v):= \inf_{v=\partial_\phi b+c} ||b||^2_{\omega} + ||c||^2_{i\Theta(L,h)} <\infty, \ (resp. \ I(v):= \inf_{v=\partial_\phi b} ||b||^2_{\omega}<\infty ) .
\end{equation*}
Then there exists a smooth $L$-valued $(n,0)$-form $a$ on $X$ such that $\dbar a=v$ and
\begin{equation}\label{eq:L2q}
    ||a||_{\omega}^2 \leq I(v).
\end{equation}
\end{theorem}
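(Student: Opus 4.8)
\medskip

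The plan is to adapt Demailly's duality proof of the H\"ormander $L^2$-estimate (cf. \cite{Demailly82}, \cite{Hormander65}, \cite{Bern10}) to the source term $\partial_\phi b+c$. By the standard functional-analytic criterion, the existence of a solution $a$ of $\dbar a=v$ with $\|a\|_\omega^2\le I(v)$ reduces to the a priori inequality
\begin{equation*}
|(v,\beta)|^2\le I(v)\,\|\dbarstar\beta\|_\omega^2 ,\qquad \beta\in\mathrm{Dom}(\dbarstar);
\end{equation*}
moreover, since $v$ is $\dbar$-closed it suffices to verify this for $\dbar$-closed $\beta$, because $(v,\cdot)$ ignores the component of $\beta$ in $(\ker\dbar)^{\perp}$, on which $\dbarstar$ vanishes. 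As $(X,\omega)$ is complete K\"ahler, forms that are smooth with compact support are dense in $\mathrm{Dom}(\dbar)\cap\mathrm{Dom}(\dbarstar)$ for the graph norm, so I may take $\beta$ smooth with compact support.

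For such $\beta$ with $\dbar\beta=0$, the Bochner--Kodaira--Nakano identity on $(n,1)$-forms (no torsion term, since $\omega$ is K\"ahler and the holomorphic degree is maximal) gives
\begin{equation*}
\|\dbarstar\beta\|_\omega^2\ \ge\ \int_X\big\langle [\,i\Theta(\mathcal L,h),\Lambda\,]\beta,\beta\big\rangle_\omega\, dV_\omega .
\end{equation*}
Now write $(v,\beta)=(c,\beta)+(\partial_\phi b,\beta)$. For the first term, pointwise Cauchy--Schwarz in the eigenbasis of $i\Theta(\mathcal L,h)$ gives $|\langle c,\beta\rangle_\omega|^2\le |c|^2_{i\Theta(\mathcal L,h)}\,\langle[\,i\Theta(\mathcal L,h),\Lambda\,]\beta,\beta\rangle_\omega$ pointwise, hence, after integrating and using the displayed inequality, $|(c,\beta)|^2\le \|c\|^2_{i\Theta(\mathcal L,h)}\,\|\dbarstar\beta\|_\omega^2$. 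For the second term one integrates by parts, $(\partial_\phi b,\beta)=(b,\partial_\phi^*\beta)$, and invokes the K\"ahler commutation relations together with $\dbar\beta=0$ and $\partial_\phi\beta=0$ (again maximal holomorphic degree): these express both $\partial_\phi^*\beta$ and $\dbarstar\beta$ through $\Lambda\beta$, and combined with the primitivity of $b$ (cf. Lemma \ref{le:primitive}; in the abstract statement one reduces to this case) they yield $|(\partial_\phi b,\beta)|\le\|b\|_\omega\,\|\dbarstar\beta\|_\omega$. Adding the two bounds by Cauchy--Schwarz in $(\|b\|_\omega,\|c\|_{i\Theta(\mathcal L,h)})$ and taking the infimum over all decompositions $v=\partial_\phi b+c$ produces the a priori inequality, hence the solution $a$; in the flat case $i\Theta(\mathcal L,h)\equiv 0$ the term $c$ drops out and one relies on the $\partial_\phi b$ bound alone, using that $\square''=\square'$ on the complete K\"ahler manifold. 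Finally $a$ is smooth by interior elliptic regularity, since $v$ is smooth.

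The main obstacle is the $\partial_\phi b$ term: one cannot simply integrate by parts and estimate, since $\|\partial_\phi^*\beta\|_\omega$ is \emph{not} controlled by $\|\dbarstar\beta\|_\omega$ in general. The point is that $\beta$ has top holomorphic degree and $b$ is primitive, so the Lefschetz identities single out exactly the pairing that does reduce to $\|b\|_\omega\|\dbarstar\beta\|_\omega$; making this precise, together with the reduction to primitive $b$ in the general statement and the domain and closability bookkeeping on the non-compact complete manifold, is where the work lies. Everything else is a routine transcription of Demailly's argument.
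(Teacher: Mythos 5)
Your skeleton (duality plus the density of compactly supported forms on a complete K\"ahler manifold, Bochner--Kodaira--Nakano, and the pointwise Cauchy--Schwarz against the curvature term) is the same as the paper's, but two steps do not hold together as written, and one of them is exactly the step you flag as ``where the work lies''. First, the way you combine the two estimates loses the constant: from $|(c,\beta)|\le\|c\|_{i\Theta(L,h)}\|\dbarstar\beta\|_\omega$ and $|(\partial_\phi b,\beta)|\le\|b\|_\omega\|\dbarstar\beta\|_\omega$, each of which consumes the full right-hand side, you can only conclude $|(v,\beta)|\le(\|b\|_\omega+\|c\|_{i\Theta(L,h)})\|\dbarstar\beta\|_\omega$, i.e. $\|a\|_\omega^2\le(\|b\|_\omega+\|c\|_{i\Theta(L,h)})^2$, which can be as large as $2I(v)$; your ``Cauchy--Schwarz in $(\|b\|,\|c\|)$'' with both factors equal to $\|\dbarstar\beta\|$ gives the same factor $2$. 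The sharp constant $\|b\|^2+\|c\|^2$ is the whole content of Theorem \ref{th:L2} and is what makes $R\ge0$ in Theorem \ref{th:CF}, so this is not cosmetic. The paper avoids it by spending the Bochner--Kodaira budget only once: for a compactly supported smooth $(n,1)$-form $g$ one has $\|\dbar g\|_\omega^2+\|\dbarstar g\|_\omega^2=\|\partial_\phi^*g\|_\omega^2+([i\Theta(L,h),\Lambda_\omega]g,g)_\omega$ (the term $\|\partial_\phi g\|^2$ vanishes in top holomorphic degree), writes $(v,g)=(b,\partial_\phi^*g)+(c,g)$, estimates the first summand against $\|\partial_\phi^*g\|_\omega$ and the second pointwise against $([i\Theta(L,h),\Lambda_\omega]g,g)^{1/2}$, and applies a single two-component Cauchy--Schwarz; no $\dbar$-closedness of $g$ and no separate comparison of $\|\partial_\phi^*g\|$ with $\|\dbarstar g\|$ is needed.

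Second, your treatment of the $\partial_\phi b$ term is both misdiagnosed and incomplete. Under the standing curvature hypotheses the term $([i\Theta(L,h),\Lambda_\omega]\beta,\beta)$ is nonnegative, so for compactly supported $\dbar$-closed $\beta$ the plain route ``integrate by parts, then BKN'' already gives $\|\partial_\phi^*\beta\|_\omega\le\|\dbarstar\beta\|_\omega$; the ``main obstacle'' you describe does not exist here, and no Lefschetz identity or primitivity of $b$ enters. Conversely, the route you propose is not carried out: the commutation-relation computation is only sketched, and the ``reduction to primitive $b$'' is unjustified in the abstract statement --- $b$ is an arbitrary $(n-1,1)$-form in a decomposition $v=\partial_\phi b+c$, and Lefschetz-decomposing $b=b_0+\omega\wedge s$ produces an extra term $\omega\wedge\partial_\phi s$ that would have to be absorbed into $c$ with no control of its $\|\cdot\|_{i\Theta(L,h)}$-norm (and in the flat case no $c$ is allowed at all). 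So as written the key estimate is not proved. A smaller point of order: you restrict to $\dbar$-closed $\beta$ \emph{before} invoking density of compactly supported forms, but the approximating sequence furnished by completeness is not $\dbar$-closed; do as the paper does --- prove the inequality with $\|\dbar g\|^2_\omega+\|\dbarstar g\|^2_\omega$ on the right for all compactly supported smooth $g$, extend by density to ${\rm Dom}(\dbar)\cap{\rm Dom}(\dbarstar)$, and only then restrict to $\ker\dbar$ in the functional-analytic step.
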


\begin{proof} We shall only prove the $i\Theta(L,h) >0$ case, since the $i\Theta(L,h) \equiv 0$ case can be proved by a similar argument. By H\"ormander's theorem and the standard density lemma for complete K\"ahler manifold, it suffices to prove that,
\begin{equation}\label{eq:hormander}
    |(\partial_\phi b+c,g)_\omega|^2\leq (||b||^2_\omega + ||c||^2_{i\Theta(L,h)})(||\dbar^*g||_\omega^2+||\dbar g||^2_\omega),
\end{equation}
for every smooth $L$-valued $(n,1)$-form $g$ with  compact support in $X$. Notice that
\begin{equation*}
    (\partial_\phi b+c,g)_\omega=(b,\partial_\phi^*g)_\omega+(c,g)_\omega.
\end{equation*}
Hence
\begin{equation*}
     |(\partial_\phi b+c,g)_\omega|^2\leq (||b||^2_\omega + ||c||^2_{i\Theta(L,h)})(||\partial_\phi^*g||^2_{\omega}+([i\Theta(L,h),\Lambda_\omega]g,g)_{\omega}),
\end{equation*}
where $\Lambda_\omega$ denotes the adjoint of $\omega\wedge$. Thus \eqref{eq:hormander} follows from the Bochner-Kodaira-Nakano formula. The proof is complete. 
\end{proof}


\begin{thebibliography}{99}

\bibitem{Bern96} B. Berndtsson, {\it The extension theorem of Ohsawa-Takegoshi and the theorem of Donnelly-Fefferman}, Ann. Inst. Fourier, {\bf 46} (1996), 1083--1094.

\bibitem{Bern98} B. Berndtsson, {\it Prekopa's theorem and Kiselman’s minimum principle for plurisubharmonic functions}, Math. Ann. {\bf 312} (1998), 785--792.

\bibitem{Bern06} B. Berndtsson, {\it Subharmonicity properties of the Bergman kernel and some other functions associated to pseudoconvex domains}, Ann. Inst. Fourier (Grenoble), {\bf 56} (2006), 1633--1662.

\bibitem{Bern09} B. Berndtsson, {\it Curvature of vector bundles associated to holomorphic fibrations}, Ann. Math. {\bf 169} (2009), 531--560.

\bibitem{Bern09a} B. Berndtsson, {\it Positivity of direct image bundles and convexity on the space of K\"ahler metrics}, J. Diff. Geom. {\bf 81} (2009), 457--482.

\bibitem{Bern10} B. Berndtsson, {\it An introduction to things $\dbar$}, Analytic and algebraic geometry, IAS/Park City Math. Ser., vol. 17, Amer. Math. Soc., Providence, RI, 2010, 7--76.

\bibitem{Bern11} B. Berndtsson, {\it Strict and nonstrict positivity of direct image bundles}, Math. Z. {\bf 269} (2011), 1201--1218.

\bibitem{Bern13a} B. Berndtsson, {\it Convexity on the space of K\"ahler metrics}. Ann. Fac. Sci. Toulouse Math. {\bf 22} (2013), 713--746. 

\bibitem{Bern13} B. Berndtsson, {\it The openness conjecture for plurisubharmonic functions}, arxiv: 1305.5781.

\bibitem{Bern13b} B. Berndtsson, {\it The Openness Conjecture and Complex Brunn-Minkowski Inequalities}, Complex Geometry and Dynamics, Volume 10 of the series Abel Symposia, 2013, 29--44.

\bibitem{Bern14} B. Berndtsson, {\it A Brunn-Minkowski type inequality for Fano manifolds and some uniqueness theorems in K\"ahler geometry}, Invent. Math. {\bf 200} (2014), 149--200.

\bibitem{Bern15} B. Berndtsson, {\it A comparison principle for Bergman kernels}, arXiv:1501.02440 [math.CV].

\bibitem{BB14} B. Berndtsson and R. J. Berman, {\it Convexity of the K-energy on the space of K\"ahler metrics and uniqueness of extremal metrics}. arXiv:1405.0401 [math.CV].

\bibitem{BL14} B. Berndtsson and L. Lempert, {\it A proof of the Ohsawa-Takegoshi theorem with sharp estimates}, arXiv:1407.4946 [math.CV].

\bibitem{BernPaun08} B. Berndtsson and M. Paun, {\it Bergman kernel and the pseudoeffectivity of relative canonical bundles}, Duke Math. J. {\bf 145} (2008), 341--378.

\bibitem{BL76} H. J. Brascamp and E. H. Lieb, {\it On extensions of the Brunn-Minkowski and Prekopa-Leindler theorems, including inequalities for log concave functions, and with an application to the diffusion equation}, Journal of Functional Analysis, {\bf 22}  (1976), 366--389.

\bibitem{Choi15} Y. J. Choi, {\it  A study of variations of pseudoconvex domains via K\"ahler-Einstein metrics}. Math. Z.  {\bf 281} (2015), 299--314.

\bibitem{Demailly82} J.-P. Demailly, {\it Estimations $L^2$ pour l'op\'{e}rateur $\bar{\partial}$ d'un
    fibr\'{e} vectoriel holomorphe semi-positif au-dessus d'une vari\'{e}t\'{e} k\"{a}hl\'{e}rienne
    compl\`{e}te}, Ann. Sci. \'{E}cole Norm. Sup. {\bf 15} (1982), 457--511.

\bibitem{Demailly12} J.-P. Demailly, {\it Complex analytic and differential geometry}. Book available from the author's homepage.

\bibitem{Donaldson99} S. K. Donaldson, {\it Symmetric spaces, K\"ahler geometry and Hamiltonian dynamics}, In Northern California Symplectic Geometry Seminar, volume 196 of Amer. Math. Soc. Transl. Ser. 2, pages 13--33. Amer. Math. Soc., Providence, RI, 1999.

\bibitem{DF83} H. Donnelly and C. Fefferman, {\it $L^2-$cohomology and index theorem for the
    Bergman metric}, Ann. of Math. {\bf 118} (1983), 593--618.

\bibitem{Folland-Kohn72} G. B. Folland and J. J. Kohn, The Neumann problem for the Cauchy-Riemann complex, Ann. Math. Sudies, No. 75, Princetion University Press, Princeton, 1972.

\bibitem{GS15} T. Geiger, G. Schumacher, {\it Curvature of higher direct image sheaves}, arXiv:1501.07070v1 [math.AG]

\bibitem{GreeneK82} R. E. Greene and S. G. Krantz, {\it Deformation of complex structures, estimates for the
    $\op$-equation, and stability of the Bergman kernel}, Adv. Math. {\bf 43} (1982), 1--86.
    
\bibitem{Griffiths84} P. Griffiths, Topics in transcendental algebraic geometry, No. 106. Princeton University Press, 1984.

\bibitem{Gromov91} M. Gromov, {\it K\"ahler hyperbolicity and $L^2$-Hodge theory}, J. Diff. Geom. {\bf 33}
    (1991), 263--292.

\bibitem{Hamilton77} R. S. Hamilton, {\it Deformation of complex structures on manifolds with boundary. I. The stable case}, J. Diff. Geom. {\bf 12} (1977), 1--45.

\bibitem{Hamilton79} R. S. Hamilton, {\it Deformation of complex structures on manifolds with boundary. II. Families of noncoercive boundary value problems}, J. Diff. Geom. {\bf 14} (1979), 409--473.

\bibitem{Hormander-65} L. H\"{o}rmander, {\it $L^2$-estimates and existence theorems for the $\op$-operator}, Acta Math. {\bf 113} (1965), 89--152.

\bibitem{Hormander65} L. H\"{o}rmander, {\it An introduction to complex analysis in several variables}. Vol. 7. Elsevier, 1973.

\bibitem{Kodaira86} K. Kodaira, {\it Complex manifolds and deformation of complex structures}, Grundlehren der Mathematischen Wissenschaften, Springer, Berlin, Vol. 283, 1986.

\bibitem{KSp} K. Kodaira and D. C. Spencer, {\it On deformations of complex analytic structures, III. Stability theorems for complex structures}, Ann. Math. {\bf 71} (1960), 43--76.

\bibitem{Komatsu82} G. Komatsu, {\it Hadamard's variational formula for the Bergman kernel}, Proc.
     Japan Acad. Ser. A. Math. Sci. {\bf 58} (1982), 345--348.
     
\bibitem{LS14} L. Lempert and R. Sz\H{o}ke, {\it Direct Images, Fields of Hilbert Spaces, and Geometric Quantization}, Communications in Mathematical Physics {\bf 327} (2014), 49--99.

\bibitem{LiuYang13} K. Liu and X. Yang, {\it Curvatures of direct image sheaves of vector bundles and applications},  J. Diff. Geom. {\bf 98} (2014), 117--145.

\bibitem{Liurenshan} R. S. Liu, {\it A property of the graph of a holomorphic motion (In Chinese)},  Journal of Fudan University (Natural Science), {\bf 47} (2008), 172--176.

\bibitem{Mabuchi86} T. Mabuchi, {\it K-energy maps integrating Futaki invariants}, Tohoku Math. J. {\bf 38} (1986), 575--593.

\bibitem{Maitani84} F. Maitani, {\it Variations of meromorphic differentials under quasiconformal deformations}, J. Math. Kyoto Univ. {\bf 24} (1984), 49--66.

\bibitem{MY04} F. Maitani, H. Yamaguchi, {\it Variation of Bergman metrics on Riemann surfaces}, Math. Ann. {\bf 330} (2004), 477--489.

\bibitem{MSS83} R. M\`{a}\~{n}\'{e}, P. Sad, and D. Sullivan, {\it On the dynamics of rational maps}, Ann. \`{E}c. Norm. Sup. {\bf 96} (1983), 193--217.

\bibitem{MT07} C. Mourougane, S. Takayama, {\it Hodge metrics and positivity of direct images}, J. Reine Angew.
Math. {\bf 606} (2007), 167--178.

\bibitem{MT08} C. Mourougane, S. Takayama, {\it Hodge metrics and the curvature of higher direct images}, Ann. Sci. \'{E}c. Norm. Sup\'{e}r. {\bf 41} (2008), 905--924.

\bibitem{OT87} T. Ohsawa and K. Takegoshi, {\it On the extension of $L^2$ holomorphic functions}, Math. Z. {\bf 195} (1987), 197--204.

\bibitem{Prekopa73} A. Prekopa, {\it On logarithmic concave measures and functions}, Acad. Sci. Math. (Szeged) {\bf 34} (1973), 335--343.

\bibitem{Semmes88} S. Semmes, {\it Interpolation of Banach spaces, differential geometry and differential equations}. Revista matemática iberoamericana, {\bf 4} (1988), 155--176.

\bibitem{Sch12} G. Schumacher, {\it Positivity of relative canonical bundles and applications}, Invent. Math. {\bf 190} (2012), 1--56.

\bibitem{Siu76} Y. T. Siu, {\it Every Stein subvariety admits a Stein neighborhood}, Invent. Math. {\bf 38} (1976), 89--100.

\bibitem{Siu86} Y. T. Siu, {\it Curvature of the Weil-Petersson metric in the moduli space of compact K\"ahler-Einstein manifolds of negative first Chern class}. In: Stoll, H. W. (ed.) Contributions to Several Complex Variables, Proc. Conf. Complex Analysis, Notre Dame/Indiana, 1984. Aspects Math., vol. E9, pp. 261--298 (1986).

\bibitem{Siu02} Y. T. Siu, {\it Extension of twisted pluricanonical sections with plurisubharmonic weight and invariance of semipositively twisted plurigenera for manifolds not necessarily of general type}, in Complex Geometry (G\"ottingen, 2000), 223--277, Springer, Berlin, 2002.

\bibitem{Sl91} Z. Slodkowski, {\it Holomorphic motions and polynomial hulls}, Proc. Amer. Math.
Soc. 111 (1991), 347--355.

\bibitem{ST86} D.-P. Sullivan and W.-P. Thurston, {\it Extending holomorphic motions}, Acta Math. 157 (1986), 243--257.

\bibitem{Tran} D. V. Tran, {\it Direct images as Hilbert fields and their curvatures}, PhD thesis.

\bibitem{Tsuji05} H. Tsuji, {\it Variation of Bergman kernels of adjoint line bundles}, arXiv:0511342 [math.CV].

\bibitem{Wang13} X. Wang, {\it Variation of the Bergman kernels under deformations of complex structures}, arXiv:1307.5660 [math.CV].

\end{thebibliography}
\end{document}